\newtheorem{thm}{Theorem}
\newtheorem{definition}{Definition}
\newtheorem{lemma}{Lemma}[section]
\newtheorem{prop}{Proposition}[section]
\newtheorem{rmk}{Remark}[section]
\newcommand{\cP}{{\mathcal{P}}}
\newcommand{\cU}{{\mathcal{U}}}
\newcommand{\E}{{\mathbf{E}}}
\newcommand{\cf}{{\mathcal{F}}}
\newcommand{\V}{V_*}
\newcommand{\f}{\dot f_*}
\title[Exponential Fermi Acceleration]{Exponential Fermi Acceleration in a Switching Billiard}
\date{\today}
\begin{document}

\author{Davit Karagulyan$^*$, Jing Zhou$^\dagger$}
\thanks{$^*$ Department of Mathematics, Royal Institute of Technology, Sweden. Email: davitk@kth.se.\\ $^\dagger$ Department of Mathematics, Penn State University, USA. Email: jingzhou@psu.edu\\
The authors gratefully acknowledge the enlightening discussion with Dmitry Dolgopyat, which has greatly improved the structure of the proof in the paper.}

\maketitle

\begin{abstract}
    In this paper we show an infinite measure set of exponentially escaping orbits for a resonant Fermi accelerator, which is realised as a square billiard with a periodically oscillating platform. We use normal forms to describe how the energy changes in a period and we employ techniques for hyperbolic systems with singularities to show the exponential drift of these normal forms on a divided time-energy phase.
\end{abstract}

\tableofcontents

\section{Introduction}
The Fermi acceleration model has been extensively investigated numerically and theoretically by many physicists and mathematicians in the past decades (c.f. \cite{Dol08} \cite{GRKT12} \cite{LLC80} for a survey on the subject). In 1949 Fermi \cite{Fermi49} proposed a model where charged particles travel in moving magnetic field, as an attempt to explain for the existence of high-energy particles in outer universe. Later in 1961 Ulam \cite{Ulam61} extracted the effective dynamics in this situation, i.e. a ball bounces between a fixed wall and a periodically oscillating wall, and he conjectured the existence of \emph{escaping orbits} (those whose energy grow to infinity in time) based on his numerical experiment with piecewise linear models. Since then, numerous efforts have been made towards locating escaping orbits as well as \emph{bounded} (those whose energy remain bounded) and \emph{oscillatory} ones (those whose energy have finite $\liminf$ but infinite $\limsup$) in the Fermi acceleration models and their variations.

For sufficiently smooth wall motions, KAM theory has implied that the prevalence of invariant curves forces the energy of every orbit to be bounded \cite{Pu83} \cite{LaLe91} \cite{Pu94}. Zharnitsky \cite{Zhar2000} generalized the situation to KAM quasi-periodic motions with Diophantine frequencies. Kunze and Ortega \cite{kuOr20} showed that the set of escaping orbits has zero measure for wall motions with rationally independent frequencies. 

The Fermi acceleration exhibits richer dynamical phenomena if we allow singularities in the wall motions. Zharnitsky \cite{Zhar98} found linearly escaping orbits in Ulam's piecewise linear models. De Simoi and Dolgopyat \cite{deSD12} showed in a piecewise smooth model with one singularity that the system changes from elliptic behavior to hyperbolic behavior as a critical system parameter varies: in the elliptic regime the set of escaping orbits has infinite measure while in the hyperbolic regime it has zero measure but full Hausdorff dimension. 

When the fixed wall is replaced by background potential, escaping orbits are shown to have infinite measure for periodic analytic wall motions with gravity \cite{Pu77}. However, if we allow one singularity in the gravity model and impose hyperbolicity assumptions, the escaping orbits constitute a null set and they co-exist with bounded orbits at arbitrarily high energy levels \cite{Zhou21}. Non-constant background potential models have also been examined and we refer to \cite{DS09} \cite{Dol08potential} \cite{Or02} for examples.

The billiard model is a natural realization of the Fermi acceleration model in two dimensions. For instance, unbounded orbits have been located in a billiard model with smoothly breathing boundaries \cite{GT08ham} \cite{KMKC95} \cite{LKSK04}. Exponentially escaping orbits have been constructed in a non-autonomous billiard model \cite{GT08nonauto} and are conjectured to be generic in an oscillating mushroom model \cite{GRKT14}. However in general it remains challenging to construct a Fermi accelerator with a full measure or even positive measure set of exponentially accelerating orbits. 

In this paper, we study a unit square billiard with a vertically oscillatory slit of infinite mass on the left. A ball bounces elastically on the boundaries of the square and the slit. The slit has length $\lambda$ and its motion (height) is described by $f(t)$ which is periodic in two $f(t+2)=f(t)$ and piecewise $C^3$ with a jump discontinuity of $\dot f(t)$ at $t=2n$ ($n\in\mathbb{Z}$) (c.f. Figure \ref{fig:slit}). The horizontal speed of the ball is 1 so the horizontal motion of the ball is in 1:1 resonance to the movement of the slit.

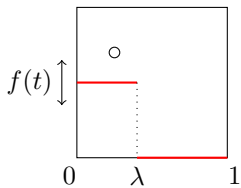
\begin{figure}[h]
    \label{fig:slit}
    \centering
       \begin{tikzpicture}
          \draw (0,0) rectangle (2,2);
          \draw[red,thick] (0.8,1) -- (0,1)
                           (0.8,0) -- (2,0);
          \draw[<->] (-0.2,0.7) -- (-0.2,1.3);
          \draw (0.5,1.4) circle (2pt);
          \draw (-0.2,1) node[anchor=east]{$f(t)$}
                (-0.1,0) node[anchor=north]{0}
                (2.1,0) node[anchor=north]{1};
          \draw[dotted] (0.8,1) -- (0.8,0) node[anchor=north]{$\lambda$};
       \end{tikzpicture}
    \caption{A Switching Billiard}
\end{figure}

We acquire this model from \cite{STR} where Shah, Turaev and Rom-Kedar proposed an ideally probabilistic approximation that the probability of jumping up or down is proportional to the lengths of openings which well matched the numerically observed exponential growth rate in the non-resonant case and the escape rate in the resonant case was numerically observed to be significantly higher than that in the non-resonant case.

This paper addresses an open question from \cite{Zhou20} where the non-switching case with 1:1 resonance was discussed. In \cite{Zhou20} when the relative positions of two oscillating slits 
are different at two critical jumps in a period and a trapping region, either the upper or the lower chamber of the table, is therefore created so that the ball can never escape to the other chamber once it enters the trapping chamber and in this situation almost every high energy orbit eventually gains energy exponentially fast. However, in the current setting when there is only one slit, trapping regions do not exist and hence the old mechanism of exponential acceleration no long works.

Let $x_0$ denote the starting horizontal position of the ball. We assume without loss of generality that the ball starts from the left part of the table, i.e. $0< x_0<\lambda$. We name as the \emph{interacting region} the slit on the left and the floor on the right (c.f. the red region in Figure \ref{fig:slit}). We record the time $t$ and the vertical velocity $v$ of the ball immediately after the collision at the interacting region. Let $F$ denote the collision map which sends a collision $(t,v)$ to the next $(\bar{t},\bar{v})$. We exclude from our discussion the collisions at the edge of the slit, which constitute a null set from the $(t,v)$ phase space. It takes time $\Delta t=2$ for the ball to finish a complete revolution. We denote by $\cf$ the dynamics of a complete revolution and $P$ a half revolution, whose definitions are made precise in Section 3. We are interested in the size of the escaping orbits in this billiard system and we show that for a large collections of slit motions $f$ there exists an infinite measure set of exponentially escaping orbits.


\section{Main Result}
We recall that $x_0$ is the starting horizontal position of the ball and $\lambda$ is the length of the slit. We may assume without loss of generality that we start from the left chamber, i.e. $0<x_0<\lambda$. Since the horizontal motion of the ball is in 1:1 resonance with the vertical oscillation of the slit, in a period $\Delta t=2$ the ball switches from the left chamber to the right at the moment $t_1^*=\lambda-x_0$ and later from right to left at $t_2^*=2-\lambda-x_0$. We denote by $f_i:=f(t_i^*)$ and $\dot f_i :=\dot f(t_i^*)$ ($i=1,2$).

\begin{thm}[Main Theorem]\label{expAcc}
Assume that $f(t)$ is periodic in two $f(t+2)=f(t)$ and piecewise $C^3$ with a jump discontinuity of $\dot f(t)$ at $t=2n$ ($n\in\mathbb{Z}$), and that $f_1 \neq f_2$. Let
$$
\mathcal E=(1-f_2)\log \frac{1-f_2}{1-f_1}+f_2 \log \frac{f_2}{f_1}.
$$
Assume also that for all $t$, $c \le f(t) \le 1-c$ for some constant $c\in(0,1/2)$. Then there exists $\f>0$ large so that if $|\dot f_1|,|\dot f_2|>\f$ then one can choose an energy threshold  $V_*=V_*(f_1, f_2,\dot f_1,\dot f_2)$ in such a way that there exist positive constants $\alpha$, $\beta, T_*$ and $\iota> 1$, depending on $f_1, f_2, \mathcal E$, so that for every $V_1,V_2$ and $T\ge T_*$ with $V_* \iota^T<V_1 \ll V_2$, we have
$$
\frac{|\{v_0 \in [V_1,V_2]: v_t \geq e^{\alpha t}v_0, \forall t \geq T \}|}{|V_2-V_1|}\geq 1-e^{-\beta T}. 
$$
\end{thm}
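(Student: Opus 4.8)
The plan is to analyze the complete‑revolution map $\cf$ in the high‑energy regime $v\gg V_*$, to extract from it a \emph{normal form} for the increment of $\log v$ over one revolution, to recognize the mean increment as the relative entropy $\mathcal E>0$, and then to treat the sequence of increments as an exponentially‑mixing hyperbolic process with positive drift, to which a large‑deviation estimate applies. \textbf{Step 1 (normal form and the drift $\mathcal E$).} In one revolution the ball spends horizontal time $2\lambda$ inside the left chamber, where its vertical motion is a bounce in a slot of width $\ell(t)$ that varies slowly relative to the bounce frequency $\sim v/\ell$: here $\ell(t)=1-f(t)$ when the ball is above the slit (the \emph{upper} sub‑chamber) and $\ell(t)=f(t)$ when it is below (the \emph{lower} sub‑chamber). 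The adiabatic invariant of such a bounce is the action, proportional to $v\,\ell$, so $v(t)\,\ell(t)$ is conserved up to an error $o(1)$ as $v\to\infty$ --- the error coming from the finite bounce frequency, from the corner of $\ell$ at the jump time $t\in 2\mathbb Z$, and from endpoint corrections at the crossings $x=\lambda$, which occur at the fixed phases $t_1^*$ and $t_2^*$. Since the ball enters the left chamber at phase $t_2^*$ and leaves at phase $t_1^*$, and $f$ is continuous and $2$‑periodic, conservation of $v\ell$ between these instants gives, writing $v'$ for the velocity one revolution later, the normal form
$$
\log v'-\log v=\begin{cases}\log\dfrac{1-f_2}{1-f_1}+o(1),&\text{upper sub-chamber},\\[2mm]\log\dfrac{f_2}{f_1}+o(1),&\text{lower sub-chamber}.\end{cases}
$$
At high energy the height at which the ball re‑enters the left chamber (at phase $t_2^*$) is almost equidistributed on $(0,1)$, so it lands above the slit with probability $\approx 1-f_2$ and below with probability $\approx f_2$; the conditional mean of the increment is then exactly
$$
(1-f_2)\log\frac{1-f_2}{1-f_1}+f_2\log\frac{f_2}{f_1}=\mathcal E,
$$
the relative entropy of $(f_2,1-f_2)$ against $(f_1,1-f_1)$, which is strictly positive precisely when $f_1\ne f_2$. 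This pins down both the acceleration mechanism and the rate $\alpha\approx\mathcal E/2$.

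\textbf{Step 2 (hyperbolicity with singularities).} To make ``almost equidistributed'' quantitative I introduce a phase variable $\phi\in\mathbb T$ encoding the vertical bouncing phase at the crossing $x=\lambda$; the sub‑chamber visited in a revolution is determined by the position of $\phi$ relative to an $f$‑dependent threshold. Since the ball makes $\sim v$ vertical bounces between consecutive crossings, $\cf$ expands $\phi$ by a factor of order $v$ while $s=\log v$ is a slow variable changing by $O(1)$ per step: on the time--energy cylinder the return map is thus a fast--slow hyperbolic map whose singularity set comes from (i) grazing collisions at the slit edge, (ii) the jump of $\dot f$ at $t\in 2\mathbb Z$, and (iii) the threshold between the two sub‑chambers. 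The hypotheses $c\le f\le 1-c$ and $|\dot f_1|,|\dot f_2|>\f$ serve to keep the hyperbolicity uniform and the singularity set transversal and tame. One then verifies the standard package --- uniform hyperbolicity, bounded distortion, complexity/growth bounds on the singularity set --- and invokes the machinery for hyperbolic systems with singularities (exponential decay of correlations and a large‑deviation principle for Birkhoff sums) on this \emph{divided} time--energy phase space, the division being the partition into the two sub‑chamber branches together with its iterated refinements, which carries a Markov‑like structure.

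\textbf{Step 3 (large deviations and bootstrap).} By Step 2 the increments $\log v_{k+1}-\log v_k$ form, uniformly over orbits above the threshold $V_*=V_*(f_i,\dot f_i)$, an exponentially‑mixing process with mean $\mathcal E+o(1)$, so for any $\alpha<\mathcal E/2$ there is $\beta>0$ with $\mathbf P\bigl(\log v_{2n}-\log v_0<2\alpha n\bigr)\le e^{-\beta n}$, the factor $2$ converting revolutions to time. To upgrade this to a statement valid for all $t\ge T$, observe that a single‑revolution increment is bounded below by $-\log\iota$, where $\iota>1$ is the worst‑case deceleration factor $\max\{(1-f_1)/(1-f_2),\,f_1/f_2\}$ (one of these exceeds $1$ because $f_1\ne f_2$), so $v_n\ge v_0\,\iota^{-n}$ holds deterministically; hence choosing $v_0\in[V_1,V_2]$ with $V_1>V_*\iota^{T}$ keeps $v$ above $V_*$ --- and the normal form valid --- throughout the critical window of length $\sim T$. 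Summing the large‑deviation bound over the exponentially weighted times $t\ge T$ at which an orbit could first fall below the line $e^{\alpha t}v_0$ shows the set of bad $v_0$ has relative measure $\le e^{-\beta T}$ once $T\ge T_*$; on the good set the orbit stays on the acceleration line, $v$ keeps growing, the high‑energy hypotheses persist for all later times, and the bootstrap closes, yielding the conclusion for every $t\ge T$.

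\textbf{The main obstacle} is Step 2 together with the error control in Step 1: one must show the accumulated adiabatic and endpoint errors are not merely small but small enough --- and regular enough as functions on phase space --- to survive the large‑deviation analysis without eroding the $O(1)$ drift $\mathcal E$, and that the induced fast--slow map genuinely satisfies the hypotheses of the theory of hyperbolic maps with singularities, with all constants uniform as $v\to\infty$. The most delicate point is the coupling between the fast phase $\phi$ and the slow energy $s$: the averaging that produces $\mathcal E$ has to be carried out simultaneously with the hyperbolic estimates, not after them.
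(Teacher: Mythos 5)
Your proposal captures the paper's high-level architecture --- per-revolution normal forms for $\log v$, the drift $\mathcal E$ as a relative entropy, hyperbolicity on the divided time--energy cylinder, and a large-deviation argument, including the clever $\iota$-trick at the end --- but two of your steps would fail as written. The first concerns Step~2: you cannot simply ``invoke the machinery for hyperbolic systems with singularities (exponential decay of correlations and a large-deviation principle for Birkhoff sums).'' The return map here is genuinely non-autonomous: the phase expansion is by a factor $\sim v$ that grows along orbits, there is no invariant probability measure to center a Birkhoff sum, and, more seriously, the singularity set cuts unstable curves into pieces that may remain short for a long time, so that on short pieces the phase is \emph{not} equidistributed and the upper/lower route choice is not conditionally independent of the past. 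The paper therefore does not cite a black-box theorem; it rebuilds the probabilistic estimate by hand: a chain of growth lemmas controlling how long short unstable curves can remain short, a hitting-time sequence $\hat N_1<\hat N_2<\cdots$ of returns to ``long'' curves, a moment estimate of the form $\mathbf E_\gamma\bigl[e^{-\kappa\Delta}\bigr]\le\vartheta<1$ for the $\log$-energy increment $\Delta$ between consecutive hitting times, and then a martingale/Chebyshev argument. Your plan needs to be replaced by, or fleshed out into, something of this kind.

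The second gap is the circularity in your bootstrap. You argue that on the good set the orbit stays above $V_*$ so the normal form remains valid and the energy keeps growing, but the measure estimate that defines the good set is itself obtained from the normal form, which may already have failed on what will turn out to be the bad set: the map $P$ is not even defined (in normal-form coordinates) where $v<V_*$, so one cannot speak of its large deviations there. The paper cuts this knot with an auxiliary construction you omit: a \emph{modified system} $P_0$ that, below a level $V_0\gg V_*$, replaces the decelerating upper-route branches by the accelerating lower-route linearization. The modified $P_0$ is globally defined on $\{v\ge V_*\}$, uniformly hyperbolic, never falls below $V_*$, and coincides with $P$ above $V_0$. One first proves the large-deviation and moment estimates for $P_0$, and only then --- by precisely your $\iota$-argument --- shows that the good orbits of $P_0$ never dip below $V_0$, so that $P_0$ and $P$ agree on them. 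Without this device the moment estimate is ill-posed; with it, your Step~3 goes through.
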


Without loss of generality we can assume that $f_2>f_1$, so that the ball accelerates when it enters the lower chamber and decelerates when it enters the upper chamber (c.f. Section 3.2). The case when $f_2<f_1$ can be proven analogously.\\

The main tool we use to prove the exponential acceleration is the normal forms derived in Section 3, which describes how the energy of the ball will change in a period when it enters the upper chamber or the lower chamber. The time-energy phase for these normal forms is divided into the accelerating lower route and the decelerating upper route (c.f. Figure \ref{fig:dividedphase}) under the assumption $f_2>f_1$. Our strategy of proof proceeds as follow: we first show that under the assumptions in Theorem \ref{expAcc} our system enjoys strong uniform hyperbolicity so that ideally if we start with an expanding curve it would get stretched considerably and we can estimate the accelerating and decelerating portions in its image and repeat the process. However, since our phase plane is divided, a long expanding curve will be cut into many pieces some of which might remain tiny for a long time so that its images after long time will be highly fragmented and we cannot carry out a uniform analysis. We fix the problems by proving some growth lemmas which control the size of tiny curves and revive the analysis. The other technical difficulty is that the normal forms only hold above certain energy threshold $V_*$ and the analysis collapses when the energy of ball falls below it. We take care of this by first proving the result for a modified system whose energy never falls below the threshold and then we argue that a considerable portion of accelerating orbits still survive for the original system.\\

More precisely, the proof of the Main Theorem \ref{expAcc} is structured as follow. We first derive the normal forms in Section 3 which captures how the energy of the ball changes after one period. Then in Section 4 we show that our system is uniformly hyperbolic under the assumptions in Theorem \ref{expAcc} in the sense that the normal forms share an invariant unstable cone. In Section 5 we modify the definition of our system by pushing up the energy of the ball whenever it falls near the critical energy threshold $V_*$. Next in Section 6 we estimate the size and the growth rate of unstable curves. Then we prove for the modified system in Section 7 that the assumption $f_2 \neq f_1$ yields exponential acceleration in finite time for sufficiently large expanding curves (called ``long curves"). Finally in Section 8 we prove exponential acceleration for infinite time by using some large deviation and moment estimate arguments combined with the quantitative version of the growth lemma proven in Section 6.

\section{Preliminaries}
In this section we discuss the structure of our dynamics: the map $\cf$ of a complete revolution of the ball and the divided phase cylinder it sits on.

We denote the two singularity curves in the $(t,v)$-phase plane as $S_i=\{t=t_i^*\}$; these are the collisions occurring at the edge of the slit and the ball motion is not well-defined.

For $i=1,2$, let $R_i$ denote the singular strip bounded by $S_i$ and its image $f(S_i)$ respectively. We note that $R_1$ collects the collisions at the right floor immediately after the ball exits the left chamber and $R_2$ the first collisions at the slit after the ball enters the left chamber. We further subdivide $R_2$ into $R_2^+$ and $R_2^-$ for left upper and left lower chambers. We also denote the preimages as $\Tilde{R}_i=f^{-1}(R_i)$. In the following text we omit the subscript $i$ whenever it is clear from the context.

The behavior of the ball differs when it stays in one chamber from when it switches chambers. We first describe in Section \ref{adiabtic} how the energy of ball (not) changes when it stays in one chamber and we then describe in Section \ref{normal} how it changes when the ball jumps from one chamber to the other. There are two possible routes of the ball when it switches chambers in a period: the ball enters the upper left chamber and then return to the floor (\emph{the upper route}); or it enters the lower left chamber and then return to the floor (\emph{the lower route}). Our goal is to describe the first return map $\cf$ to the right floor $R_1$ for the two routes. The adiabatic coordinates and the normal forms in Section 4 from \cite{Zhou20} readily apply in our case if we think of the right slit in \cite{Zhou20} to be always on the floor: there exists the critical initial energy $V_*$ sufficiently large that we have the adiabatic coordinates and the normal forms in Section \ref{adiabtic} and Section \ref{normal} respectively.

\subsection{The Adiabatic Coordinates}\label{adiabtic}
In this section we describe the behavior of the ball when it stays in the left or right chamber. We tailor the adiabatic coordinates from \cite{Zhou20} for our case.

First we assume that the ball hits the slit from above and it does not enter or exit the left chamber in the next collision. We introduce $l(t)$ as the ``distance'' from the slit to the ceiling: 
\[ l(t) = 
   \begin{cases}
      1-f(t) & 0 \le t \le t_1^*, t_2^* \le t \le 2 \\
      1      & \hbox{otherwise}
   \end{cases}.
\]
We need the following normalising constant $\displaystyle \mathcal{L}_*= \int_0^2 \frac{ds}{l(s)^2}$. We introduce the notation $\psi=\mathcal{O}_s(v^{-n})$ as $v^n\psi$ is bounded and so are its derivatives up to $s$-order.

\begin{lemma}\label{uppercoord}
For $(t,v)\in\{0\le t < t_1^*, t_2^*<t \le 2,\}\backslash R_1 \cup R_2 \cup \tilde{R}_1 \cup \tilde{R}_2$ and $v\gg 1$, there exists an adiabatic coordinate $(\theta,I) = \Psi_U (t,v) \in \mathbb{R}/2\mathbb{Z} \times \mathbb{R}_+$ such that 
\[
\theta_{n+1} = \theta_n + \frac{2}{I_n} + \mathcal{O}_3\left( \frac{1}{I_n^4} \right), \ I_{n+1} = I_n + \mathcal{O}_3\left(\frac{1}{I_n^3}\right).
\]
In fact, $\displaystyle \theta =\theta(t) =\frac{2}{\mathcal{L}_*} \int_0^t \frac{ds}{l(s)^2} \mod 2$, $\displaystyle I = I(t,v) = \frac{\mathcal{L}_*}{2}\left(lv +l\dot{l} + \frac{l^2\ddot{l}}{3v}\right)$.
\end{lemma}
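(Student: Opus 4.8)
The plan is to reduce the lemma to a classical adiabatic-invariant computation for a one-dimensional ball bouncing in a slowly varying gap $l(t)$; this is essentially the calculation of Section~4 of \cite{Zhou20} with the right slit degenerated onto the floor.

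\emph{Step 1: the exact one-oscillation map.} On the domain in the statement the ball stays in a single chamber for at least one full vertical oscillation, so between two consecutive collisions with the oscillating slit (upper-left chamber) or with the floor (right chamber) it reflects exactly once off the fixed ceiling at height $1$. If the ball leaves the slit at time $t_n$ with post-collision vertical speed $v_n$, then $t_{n+1}$ is the first $t>t_n$ with $v_n(t-t_n)=l(t_n)+l(t)$, which says that the vertical free flight with one ceiling bounce returns to the slit graph $y=f(t)$ (in the right chamber this degenerates to $t_{n+1}=t_n+2/v_n$, $v_{n+1}=v_n$ exactly), and $v_{n+1}$ is read off from the elastic collision law at the moving slit, $v_{n+1}=v_n+2\dot f(t_{n+1})=v_n-2\dot l(t_{n+1})$. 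Since $v_n\gg1$ and $c\le f\le 1-c$ keeps $l$ bounded away from $0$ and $1$, the implicit function theorem applies off the singular strips $R_i\cup\tilde{R}_i$ and represents $t_{n+1}-t_n$ as a $C^3$ function of $(t_n,v_n)$ with leading term $2l(t_n)/v_n$.

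\emph{Step 2: third-order expansion and verification.} Using $f\in C^3$, I would iterate the flight equation to obtain $t_{n+1}-t_n=\tfrac{2l_n}{v_n}+\tfrac{2l_n\dot l_n}{v_n^2}+\mathcal{O}_3(v_n^{-3})$, feed it into the collision law and into $\int_{t_n}^{t_{n+1}}l(s)^{-2}\,ds$, and collect powers of $1/v_n$; because the relevant quantities stay in a compact range, each remainder is uniformly bounded together with the derivatives tracked by the $\mathcal{O}_3(\cdot)$ notation. One then checks by direct substitution that in the coordinates $\theta=\tfrac{2}{\mathcal{L}_*}\int_0^t l(s)^{-2}\,ds\bmod 2$ and $I=\tfrac{\mathcal{L}_*}{2}\bigl(lv+l\dot l+\tfrac{l^2\ddot l}{3v}\bigr)$ the map takes the stated form. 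The angle part is immediate: $\theta_{n+1}-\theta_n=\tfrac{2}{\mathcal{L}_*}\int_{t_n}^{t_{n+1}}l(s)^{-2}\,ds$, and expanding the integrand together with the flight time yields $\tfrac{4}{\mathcal{L}_* l_n v_n}(1-\dot l_n/v_n+\cdots)=2/I_n+\mathcal{O}_3(I_n^{-4})$, while $\int_0^2 l(s)^{-2}\,ds=\mathcal{L}_*$ forces $\theta$ to advance by $2$ over a full period. For the action part one inverts $I=\tfrac{\mathcal{L}_*}{2}lv+\cdots$ to write $v$ in terms of $I$, substitutes into the expansion of $l_{n+1}v_{n+1}+l_{n+1}\dot l_{n+1}+\cdots$, and finds that the $O(1)$ and $O(1/v)$ drifts cancel in succession, because $lv$ is the adiabatic invariant of the bouncing ball, $l\dot l$ is its first adiabatic correction, and $\tfrac{l^2\ddot l}{3v}$ its second; hence $I_{n+1}=I_n+\mathcal{O}_3(I_n^{-3})$.

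\emph{Main obstacle.} The real content is in the two successive cancellations of Step~2 and in their delicacy. For the angle, the correction coming from the non-constancy of $l$ inside $\int l^{-2}$ must exactly offset the $\dot l/v$ term produced by $2/I$. For the action, one must track how the flight time reacts to both the velocity $\dot l$ and the curvature $\ddot l$ of the moving wall through the flight equation---it is these contributions that pin down the coefficient $\tfrac13$ in $\tfrac{l^2\ddot l}{3v}$ and make $\Delta(lv+l\dot l)$ collapse to size $v^{-2}$ before the last correction absorbs it. The breakdown of the expansion near the boundary of the domain, where the flight time ceases to be comparable to $1/v$, is handled simply by excluding the strips $R_i\cup\tilde{R}_i$, exactly as in \cite{Zhou20}.
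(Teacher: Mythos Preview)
Your proposal is correct and is essentially the same approach the paper takes: the paper does not prove this lemma in-line but simply states that ``the adiabatic coordinates and the normal forms in Section~4 from \cite{Zhou20} readily apply in our case if we think of the right slit in \cite{Zhou20} to be always on the floor,'' and your sketch is precisely that computation (the implicit flight-time equation, the elastic reflection law $v_{n+1}=v_n-2\dot l(t_{n+1})$, and the successive cancellations that single out $lv+l\dot l+\tfrac{l^2\ddot l}{3v}$ as the third-order adiabatic invariant). In other words, you have written out what the paper defers to the reference; there is no methodological difference to discuss.
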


Similarly, we have an adiabatic coordinate when the ball hits the slit from below. We suppose that the ball does not enter or exit the left chamber at the next collision. We introduce $m(t)$ as the ``distance'' from the slit to the floor of the table
\[ m(t) = 
\begin{cases}
   - f(t) & 0 \le t \le t_1^*, t_2^* \le t \le 2 \\
     -1   & \hbox{otherwise}
\end{cases}.
\]
We need the following normalising constant $\displaystyle \mathcal{M}_*= \int_0^2 m(s)^{-2} ds$.

\begin{lemma}\label{lowercoord}
For $(t,v)\in\{0\le t < t_1^*, t_2^*<t \le 2\}\backslash R_1 \cup R_2 \cup \tilde{R}_1 \cup \tilde{R}_2$ and $v\ll -1$, there exists an adiabatic coordinate $(\zeta,J) = \Psi_L (t,v) \in \mathbb{R}/2\mathbb{Z} \times \mathbb{R}_+$ such that 
\[
\zeta_{n+1} = \zeta_n + \frac{2}{J_n} + \mathcal{O}_3\left( \frac{1}{J_n^4} \right), \ J_{n+1} = J_n + \mathcal{O}_3\left(\frac{1}{J_n^3}\right).
\]
In fact, $\displaystyle \zeta =\zeta(t) =\frac{2}{\mathcal{M}_*} \int_0^t \frac{ds}{m(s)^2} \mod 2$, $\displaystyle J = J(t,v) = \frac{\mathcal{M}_*}{2}\left(mv +m\dot{m} + \frac{m^2\ddot{m}}{3v}\right)$.
\end{lemma}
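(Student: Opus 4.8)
The plan is to obtain the lemma from Lemma \ref{uppercoord} by the reflection symmetry of the table across the line $y=1/2$, and to note that the substance of both lemmas is the adiabatic construction of \cite[\S4]{Zhou20} specialised to a single moving slit. First I would observe that the involution $\rho:(x,y)\mapsto(x,1-y)$ of the unit square carries the switching billiard driven by the slit height $f(t)$ to the one driven by $g(t):=1-f(t)$: it swaps floor and ceiling, sends the slit $\{y=f(t)\}$ to $\{y=1-f(t)\}$, and acts on the recorded phase variables by $(t,v)\mapsto(t,-v)$ (the collision time is unchanged, the vertical velocity reverses). Hence an excursion of the ball in the \emph{lower} left chamber of the $f$-billiard (hitting the slit from below, $v\ll-1$) is conjugated, slit-collision by slit-collision, to an excursion in the \emph{upper} left chamber of the $g$-billiard (hitting the slit from above, $v'=-v\gg1$); the switching times $t_1^*=\lambda-x_0$, $t_2^*=2-\lambda-x_0$ and hence the excised strips are unchanged, and $g=1-f$ still obeys all standing hypotheses (piecewise $C^3$, $2$-periodic, jump of $\dot g=-\dot f$ at even integers, $c\le g\le 1-c$).

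Next I would check that Lemma \ref{uppercoord} applied to the $g$-billiard produces exactly the coordinates claimed for the $f$-billiard. The ``gap to the ceiling'' function of the $g$-billiard is $l_g(t)=1-g(t)$, which equals $f(t)$ on $[0,t_1^*]\cup[t_2^*,2]$ and $1$ elsewhere, so $l_g\equiv -m$ identically; consequently $\mathcal L_*^{(g)}=\int_0^2 l_g^{-2}=\int_0^2 m^{-2}=\mathcal M_*$, the angle variable is $\theta^{(g)}(t)=\tfrac{2}{\mathcal M_*}\int_0^t m^{-2}\,ds=\zeta(t)$, and, inserting $v'=-v$ together with $l_g=-m,\ \dot l_g=-\dot m,\ \ddot l_g=-\ddot m$,
\[
I^{(g)}=\frac{\mathcal L_*^{(g)}}{2}\Bigl(l_gv'+l_g\dot l_g+\frac{l_g^2\ddot l_g}{3v'}\Bigr)=\frac{\mathcal M_*}{2}\Bigl(mv+m\dot m+\frac{m^2\ddot m}{3v}\Bigr)=J(t,v).
\]
So setting $\Psi_L:=\Psi_U^{(g)}\circ\rho$, the recurrence for $(\zeta,J)$ is literally the one Lemma \ref{uppercoord} furnishes for $(\theta^{(g)},I^{(g)})$, i.e. $\zeta_{n+1}=\zeta_n+2/J_n+\mathcal O_3(J_n^{-4})$ and $J_{n+1}=J_n+\mathcal O_3(J_n^{-3})$, which is the claim.

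The only genuine work thus lies in Lemma \ref{uppercoord} (equivalently in \cite[\S4]{Zhou20}). There I would argue as follows: within one chamber the ball makes a single straight-line flight to the fixed wall and back, so the slit-to-slit return map takes the form $|v|\,(\bar t-t)=h(t)+h(\bar t)$, $\bar v=v+2\dot f(\bar t)$, where $h$ is the relevant gap ($h=1-f$ in the upper chamber, $h=f=|m|$ in the lower); since the free-flight time is $O(|v|^{-1})$, expanding these relations in powers of $|v|^{-1}$ and demanding cancellation order by order pins down the correction terms of the adiabatic invariant, reproducing $J=\tfrac{\mathcal M_*}{2}(mv+m\dot m+m^2\ddot m/(3v))$, while the angle is then forced to satisfy $\zeta'=\tfrac{2}{\mathcal M_* m^2}$ (so that collisions equidistribute) with total increment $2$ over $[0,2]$, whence $\zeta\in\mathbb R/2\mathbb Z$. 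I expect the only delicate point to be the uniform control of three derivatives of this conjugacy encoded in the $\mathcal O_3$ symbols — precisely why $f$ is taken $C^3$ and why the statement is restricted away from $R_1\cup R_2\cup\tilde R_1\cup\tilde R_2$, so that on the relevant domain $f$ is honestly $C^3$ and the one-bounce excursion provably stays inside $\{x<\lambda\}$.
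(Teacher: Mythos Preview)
Your proposal is correct. The paper itself offers no proof beyond the word ``Similarly'' preceding the statement and the blanket reference to \cite[\S4]{Zhou20}; your reflection argument via $\rho:(x,y)\mapsto(x,1-y)$ makes that ``similarly'' precise and is the natural way to formalise it, so this is essentially the same approach made explicit.
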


Finally, it might appear awkward but let us present also the adiabatic coordinate when the ball interact with the floor on the right; not only for the sake of completeness do we write this but it will also make clear the presentation in the future sections.

\begin{lemma}\label{floorcoord}
For $(t,v)\in\{t_1^*<t<t_2^*\}\backslash R_1 \cup R_2 \cup \tilde{R}_1 \cup \tilde{R}_2$ and $v\gg 1$, there exists an adiabatic coordinate $(\theta,H) = \Psi_F (t,v) \in \mathbb{R}/2\mathbb{Z} \times \mathbb{R}_+$ such that 
\[
\theta_{n+1} = \theta_n + \frac{2}{I_n}, \ H_{n+1} = H_n.
\]
In fact, $\displaystyle \theta =\theta(t) =\frac{2}{\mathcal{L}_*} \int_0^t \frac{ds}{l(s)^2} \mod 2$, $\displaystyle H = H(t,v) = \mathcal{L}_*v/2$.
\end{lemma}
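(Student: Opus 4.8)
The plan is to exploit that, unlike the situations of Lemmas~\ref{uppercoord} and~\ref{lowercoord}, no averaging or normal-form reduction is needed here. For $t\in(t_1^*,t_2^*)$ the ball lies in the right chamber $x\in[\lambda,1]$, and there the only horizontal walls it can meet are the fixed floor $\{y=0\}$ and the fixed ceiling $\{y=1\}$; a collision with the vertical wall $\{x=1\}$ reverses only the horizontal component of the velocity. Hence the vertical motion decouples completely from the horizontal one and the vertical speed is an exact constant of the motion, so the return map to the interacting region — which in the right chamber is the succession of bounces off $\{y=0\}$, with the unrecorded ceiling bounce in between — is an exact shear; this is precisely why the statement carries no $\mathcal{O}_3$ remainder. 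I would then simply take $\Psi_F(t,v)=(\theta,H)$ defined by the two formulas in the statement, $\theta=\frac{2}{\mathcal{L}_*}\int_0^t l(s)^{-2}\,ds \bmod 2$ and $H=\mathcal{L}_* v/2$.

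The verification is a one-line computation. Immediately after a floor collision the ball has vertical speed $v_n>0$, rises a distance $1$ to the ceiling, bounces, and descends a distance $1$ back to the floor — independently of how many times it meets $\{x=1\}$ in between — so the flight time to the next recorded collision is $t_{n+1}-t_n=2/v_n$, while the elastic reflection off a wall at rest gives $v_{n+1}=v_n$, i.e.\ $H_{n+1}=\tfrac{\mathcal{L}_*}{2}v_{n+1}=\tfrac{\mathcal{L}_*}{2}v_n=H_n$. Since $l\equiv 1$ on $(t_1^*,t_2^*)$,
\[
\theta_{n+1}-\theta_n=\frac{2}{\mathcal{L}_*}\int_{t_n}^{t_{n+1}}\frac{ds}{l(s)^2}=\frac{2}{\mathcal{L}_*}\,(t_{n+1}-t_n)=\frac{4}{\mathcal{L}_* v_n}=\frac{2}{H_n},
\]
which is the asserted recursion for $\theta$. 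Finally one checks that $\Psi_F$ is a genuine chart onto $\mathbb{R}/2\mathbb{Z}\times\mathbb{R}_+$: its Jacobian is $\partial_t\theta\cdot\partial_v H=\frac{2}{\mathcal{L}_* l(t)^2}\cdot\frac{\mathcal{L}_*}{2}=l(t)^{-2}\neq 0$, the second coordinate $H=\mathcal{L}_* v/2$ is positive for $v\gg1$, and as $t$ runs over $[0,2]$ the quantity $\frac{2}{\mathcal{L}_*}\int_0^t l^{-2}$ runs monotonically over $[0,2]$, so $\theta$ is well defined modulo $2$.

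The only point needing care — and there is no genuine obstacle here — is the bookkeeping of which collisions are recorded (floor, not ceiling), together with the observation that on the complement of $R_1\cup R_2\cup\tilde R_1\cup\tilde R_2$ inside the window $(t_1^*,t_2^*)$ both the current and the next recorded collision are clean bounces off $\{y=0\}$, rather than transitional collisions near the slit edge $x=\lambda$; in particular $\tilde R_2$ is what removes the last floor collision before the ball re-enters the left chamber, for which the forward flight time would not be $2/v$. This is exactly the convention already used in Lemmas~\ref{uppercoord}–\ref{lowercoord}, and it is consistent with the reduction noted at the beginning of Section~3 that the present model is the two-slit model of \cite{Zhou20} with the right slit frozen on the floor, so that the adiabatic-coordinate construction of \cite[Section~4]{Zhou20} specializes to the exact shear computed above.
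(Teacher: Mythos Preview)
Your proof is correct. The paper does not give an explicit proof of this lemma; it simply remarks at the start of Section~3 that the adiabatic coordinates are tailored from \cite{Zhou20} by taking the right slit to sit on the floor, and states the three lemmas without further argument. Your direct computation is exactly the verification one would carry out for this degenerate case: since $l\equiv 1$ on $(t_1^*,t_2^*)$ the walls are static, the vertical speed is exactly conserved, the inter-collision time is exactly $2/v$, and so the recursion holds without any $\mathcal{O}_3$ remainder. You also correctly note (and silently fix) the typo $I_n$ for $H_n$ in the displayed recursion. So your approach and the paper's are the same in spirit; yours is just the explicit one-line check that the paper leaves to the reader via the reference to \cite{Zhou20}.
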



\subsection{The Normal Forms}\label{normal}
In this section we describe the behavior of the ball when it jumps from one chamber to the other. We tailor the normal forms from \cite{Zhou20} for our case and all the normal forms are valid for sufficiently large initial energies $V_*$.

We decompose the Poincar\'e map $P$ on $R_1$ (the first return map to the right floor) into two maps between the two singular strips $R_1, R_2$. We recall that the ball may follow the upper or lower route in a period.

\subsubsection{The Upper Route}
In the upper route, $P=P^U$ decomposes into the following two maps $P^{U}_{21}\circ P^{U}_{12}$: $P^{U}_{12}$ describes the behavior of the ball from $R_1$ (the right foor) to $R_2^+$ (the upper left chamber) whereas $P^{U}_{21}$ from $R_2^+$ back to $R_1$.

We introduce a new pair of variables $(\sigma, \mathcal{H})$ defined on $R_1$:
\[
\sigma = H(\theta - \theta_1^*),\  
\mathcal{H}=H/\mathcal{L}_*
\]
where $\displaystyle \theta_1^* = \frac{2}{\mathcal{L}_*}\int_0^{t_1^*} \frac{ds}{l(s)^2}$, and a new pair of variables $(\tau, \mathcal{I})$ on $R_2^+$
\[
\tau = I(\theta - \theta_2^*),\  \mathcal{I}=\frac{I}{\mathcal{L}_*}
\]
where $\displaystyle \theta_2^* = \frac{2}{\mathcal{L}_*}\int_0^{t_2^*} \frac{ds}{l(s)^2}$.

We need the following constants $(i=1,2)$:
\begin{align*}
&\Delta_i = \frac{1}{2} \frac{l(t_i^*+)}{l(t_i^*-)} \left( l(t_i^*-)\dot{l}(t_i^*+) - l(t_i^*+)\dot{l}(t_i^*-) \right),\\
&\Delta'_i = \frac{1}{8} l(t_i^*+)^2 \left( l(t_i^*-)\ddot{l}(t_i^*+) - l(t_i^*+)\ddot{l}(t_i^*-) \right),\\
&\Delta''_i = \frac{1}{24} l(t_i^*-)l(t_i^*+) \left( l(t_i^*-)\ddot{l}(t_i^*+) - l(t_i^*+)\ddot{l}(t_i^*-) \right).
\end{align*}
We write in brevity $f_1=f(t_1^*)$, etc. throughout the following texts.

\begin{prop}[Upper Route]\label{uppernormal}
Suppose that $(\sigma, \mathcal{H}) \in R_1$ and $\mathcal{H}>V_*$, and that 
$$f_2 \lesssim \{ \mathcal{L}_*\mathcal{H}(\theta_2^* - \theta_1^*) -\sigma \}_2 \lesssim 2-f_2,$$ 
where $\lesssim$ means the inequality holds up to an error of order $\mathcal{O}(\frac{1}{\mathcal{H}})$, 
and $\{\bullet\}_2 = \bullet \mod 2$. Then the Poincar\'e map $P^{U}_{12}: R_1 \to R_2^+$ is given by $(\tau,\mathcal{I}) = G^{U}_{12}(\sigma, \mathcal{H}) + H^{U}_{12}(\sigma, \mathcal{H}) + \mathcal{O}_3(\mathcal{H}^{-2})$ where 
\[ 
G^{U}_{12}(\sigma, \mathcal{H}) =\left( -\frac{1}{1-f_2} \{ \mathcal{L}_*\mathcal{H}(\theta_2^* - \theta_1^*) -\sigma \}_2 + \frac{2-f_2}{1-f_2}, (1-f_2) \mathcal{H} + \Delta_2 (\tau -1) \right)
\]
and
\[ 
H_{UU}^{12}(\sigma, \mathcal{H}) =\left(0, \Delta'_2 (\tau -1)^2/\mathcal{H} + \Delta''_2/\mathcal{H} \right).
\]
Similarly, suppose that $(\tau,\mathcal{I}) \in R_2^+$, $\mathcal{I} > V_*$. 
Then the Poincar\'e map $P^{U}_{21}: R_2^+ \to R_1$ is given by 
$$(\sigma,\mathcal{H}) = G^{U}_{21}(\tau,\mathcal{I}) + H^{U}_{21}(\tau,\mathcal{I}) + \mathcal{O}_3(\mathcal{I}^{-2})$$ 
where 
\[ 
G^{U}_{21}(\tau,\mathcal{I}) =\left( -(1-f_1) \{ \mathcal{L}_*\mathcal{I}(2+\theta_1^* - \theta_2^*) -\tau \}_2 + 2-f_1, \frac{1}{1-f_1} \mathcal{I} + \Delta_1 (\sigma -1) \right)
\] 
\[ 
H^{U}_{21}(\tau,\mathcal{I}) =\left(0, \Delta'_1 (\sigma -1)^2/\mathcal{I} + \Delta''_1/\mathcal{I} \right).
\]
\end{prop}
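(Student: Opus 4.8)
\emph{Proof strategy.} The proposition is an adiabatic normal form for a half‑revolution, and the plan is to specialize the derivation of Section~4 of \cite{Zhou20} to the present degenerate configuration, in which the ``right slit'' of \cite{Zhou20} is collapsed onto the floor (height function identically $0$); the three adiabatic charts of Lemmas~\ref{uppercoord}, \ref{lowercoord}, \ref{floorcoord} are exactly the resulting specializations. Concretely, I would decompose $P^{U}_{12}\colon R_1\to R_2^+$ into three stages. First, free motion on the right floor from the $R_1$‑collision (at some time $t\in(t_1^*,t_2^*)$) up to the last floor collision before the ball reaches $x=\lambda$ at time $t_2^*$: here Lemma~\ref{floorcoord} applies verbatim, so $\mathcal H=\mathcal L_* v/2$ is an \emph{exact} invariant and $\theta$ rotates rigidly by $2/I$ per collision, and this stage produces no error. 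Second, the switching event at $t_2^*$: the ball crosses $x=\lambda$ with vertical velocity continuous and at some height $y$, after which the confining profile changes from the pair $(0,1)$ to the pair $(f(t),1)$, so $l$ acquires a corner with $l(t_2^*-)=1$, $l(t_2^*+)=1-f_2$, and one must compute the passage map in $(\theta,I)$. Third, free motion inside the upper‑left chamber up to the first slit collision, which lands in $R_2^+$ and is governed by Lemma~\ref{uppercoord}. The companion map $P^{U}_{21}\colon R_2^+\to R_1$ is handled identically, with the switching now at $t_1^*$ and the two profiles interchanged ($l(t_1^*-)=1-f_1$, $l(t_1^*+)=1$), which is why $G^{U}_{21}$ carries the factor $\tfrac{1}{1-f_1}$ and the constants $\Delta_1,\Delta_1',\Delta_1''$.

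The core computation is the second stage. I would set up a local model: parametrize the last few floor bounces before $t_2^*$ by the phase of the triangle‑wave vertical motion, Taylor‑expand $f$ (equivalently $l$) to third order on each side of $t_2^*$, and propagate $v$ and the crossing height $y$ through $x=\lambda$. Matching the two one‑sided adiabatic invariants $I=\tfrac{\mathcal L_*}{2}\bigl(lv+l\dot l+\tfrac{l^2\ddot l}{3v}\bigr)$ then produces the multiplicative factor $1-f_2$ at leading order (from $l(t_2^*-)=1$, $l(t_2^*+)=1-f_2$), the first‑order‑in‑$1/\mathcal H$ correction $\Delta_2(\tau-1)$ from the mismatch of the term $l\dot l$ across the corner, and the $1/\mathcal H$ terms $\Delta_2'(\tau-1)^2+\Delta_2''$ from the mismatch of $l^2\ddot l$, with a remainder that is $\mathcal O_3(\mathcal H^{-2})$. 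The blown‑up coordinates $\sigma=H(\theta-\theta_1^*)$ on $R_1$ and $\tau=I(\theta-\theta_2^*)$ on $R_2^+$ are the natural ones precisely because near a switching the ball makes $\sim\mathcal H$ collisions spread over an $O(\mathcal H^{-1})$ interval of $\theta$, so $\tau$ is an $O(1)$ quantity on which the crossing height depends affinely; in these coordinates the leading part of the passage map is the affine map $G^{U}_{12}$, the slope $-\tfrac{1}{1-f_2}$ of its first component recording the ratio between the rate at which $\tau$ advances inside the chamber and the rate at which $\sigma$ advances on the floor, with the sign coming from the folding of the triangle wave at the corner.

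For the error bookkeeping, both the first and third stages consist of $O(\mathcal H)$ iterations. On the floor there is no drift in $\mathcal H$ and no error in $\theta$ at all. Inside the chamber Lemma~\ref{uppercoord} contributes a per‑step error $\mathcal O_3(I^{-3})$ in $I$, hence $\mathcal O_3(\mathcal H^{-2})$ after $O(\mathcal H)$ steps, and $\mathcal O_3(I^{-4})$ per step in $\theta$, hence $\mathcal O_3(\mathcal H^{-3})$ in $\theta$ and therefore $\mathcal O_3(\mathcal H^{-2})$ in $\tau=I(\theta-\theta_2^*)$; together with the $\mathcal O_3(\mathcal H^{-2})$ remainder of the switching map this gives the claimed $\mathcal O_3(\mathcal H^{-2})$. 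The hypothesis $f_2\lesssim\{\mathcal L_*\mathcal H(\theta_2^*-\theta_1^*)-\sigma\}_2\lesssim 2-f_2$ is then identified with the geometric condition that the ball's height at the crossing time $t_2^*$ lies in $[f_2,1]$ up to an $O(\mathcal H^{-1})$ discretization error, so that the ball genuinely enters the \emph{upper}‑left chamber: one evaluates $\mathcal L_*\mathcal H(\theta_2^*-\theta_1^*)-\sigma=H(\theta_2^*-\theta)=v\!\int_{t}^{t_2^*}l(s)^{-2}\,ds=v(t_2^*-t)$ (using $l\equiv 1$ on the floor and $t$ the time of the $R_1$‑collision) and reads off the triangle‑wave position; the two complementary ranges of $\{\cdot\}_2$ correspond to the lower route and to collisions discarded near the switching (those in $R_i,\tilde R_i$, $i=1,2$).

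The step I expect to be the main obstacle is the switching analysis of the second stage: carrying out the local billiard computation across the corner of $l$ carefully enough to isolate the constants $\Delta_2,\Delta_2',\Delta_2''$ and to control the remainder in $C^3$, uniformly for $\mathcal H>V_*$ and uniformly in the phase variable. The delicate points are the treatment of the last collision before and the first collision after the crossing — precisely the ones removed with the strips $R_i,\tilde R_i$ — and checking that the piecewise‑$C^3$ regularity of $f$ away from $t\in 2\mathbb Z$ is exactly what the third‑order Taylor expansions and the $\mathcal O_3$‑bookkeeping require.
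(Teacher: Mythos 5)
Your proposal takes essentially the same approach as the paper: the paper proves Proposition~\ref{uppernormal} simply by citing Section~4 of \cite{Zhou20} and observing that the normal forms there specialize verbatim to the present setting once the right slit is taken to coincide with the floor. Your three‑stage decomposition (floor dynamics via Lemma~\ref{floorcoord}, the corner‑crossing at $t_2^*$, chamber dynamics via Lemma~\ref{uppercoord}), the identification of the origin of the constants $\Delta_2,\Delta_2',\Delta_2''$ from the jump in $l$ and its derivatives at $t_2^*$, the error bookkeeping over $O(\mathcal H)$ steps, and the geometric reading of the admissibility condition $f_2\lesssim\{\cdot\}_2\lesssim 2-f_2$ are a correct and more explicit account of what that citation contains.
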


\subsubsection{The Lower Route}
In the lower route, depending on whether the ball travels the long trajectory or the short one when exiting or enter the lower left chamber (c.f. Figure \ref{fig:lowerroute}), $P=P^L$ decomposes into four possible combinations $P^{Ll}_{21}\circ P^{Ll}_{12}$, $P^{Ll}_{21}\circ P^{Ls}_{12}$, $P^{Ls}_{21}\circ P^{Ll}_{12}$ and $P^{Ls}_{21}\circ P^{Ls}_{12}$: $P^{Ll/Ls}_{12}$ describes the long/short route entering from $R_1$ (the right floor) to $R_2^-$ (the lower left chamber) whereas $P^{Ll/Ls}_{21}$ describes the long/short route exiting from $R_2^-$ back to $R_1$.

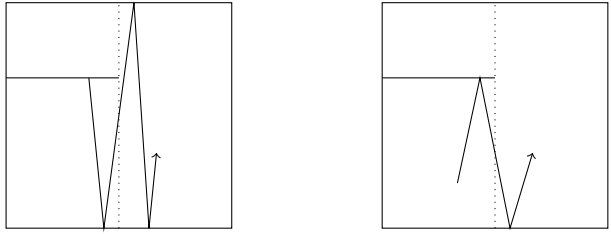
\begin{figure}[h!]
  \centering
     \begin{tikzpicture}
        \draw (0,0) rectangle (3,3)   (5,0) rectangle (8,3);
        \draw[dotted] (1.5,0)--(1.5,3) (6.5,0)--(6.5,3);
        \draw (0,2)--(1.5,2)   (5,2)--(6.5,2);
        \draw[->] (1.1,2)--(1.3,0)--(1.7,3)--(1.9,0)--(2,1);
        \draw[->] (6,0.6)--(6.3,2)--(6.7,0)--(7,1);
     \end{tikzpicture}
  \caption{Exiting the lower left chamber}\label{fig:lowerroute}
  \floatfoot{Long Exit on the left v.s. Short Exit on the right.}
\end{figure}

We introduce a new pair of variables $(\rho,\mathcal{J})$ on the lower singular strip $R_2^-$, which is the counterpart of $(\tau,\mathcal{I})$ on $R_2^+$ as follows 
\[ 
\rho = J(\zeta - \zeta_2^*), \ 
\mathcal{J}=J/\mathcal{M}_*
\]
where $\displaystyle \zeta_i^*=\frac{2}{M_*}\int_0^{t_i^*} \frac{ds}{m(s)^2}$.

First we describe the long/short entry from the right floor to the left lower chamber.

We need the following constants for the long entry:
\begin{align*}
   &\kappa_l= \frac{1}{2}m_+\dot{m}_+\\
   &\kappa_l'= \frac{1}{24}m_+^2\ddot{m}_+\\
   &\kappa_li''= \frac{1}{8}m_+^2\ddot{m}_+
\end{align*}
where all functions above take values at the moment $t=t_2^*$.

\begin{prop}[Long Entry]\label{longentry}
Assume that $(\sigma,\mathcal{H}) \in R_1^+$ with $\mathcal{H} > V_*$ and that 
$$\{ \mathcal{L}_*\mathcal{H}(\theta_2^* - \theta_1^*) -\sigma \}_2 \gtrsim 2-f_2.$$
Then the Poincar\'e map $P^{Ll}_{12}:R_1 \to R_2^-$ is given by 
$$(\rho,\mathcal{J})=G^{Ll}_{12}(\sigma,\mathcal{H}) + H^{Ll }_{12}(\sigma,\mathcal{H}) + \mathcal{O}_3(\mathcal{H}^{-2})$$ where
\[
G^{Ll}_{12}(\sigma,\mathcal{H})= \left(-\frac{1}{f_2}\{ \mathcal{L}_*\mathcal{H}(\theta_2^* - \theta_1^*) -\sigma \}_2 + \frac{f_2+2}{f_2}, f_2\mathcal{H} + \kappa_l(\rho-1)\right)
\]
and
\[
H^{Ll}_{12}(\sigma,\mathcal{H})=\left(0, \kappa_l'\frac{\rho-1}{H} -\kappa_l''\frac{(\rho-1)^3}{\mathcal{H}} \right).
\]
\end{prop}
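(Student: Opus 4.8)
The plan is to follow the scheme of \cite[Section 4]{Zhou20}, specialized to our geometry by regarding the right slit there as resting permanently on the floor, and to obtain the normal form by composing the maps governing the three successive regimes traversed by a long-entry trajectory. The key structural observation is that throughout the right region the ball reflects only off the static floor and the static ceiling, so Lemma \ref{floorcoord} applies there verbatim; the time dependence of the dynamics is concentrated in the single reflection off the underside of the moving slit, and it is from that collision that the constant $f_2$ (together with the normalization $\mathcal{M}_*$) and the parameters $\kappa_l,\kappa'_l,\kappa''_l$ will emerge.

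\emph{Step 1 (free flight on the right).} Starting from $(\sigma,\mathcal H)\in R_1$, pass to the floor chart $(\theta,H)=\Psi_F(t,v)$ of Lemma \ref{floorcoord}. Since $H$ is exactly conserved and $\theta$ advances by a fixed amount per reflection, after a number of reflections of order $\mathcal H$ the ball arrives at the horizontal coordinate $\lambda$ at time $t_2^*$, and its vertical position and velocity there are determined, up to an $\mathcal O(\mathcal H^{-1})$ error from the discreteness of the reflections, by the reduced phase $\{\mathcal L_*\mathcal H(\theta_2^*-\theta_1^*)-\sigma\}_2$; here $\theta_2^*-\theta_1^*=\tfrac{2}{\mathcal L_*}(t_2^*-t_1^*)$ because $l\equiv 1$ on the right. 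The hypothesis $\{\mathcal L_*\mathcal H(\theta_2^*-\theta_1^*)-\sigma\}_2\gtrsim 2-f_2$ is precisely the statement that at this instant the ball lies below the slit level $f_2$ and is moving downward, so that it enters the lower left chamber along the long trajectory, meaning it undergoes one reflection off the static floor before the first reflection off the slit.

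\emph{Step 2 (the switching transition).} The ball crosses $x=\lambda$ in free flight with no change of velocity, reflects once off the static floor, and then hits the underside of the moving slit; this final collision is the point recorded on $R_2^-$, with coordinates $(\rho,\mathcal J)$. Two effects must be expanded. First, the change of chart from $\Psi_F$ to the lower chart $\Psi_L$ of Lemma \ref{lowercoord}: since the adiabatic action is proportional to the speed on the right but to $|m(t)|=f(t)$ times the speed on the lower left, this rescaling produces the leading factor $f_2\mathcal H$ in $\mathcal J$ and the factor $1/f_2$ multiplying the reduced phase in $\rho$, which accounts for the form of $G^{Ll}_{12}$ apart from the $\kappa_l$ term. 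Second, the elastic reflection off the slit, whose height expands near the impact instant $t=t_2^*+\mathcal O(\mathcal H^{-1})$ as $f(t)=f_2+\dot f_2(t-t_2^*)+\tfrac12\ddot f(t_2^*)(t-t_2^*)^2+\cdots$ and whose velocity there is $\mathcal O(\dot f_2)$: reflecting off a wall moving with this velocity changes the ball's speed by $\mathcal O(\dot f_2)$, and carrying the expansion to third order in $(t-t_2^*)=\mathcal O(\mathcal H^{-1})$, written through $m=-f$, yields the $\kappa_l(\rho-1)$ correction inside $G^{Ll}_{12}$ (at order $\mathcal O(\dot f_2)$, hence bounded in $\mathcal H$ though possibly large in the parameters) and the $\kappa'_l(\rho-1)/\mathcal H$ and $\kappa''_l(\rho-1)^3/\mathcal H$ corrections forming $H^{Ll}_{12}$; the dependence on $\rho$ appears because $\rho$ fixes the precise phase of the incoming trajectory and hence the instantaneous slit velocity at impact. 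The residual contributions, namely the $l^2\ddot l/(3v)$-type terms hidden in the definitions of $H$ and $J$ and the truncation of the collision expansion, are of size $\mathcal O_3(\mathcal H^{-2})$.

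\emph{Step 3 (assembly and the main obstacle).} Composing Step 1, which is error-free since the right walls are static, with Step 2 and collecting remainders gives $(\rho,\mathcal J)=G^{Ll}_{12}(\sigma,\mathcal H)+H^{Ll}_{12}(\sigma,\mathcal H)+\mathcal O_3(\mathcal H^{-2})$ once terms are matched by order in $\mathcal H^{-1}$; the hypotheses $\mathcal H>V_*$ and $f\in C^3$ guarantee the validity of all the expansions and that the remainder, together with its first three derivatives, is $\mathcal O(\mathcal H^{-2})$. The delicate point is not the bookkeeping but making Step 2 rigorous: one must verify that ``the number of reflections'' and ``the first slit collision'' are well-defined and depend in a $C^3$ fashion on $(\sigma,\mathcal H)$ throughout the open region carved out by the hypothesis on the reduced phase — away from the singular curve $S_2$, and with edge-of-slit collisions excluded — so that the combinatorial alternatives (long versus short entry, or an off-by-one in the reflection count) do not intrude there and the composition is a genuine smooth map with exactly the stated leading part. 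Since \cite[Section 4]{Zhou20} carries out precisely this computation in the two-slit setting, the remaining work is the routine specialization obtained by setting the right slit's height identically to $0$, which turns its transition constants into $\kappa_l,\kappa'_l,\kappa''_l$ and its rescaling factor into $f_2$.
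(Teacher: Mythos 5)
Your proposal follows exactly the route the paper itself takes: Proposition~\ref{longentry} is stated without proof, with the justification given earlier in Section~3 that ``the adiabatic coordinates and the normal forms in Section~4 from~\cite{Zhou20} readily apply in our case if we think of the right slit in~\cite{Zhou20} to be always on the floor.'' You reproduce that specialization (right slit at height $0$), correctly identify the geometric meaning of the reduced-phase hypothesis as selecting the long entry (crossing downward, one floor bounce, then the slit), correctly trace where $f_2$, $\mathcal M_*$, and $\kappa_l,\kappa'_l,\kappa''_l$ arise from the chart change $\Psi_F\to\Psi_L$ and the third-order expansion of the slit collision, and you flag the genuine technical point (smooth dependence of the reflection count and of the first slit collision away from $S_2$) that~\cite{Zhou20} actually handles; this is simply a fuller articulation of what the paper leaves implicit.
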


We need the following constants to describe the short entry:
\begin{align*}
&\kappa_s'= \frac{1}{8}m_+^2\ddot{m}_+\\
&\kappa_s''= -\frac{1}{24}m_+m_+\ddot{m}_+
\end{align*}
where all functions above take values at the moment $t=t_2^*$.

\begin{prop}[Short Entry]\label{shortentry}
Assume that $(\sigma,\mathcal{H}) \in R_1$ with $\mathcal{H} > V_*$ and that 
$$f_2 \gtrsim \{ \mathcal{L}_*\mathcal{H}(\theta_2^* - \theta_1^*) -\sigma \}_2.$$ 
Then the Poincar\'e map $P^{Ls}_{12}:R_1 \to R_2^-$ is given by 
$$(\rho,\mathcal{J})=G^{Ls}_{12}(\sigma,\mathcal{H}) + H^{Ls}_{12}(\sigma,\mathcal{H}) + \mathcal{O}_3(\mathcal{H}^{-2})$$ where
\[
G^{Ls}_{12}(\sigma,\mathcal{H})=\left(-\frac{1}{f_2}\{ \mathcal{L}_*\mathcal{H}(\theta_2^* - \theta_1^*) -\sigma \}_2 + 1, f_2\mathcal{\mathcal{H}} + \kappa_l(\rho-1) \right)
\]
and
\[
H_{UL\rm II}^{12}(\sigma,\mathcal{H})=\left(0 , -\kappa_s'\frac{(\rho-1)^2}{\mathcal{H}} -\frac{\kappa_s''}{\mathcal{H}} \right).
\]
\end{prop}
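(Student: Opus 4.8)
The plan is to follow the ball from the collision $(\sigma,\cH)\in R_1$ on the right floor, through the (integrable) motion in the right chamber until its horizontal coordinate returns to $x=\lambda$ at time $t_2^*$, and then through the $\mathcal{O}(1)$ collisions in the time window of width $\mathcal{O}(\cH^{-1})$ during which the ball passes into the lower left chamber; at each stage one expresses the dynamics in the relevant adiabatic chart of Section~\ref{adiabtic} and matches the charts across the transition. This is the scheme of \cite[\S4]{Zhou20}, specialised to the present model by taking the ``right slit'' of \cite{Zhou20} to be identically the floor, so that the only nontrivial transitions are those at $t_1^*$ and $t_2^*$.

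First, while $x\in(\lambda,1]$ the ball bounces between the fixed floor $y=0$ and the fixed ceiling $y=1$, so by Lemma~\ref{floorcoord} the velocity $v$ (equivalently $H=\mathcal{L}_*v/2$) is exactly conserved and the phase advances by a fixed amount at each collision. Tracing this to the instant $x=\lambda$, one finds that the phase of the vertical bounce at that instant equals $\{\mathcal{L}_*\cH(\theta_2^*-\theta_1^*)-\sigma\}_2$ up to an error $\mathcal{O}(\cH^{-1})$ (note that $\mathcal{L}_*\cH=H$ and that $H\theta$ is invariant mod $2$ under collisions). The hypothesis $f_2\gtrsim\{\mathcal{L}_*\cH(\theta_2^*-\theta_1^*)-\sigma\}_2$ then places this phase in the sub-window in which the ball crosses $x=\lambda$ below the slit and moving upward, so that its next collision is with the slit from below with no intervening floor bounce inside $[0,\lambda]$: this is precisely the \emph{short} entry, as opposed to the long entry, in which the ball crosses $x=\lambda$ moving downward and first strikes the floor inside the slit region.

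Next I would carry out the local analysis on the window $|t-t_2^*|=\mathcal{O}(\cH^{-1})$, where the vertical velocity is continuous apart from the $\mathcal{O}(1)$ kicks $v\mapsto 2\dot f(t)-v$ at the slit and the finitely many relevant collisions are parametrised, to the required order, by one phase, which I record by the new coordinate $\rho$ on $R_2^-$. Evaluating $(\zeta,J)$ just after the ball enters the lower chamber from Lemma~\ref{lowercoord} with $m\equiv m(t_2^*)=-f_2$, the leading term gives $\mathcal{J}=J/\mathcal{M}_*\approx f_2\cH$: the velocity is continuous across $x=\lambda$ but the adiabatic action picks up the factor $f_2$, the ratio of the new box height (the lower chamber, height $f_2$) to the old (the right chamber, height $1$). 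Combined with the affine rescaling of the transition phase into $\rho$ (of slope $-1/f_2$, reflecting the faster bounce cycle in the box of height $f_2$), this produces the main term $G^{Ls}_{12}$. Taylor expanding the correction terms $m\dot m$ and $m^2\ddot m/(3v)$ in the $J$-formula of Lemma~\ref{lowercoord}, together with the contributions of the near-transition collisions, in powers of $(\rho-1)$ and $\cH^{-1}$, and collecting them yields $H^{Ls}_{12}$ with coefficients built from $m_+,\dot m_+,\ddot m_+$ at $t_2^*$; everything left over, together with the error in identifying the crossing phase, is $\mathcal{O}_3(\cH^{-2})$. That all these expansions hold uniformly is what forces $V_*$ to be taken large.

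The step I expect to be the main obstacle is the uniform error bookkeeping in the transition window: one must control at once the exact crossing time of $x=\lambda$ relative to the vertical bounce phase, the $\mathcal{O}(1)$ collisions with the one-sided, moving, merely $C^3$ slit, and the change of adiabatic chart, and then check that every contribution beyond the two displayed terms is genuinely $\mathcal{O}_3(\cH^{-2})$, uniformly in the phase and in $\cH>V_*$. This is delicate precisely because the transition is non-adiabatic --- the slit appears as a constraint discontinuously in $x$, so the action jumps there and the usual adiabatic-invariance argument does not apply across the transition; it must be replaced by a direct collision-by-collision estimate carried out together with all derivatives up to third order.
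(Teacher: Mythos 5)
The paper does not give its own proof of this proposition; it asserts in the preamble to Section~3 that the adiabatic coordinates and normal forms of \cite{Zhou20} ``readily apply in our case if we think of the right slit in \cite{Zhou20} to be always on the floor,'' and then simply records the specialized formulas. Your proposal follows exactly that strategy and in fact spells out more of the derivation than the paper does --- tracking exact conservation through the right chamber via Lemma~\ref{floorcoord}, reading off the leading factor $f_2$ and the phase slope $-1/f_2$ from the change to the lower-chamber chart of Lemma~\ref{lowercoord} at $t_2^*$, and correctly flagging the uniform $\mathcal{O}_3(\mathcal{H}^{-2})$ bookkeeping across the non-adiabatic crossing as the hard step, which is precisely the content of \cite{Zhou20} that the paper cites rather than reproves.
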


Next we describe the long/short exit from the lower left chamber to the right floor.

We need the following constants for the long exit:
\begin{align*}
   &\chi_l= -\frac{1}{2} \dot{m}_-/m_-\\
   &\chi_l'= \frac{1}{8}\ddot{m}_-(1-\frac{1}{3}m_-^2)\\
   &\chi_l''= -\frac{1}{4}\ddot{m}_-
\end{align*}
where all functions above take values at the moment $t=t_1^*$.

\begin{prop}[Long Exit]\label{longexit}
Assume that $(\rho,\mathcal{J}) \in R_2^-$ with $\mathcal{J} > V_*$ and that 
$$\{ \mathcal{M}_*\mathcal{J}(2+\zeta_1^* - \zeta_2^*) -\rho \}_2 \gtrsim 1.$$ 
Then the Poincar\'e map $P^{Ll}_{21}: R_2^- \to R_1$ is given by 
$$(\sigma,\mathcal{H})=G^{Ll}_{21}(\rho,\mathcal{J}) + H^{Ll}_{21}(\rho,\mathcal{J}) + \mathcal{O}_3(\mathcal{H}^{-2})$$ 
where 
\[
G^{Ll}_{21}(\rho,\mathcal{J})=\left( -f_1\{ \mathcal{M}_*\mathcal{J}(2+\zeta_1^* - \zeta_2^*) -\rho \}_2 + 2 + f_1,
\frac{1}{f_1}\mathcal{J} + \chi_l(\sigma-2) \right)
\]
and
\[
H^{Ll}_{21}(\rho,\mathcal{J})=\left(0 , \frac{\chi_l'}{\mathcal{J}} +\chi_l''\frac{\sigma-1}{\mathcal{J}} -\frac{\chi_l''}{2}\frac{(\sigma-1)^2}{\mathcal{J}} \right).
\]
\end{prop}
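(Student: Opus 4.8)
The plan is to compose three elementary pieces of dynamics, exactly as in the derivation of the normal forms in Section 4 of \cite{Zhou20}: the near-integrable bouncing of the ball inside the lower left chamber, the singular passage around the tip of the slit near the critical time $t_1^*$, and the first landing on the right floor. Throughout, a collision $(t,v)$ with $v\ll -1$ in the lower left chamber is recorded in the adiabatic coordinates $(\zeta,J)$ of Lemma \ref{lowercoord}, a collision on the right floor with $v\gg 1$ in the coordinates $(\theta,H)$ of Lemma \ref{floorcoord}, and these are converted to the rescaled variables $(\rho,\mathcal{J})$ on $R_2^-$ and $(\sigma,\mathcal{H})$ on $R_1$. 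The target error $\mathcal{O}_3(\mathcal{H}^{-2})$ is the sum of (i) the cumulative adiabatic drift of $J$ over the chamber crossing, (ii) the second-order remainder of the local singular map, and (iii) the $\mathcal{O}_3$ remainders already present in Lemmas \ref{lowercoord} and \ref{floorcoord}.

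\textbf{Bouncing in the chamber.} Starting from $(\rho,\mathcal{J})\in R_2^-$, the ball makes a number of collisions with the slit from below on the order of $\mathcal{J}$ before it reaches the tip. By Lemma \ref{lowercoord} the action obeys $J_{n+1}=J_n+\mathcal{O}_3(J_n^{-3})$, so over that many steps $J$ changes only by $\mathcal{O}(\mathcal{J}^{-2})$; to the required order $J$ is frozen and the phase advances linearly, $\zeta_n=\zeta_0+2n/J_0+\mathcal{O}(\mathcal{J}^{-1})$. Unfolding to continuous time, the unwrapped phase advance from entry at $t_2^*$ to the singular collision near $t_1^*$ (one period later) equals $\mathcal{M}_*\mathcal{J}(2+\zeta_1^*-\zeta_2^*)$ up to $\mathcal{O}(1)$, while the initial offset contributes $-\rho$; the fractional part $\{\mathcal{M}_*\mathcal{J}(2+\zeta_1^*-\zeta_2^*)-\rho\}_2$ therefore locates the ball relative to the oscillating slit at the instant it arrives at the tip. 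The hypothesis $\{\cdots\}_2\gtrsim 1$ is precisely the condition that the last free flight clears the slit from below and wraps around the tip along the long trajectory of Figure \ref{fig:lowerroute} (with the extra bounce on the floor to the left of the slit) rather than the short one; this must be checked against the elementary geometry, and it is what pins down the branch $P^{Ll}_{21}$ together with the additive constant.

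\textbf{The singular passage.} Near $t=t_1^*$ I would write the collision dynamics in physical coordinates and Taylor-expand the slit height, equivalently $m(t)=-f(t)$, to second order about $t_1^*$; the one-sided values $m_-,\dot m_-,\ddot m_-$ are what enter. The ball's speed is continuous across the corner collision at the tip up to the contribution of the slit's instantaneous velocity, while the relevant distance scale jumps from $|m_-|=f_1$ (slit-to-floor on the left) to $1$ (the value of $l$ on the floor strip $t_1^*<t<t_2^*$). Matching the adiabatic action $J=\tfrac{\mathcal{M}_*}{2}(mv+m\dot m+\tfrac{m^2\ddot m}{3v})$ just before the passage to $H=\mathcal{L}_*v/2$ just after it, and matching the angle variables, produces the affine leading part $G^{Ll}_{21}$: $\sigma$ is affine in the incoming phase with slope $-f_1$ and constant $2+f_1$, and $\mathcal{H}=\mathcal{J}/f_1$. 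The first correction $\chi_l(\sigma-2)$ with $\chi_l=-\tfrac12\dot m_-/m_-$ comes from the $m\dot m$ term in the action together with the slit-velocity-induced shift of the collision time; the $\mathcal{O}(\mathcal{H}^{-1})$ terms $\chi_l'/\mathcal{J}$ and the $\chi_l''$ terms come from the curvature $\ddot m_-$, entering both through the $\tfrac{m^2\ddot m}{3v}$ correction to $J$ and through the second-order term of the flight time, and everything of higher order is absorbed into $\mathcal{O}_3(\mathcal{H}^{-2})$. Finally, on the floor strip $H$ is exactly conserved (Lemma \ref{floorcoord}), so the energy recorded at the first return to $R_1$ equals the post-passage energy, and the phase $\theta$ there is obtained by one more linear advance; converting back to $(\sigma,\mathcal{H})$ yields the stated $G^{Ll}_{21}+H^{Ll}_{21}+\mathcal{O}_3(\mathcal{H}^{-2})$.

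\textbf{Main obstacle.} The near-integrable estimates in the chamber and the $H$-conservation on the floor are routine; the delicate part is the local analysis at the tip. One must (a) verify that the hypothesis on $\{\cdots\}_2$ genuinely selects the long trajectory, and compute the correct count of bounces and the correct affine constant $2+f_1$; (b) carry the Taylor expansion of $f$ at $t_1^*$ to second order while solving the implicit equation for the singular collision time, so as to extract the exact coefficients $\chi_l,\chi_l',\chi_l''$; and (c) check that all the $\mathcal{O}(\cdot)$ bounds are uniform for $\mathcal{J}>V_*$ and stable under three derivatives, so that the remainder is honestly $\mathcal{O}_3(\mathcal{H}^{-2})$. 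Since the analogous computation for the two-slit model is carried out in \cite{Zhou20}, the cleanest route is to reduce to that case by regarding the right slit of \cite{Zhou20} as lying permanently on the floor, as already noted at the start of Section 3, and to record only the simplifications forced by $l\equiv 1$ on the floor strip.
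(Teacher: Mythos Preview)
Your proposal is correct and follows the same route as the paper: the paper gives no independent proof of this proposition but simply tailors the normal forms of \cite{Zhou20} to the present setting by treating the right slit there as permanently on the floor, which is exactly the reduction you invoke at the end. Your three-piece decomposition (adiabatic chamber bouncing, local singular passage at $t_1^*$, and floor landing with $H$ conserved) is precisely the structure of the computation in \cite{Zhou20} Section~4 that the paper is appealing to.
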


We need the following constants to describe the short exit:
\begin{align*}
&\chi_s'= \frac{1}{4}\ddot{m}_-\\
&\chi_s''= \frac{1}{24}m_-(m_-^2\ddot{m}_- - 3\ddot{m}_-)
\end{align*}
where all functions above take values at the moment $t=t_1^*$.

\begin{prop}[Short Exit]\label{shortexit}
Assume that $(\rho,\mathcal{J}) \in R_2^-$ with $\mathcal{J} > V_*$ and that 
$$ \{ \mathcal{M}_*\mathcal{J}(2+\zeta_1^* - \zeta_2^*) -\rho \}_2 \lesssim 1.$$ 
Then the Poincar\'e map $P^{Ls}_{21}: R_2^- \to R_1$ is given by 
$$(\sigma,\mathcal{H})=G^{Ls}_{21}(\rho,\mathcal{J}) + H^{Ls}_{21}(\rho,\mathcal{J}) + \mathcal{O}_3(\mathcal{J}^{-2})$$ where
\[
G^{Ls}_{21}(\rho,\mathcal{J})=\left( -f_1 \{ \mathcal{M}_*\mathcal{J}(2+\zeta_1^* - \zeta_2^*) -\rho \}_2 + f_1,
\frac{1}{f_1}\mathcal{J} + \chi_l\sigma \right)
\]
and
\[
H^{Ls}_{21}(\rho,\mathcal{J})=\left(0 , \chi_s'\frac{\sigma-1}{\mathcal{J}} +\frac{\chi_s'}{2}\frac{(\sigma-1)^2}{\mathcal{J}} -\frac{\chi_s''}{\mathcal{J}} \right).
\]
\end{prop}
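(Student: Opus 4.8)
The plan is to derive the formula for $P^{Ls}_{21}$ by specializing the normal-form analysis of \cite{Zhou20} to the present degenerate geometry, in which the right oscillating slit of \cite{Zhou20} is replaced by the fixed floor; concretely this amounts to setting the right-obstacle function to $0$, so that $l\equiv 1$, $m\equiv -1$ on the right segment and the right-floor regime reduces to the purely adiabatic one of Lemma \ref{floorcoord}. The map $P^{Ls}_{21}$ transports the ball from its first post-entry collision with the slit from below in the lower-left chamber — encoded by $(\rho,\mathcal J)$ on $R_2^-$, a rescaling near $t=t_2^*$ of the adiabatic pair $(\zeta,J)$ of Lemma \ref{lowercoord} — through the bulk of the lower chamber during the window $[t_2^*,2]\cup[0,t_1^*]$, across the singular instant $t=t_1^*$, to its first collision with the right floor — encoded by $(\sigma,\mathcal H)$ on $R_1$, a rescaling near $t=t_1^*$ of $(\theta,H)$ of Lemma \ref{floorcoord}. ``Short exit'' is the branch, selected by $\{\mathcal M_*\mathcal J(2+\zeta_1^*-\zeta_2^*)-\rho\}_2\lesssim 1$, in which the last collision before the ball crosses $x=\lambda$ is with the slit and the ball then descends straight to the right floor with no intermediate ceiling bounce (the ``long'' branch having the extra bounce off the ceiling on the right, cf. Figure \ref{fig:lowerroute}); it is the more direct of the two lower-exit branches.

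The computation proceeds in three stages. First I would propagate $(\zeta,J)$ forward from the $R_2^-$ state using the return-map asymptotics of Lemma \ref{lowercoord}: over the $N=\mathcal O(\mathcal J)$ collisions needed for the adiabatic clock to reach $t_1^*$, the action accumulates $\sum\mathcal O_3(J^{-3})=\mathcal O_3(\mathcal J^{-2})$ and the angle accumulates $2N/J+\sum\mathcal O_3(J^{-4})$, while the integer part of the number of bounces is pinned down and the fractional part — how far through its bounce cycle the ball is when the clock reaches $t_1^*$ — is precisely $\{\mathcal M_*\mathcal J(2+\zeta_1^*-\zeta_2^*)-\rho\}_2$. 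Second, resolve the crossing at $t=t_1^*$ in the short-exit configuration fixed by this fractional part: write out the single free-flight segment that carries the ball from its last slit collision, across $x=\lambda$, down to the first right-floor collision; since the effective cavity ceiling jumps there from height $f_1$ to $1$ while the speed is modified only by the $\mathcal O(1)$ kick from the moving slit, the two adiabatic actions are related by the factor $1/f_1$, yielding the leading $\tfrac1{f_1}\mathcal J$ in $\mathcal H$, and the residual phase supplies the $\{\cdot\}_2$-linear part of $\sigma$ together with the offset $+f_1$. Third, expand one further order using the degree-two Taylor jet of $f$ at $t_1^*-$: with $\ddot f\equiv 0$ near $t_1^*$ one gets exactly $G^{Ls}_{21}$ up to the $\chi_l\sigma$ term coming from the jump of $\dot m$, and the curvature $\ddot m_-$ produces the corrections $\chi_s'=\tfrac14\ddot m_-$, $\chi_s''=\tfrac1{24}m_-(m_-^2\ddot m_--3\ddot m_-)$ that constitute $H^{Ls}_{21}$; the $C^3$ hypothesis on $f$ is exactly what is needed to bound the accumulated tail and its first three derivatives by $\mathcal O_3(\mathcal J^{-2})$.

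The hard part will be the second and third stages — tracking the angle variable across the singular strip to the correct order. One must identify exactly which collision is the last one before $x=\lambda$, verify that the $\mathcal O(\mathcal J^{-1})$ fuzziness hidden in the threshold $\{\cdot\}_2\lesssim 1$ does not corrupt the stated formula, and get the affine constants right — in particular the offset $+f_1$ in the first component and the argument $\sigma$ (rather than $\sigma-2$, as in the Long Exit formula) of $\chi_l$, both of which come from how one aligns the origin of the rescaled phase on $R_1$ with the geometry of the crossing. A further point requiring care is the uniformity of the $\mathcal O_3$ remainder: the per-collision errors and their derivatives in the initial data are summed over the whole window $[t_2^*,2]\cup[0,t_1^*]$, and since these derivatives can a priori amplify along the orbit one first needs the a priori bound that the orbit stays $\mathcal O(\mathcal J^{-1})$-close to the piecewise-linear model throughout — which is where the $C^3$ regularity and the smallness of $1/V_*$ enter. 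Granting the general lower-chamber exit normal form of \cite{Zhou20}, the proposition then follows by the substitution ``right obstacle $\equiv$ floor'' and a reading-off of the specialized constants $\chi_l,\chi_s',\chi_s''$; carrying out the derivation directly is exactly the three-stage scheme above.
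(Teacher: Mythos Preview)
Your approach is correct and matches the paper's: the paper does not give a standalone proof of this proposition but simply declares that ``the adiabatic coordinates and the normal forms in Section 4 from \cite{Zhou20} readily apply in our case if we think of the right slit in \cite{Zhou20} to be always on the floor,'' which is exactly the specialization you propose. Your three-stage sketch (adiabatic propagation via Lemma~\ref{lowercoord}, resolving the crossing at $t_1^*$ in the short-exit geometry, then extracting the $\ddot m_-$ corrections) is in fact more detailed than anything the paper supplies, since the paper treats all of Propositions~\ref{uppernormal}--\ref{shortexit} as direct imports from \cite{Zhou20} without reproducing the computation.
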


We note that the derivatives of the linear parts $G$ of $P^{Ll}_{12}$ and $P^{Ls}_{12}$ are identical, and so are those of $P^{Ll}_{21}$ and $P^{Ls}_{21}$.


\subsection{The Divided Phase Cylinder}\label{divphase}
We dissect the behavior of the ball in a period into two stages: first it leaves the right floor and enters either the left upper or the left lower chamber, and then it exits the left chamber and returns to the right floor.

Depending on the ball's choice of entering the upper or lower left chamber, the $(\sigma,\mathcal{H})$-phase cylinder on the right floor $R_1$ are divided into three parts: the points in
$$
\mathcal{L}^l_{en}:= \{\{ \mathcal{L}_*\mathcal{H}(\theta_2^* - \theta_1^*) -\sigma \}_2 \gtrsim 2-f_2\}
$$ enter the lower left chamber following the Long Route, the points in 
$$
\mathcal{L}^s_{en}:=\{\{ \mathcal{L}_*\mathcal{H}(\theta_2^* - \theta_1^*) -\sigma \}_2 \lesssim f_2\}
$$ enter the lower left chamber following the Short Route, and the points in 
$$\mathcal{U}_{en}:=\{ f_2 \lesssim \{ \mathcal{L}_*\mathcal{H}(\theta_2^* - \theta_1^*) -\sigma \}_2 \lesssim 2-f_2\}$$ enter the upper left chamber (c.f. Figure \ref{fig:dividedphase} on the top). 

Note that $\mathcal{L}^l_{en}$, $\mathcal{L}^s_{en}$ and $\cU_{en}$ consist of connected components indexed by integers $m \geq 1$: for instance, a connected components $\cU_{en,m}$ of $\cU_{en}$ is defined by (see Figure \ref{fig:dividedphase})
$$
f_2 \lesssim f_2 \mathcal{L}_*\mathcal{H}(\theta_2^* - \theta_1^*) -\sigma -m \lesssim 2-f_2.
$$

We collect all the half-revolutions and define $$\mathcal{P}=\{P_{12}^{U}, P_{21}^{U}, P_{12}^{L l}, P_{12}^{Ls}, P_{21}^{Ls}, P_{21}^{Ll}\}.$$

\begin{definition}\label{complete-crv}
For every map $P \in \mathcal{P}$ we denote by $F_{P,m}$ the connected component indexed by $m$. 
\end{definition}

For the points with large starting energy $v_0>V_*$ we denote by 
$$
S_{-1, \V}=\bigcup_{m:\text{ }\mathcal{U}_{en,m}^{s} \subset [0,2]\times (\V, \infty) }\mathcal{U}_{en,m}
$$
and $S_{+1}$ the compliment of $S_{-1, \V}$
$$
S_{+1, \V}=[0,2]\times (\V, \infty)\setminus S_{+1, \V}.
$$
The choice of $+1$ and $-1$ will be clear later. For ease of presentation, we may drop the term $\V$ in the notations $S_{+1, \V}$ and $S_{+1, \V}$.
Next we suppose the ball is in the upper left chamber $R_2^+$ in $(\tau,\mathcal{I})$-cylinder, then it has no choice but to return to the floor on the right (c.f. Figure \ref{fig:dividedphase} on the left). However, if we suppose the ball is in the lower left chamber $R_2^-$ in the $(\rho,\mathcal{J})$-cylinder, then it may return to the right floor following either the Long Exit for points in 
$$
\mathcal{L}_{ex}^l:=\{ \mathcal{M}_*\mathcal{J}(2+\zeta_1^* - \zeta_2^*) -\rho \}_2 \gtrsim 1 \}$$ or the Short Exit for points in 
$$\mathcal{L}_{ex}^s:= \{ \mathcal{M}_*\mathcal{J}(2+\zeta_1^* - \zeta_2^*) -\rho \}_2 \lesssim 1 \}$$ (c.f. Figure \ref{fig:dividedphase} on the right).\\

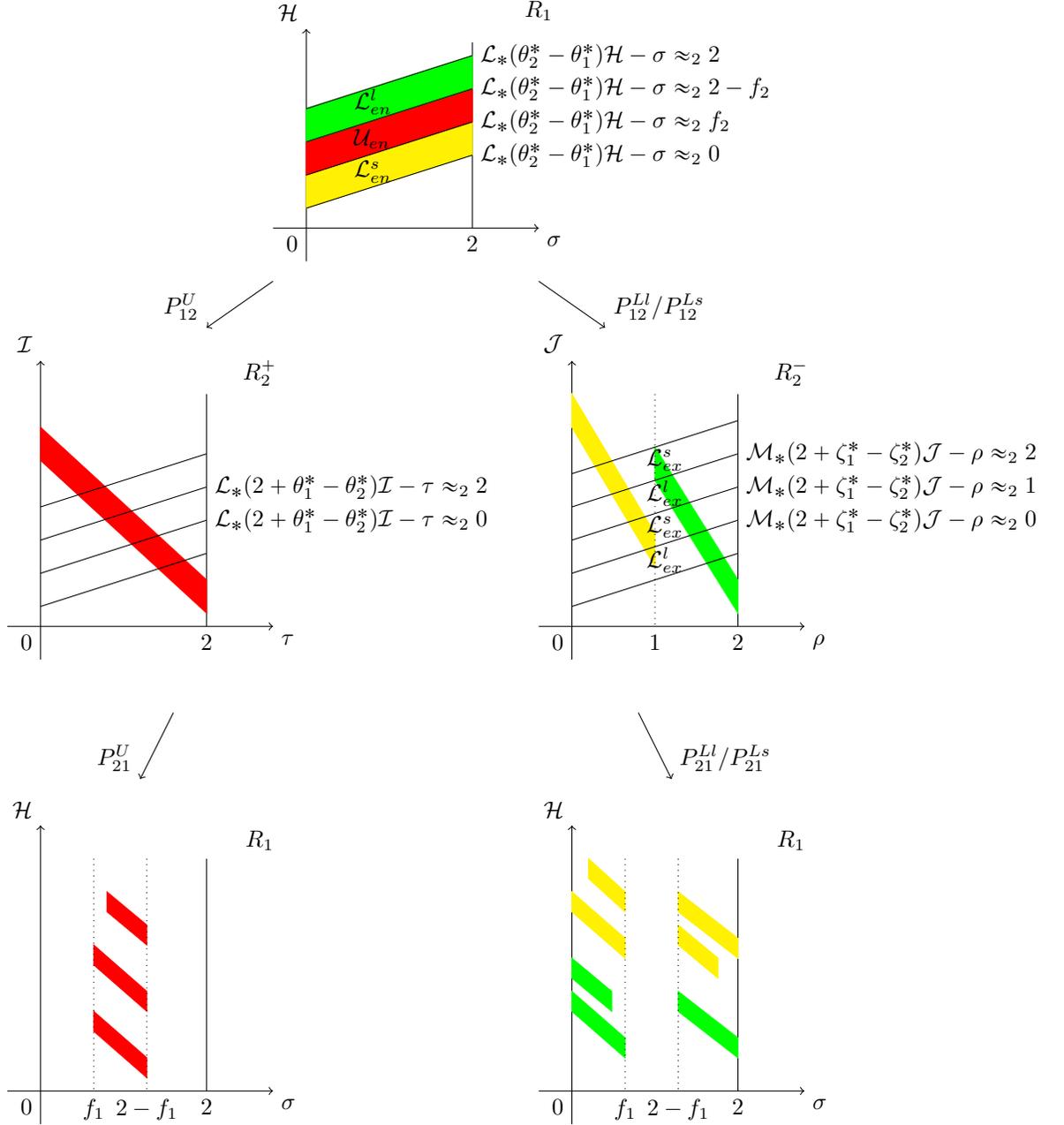
\begin{figure}[!ht]
    \centering
    \begin{tikzpicture}
       \draw[->] (-2,0)--(2,0) node[anchor=north west]{$\sigma$};
       \draw[->] (-1.5,-0.5)--(-1.5,3)node[anchor=south east]{$\mathcal{H}$};
       \draw (1,2.8)--(1,0) node[anchor=north]{2};
       \draw (-1.5,0) node[anchor=north east]{0}
             (2,3) node[anchor=south]{$R_1$};
             
       \filldraw[yellow] (-1.5,0.3) -- (1,1.1) -- (1,1.6) -- (-1.5,0.8) -- cycle;
       \filldraw[red] (-1.5,0.8) -- (1,1.6) -- (1,2.1) -- (-1.5,1.3) -- cycle;
       \filldraw[green] (-1.5,1.3) -- (1,2.1) -- (1,2.6) -- (-1.5,1.8) -- cycle;
       \draw (-0.9,0.85) node[anchor=west]{$\mathcal{L}_{en}^s$}
             (-0.9,1.35) node[anchor=west]{$\mathcal{U}_{en}$}
             (-0.9,1.9) node[anchor=west]{$\mathcal{L}_{en}^l$};
       
       \draw (-1.5,0.3) -- (1,1.1) node[anchor=west]{$\mathcal{L}_*(\theta_2^*-\theta_1^*)\mathcal{H}-\sigma\approx_2 0$}
             (-1.5,0.8) -- (1,1.6) node[anchor=west]{$\mathcal{L}_*(\theta_2^*-\theta_1^*)\mathcal{H}-\sigma\approx_2 f_2$}
             (-1.5,1.3) -- (1,2.1) node[anchor=west]{$\mathcal{L}_*(\theta_2^*-\theta_1^*)\mathcal{H}-\sigma\approx_2 2-f_2$}
             (-1.5,1.8) -- (1,2.6) node[anchor=west]{$\mathcal{L}_*(\theta_2^*-\theta_1^*)\mathcal{H}-\sigma\approx_2 2$};
             
       \draw[->] (-6,-6)--(-2,-6) node[anchor=north west]{$\tau$};
       \draw[->] (-5.5,-6.5)--(-5.5,-2)node[anchor=south east]{$\mathcal{I}$};
       \draw (-3,-2.5)--(-3,-6) node[anchor=north]{2}
             (-5.5,-6) node[anchor=north east]{0};
       \draw (-2.2,-2.5) node[anchor=south]{$R_2^+$};
       
       \filldraw[red] (-5.5,-3) -- (-5.5,-3.5) -- (-3,-5.8) -- (-3,-5.3) -- cycle;
       
       \draw (-5.5,-5.7) -- (-3,-4.9) 
             (-5.5,-5.2) -- (-3,-4.4)  node[anchor=west]{$\mathcal{L}_*(2+\theta_1^*-\theta_2^*)\mathcal{I}-\tau\approx_2 0$}
             (-5.5,-4.7) -- (-3,-3.9)  node[anchor=west]{$\mathcal{L}_*(2+\theta_1^*-\theta_2^*)\mathcal{I}-\tau\approx_2 2$}
             (-5.5,-4.2) -- (-3,-3.4);
             
       \draw[->] (2,-6)--(6,-6) node[anchor=north west]{$\rho$};
       \draw[->] (2.5,-6.5)--(2.5,-2)node[anchor=south east]{$\mathcal{J}$};
       \draw (5,-2.5) -- (5,-6) node[anchor=north]{2}
             (2.5,-6) node[anchor=north east]{0};
       \draw (5.8,-2.5) node[anchor=south]{$R_2^-$};
       
       \filldraw[yellow] (2.5,-2.5) -- (2.5,-3) -- (3.75,-5.1) -- (3.75,-4.6) -- cycle;
       \filldraw[green] (3.75,-3.3) -- (3.75,-3.8) -- (5,-5.8) -- (5,-5.3) -- cycle;
       
       \draw[dotted] (3.75,-2.5) -- (3.75,-6) node[anchor=north]{1};
       \draw (3.5,-4.5) node[anchor=west]{$\mathcal{L}_{ex}^s$}
             (3.5,-4) node[anchor=west]{$\mathcal{L}_{ex}^l$}
             (3.5,-3.5) node[anchor=west]{$\mathcal{L}_{ex}^s$}
             (3.5,-5) node[anchor=west]{$\mathcal{L}_{ex}^l$};
       
       \draw (2.5,-5.7) -- (5,-4.9) 
             (2.5,-5.2) -- (5,-4.4) node[anchor=west]{$\mathcal{M}_*(2+\zeta_1^*-\zeta_2^*)\mathcal{J}-\rho\approx_2 0$}
             (2.5,-4.7) -- (5,-3.9) node[anchor=west]{$\mathcal{M}_*(2+\zeta_1^*-\zeta_2^*)\mathcal{J}-\rho\approx_2 1$}
             (2.5,-4.2) -- (5,-3.4) node[anchor=west]{$\mathcal{M}_*(2+\zeta_1^*-\zeta_2^*)\mathcal{J}-\rho\approx_2 2$}
             (2.5,-3.7) -- (5,-2.9);
             
       \draw[->] (-6,-13)--(-2,-13) node[anchor=north west]{$\sigma$};
       \draw[->] (-5.5,-13.5)--(-5.5,-9)node[anchor=south east]{$\mathcal{H}$};
       \draw (-3,-9.5)--(-3,-13) node[anchor=north]{2};
       \draw (-2.2,-9.5) node[anchor=south]{$R_1$}
             (-5.5,-13) node[anchor=north east]{0};
       
       \filldraw[red] (-4.5,-10) -- (-4.5,-10.3) -- (-3.9,-10.8) -- (-3.9,-10.5) -- cycle
                      (-4.7,-10.8) -- (-4.7,-11.1) -- (-3.9,-11.8) -- (-3.9,-11.5) --  cycle
                      (-4.7,-11.8) -- (-4.7,-12.1) -- (-3.9,-12.8) -- (-3.9,-12.5) --  cycle;
       
       \draw[dotted] (-4.7,-9.5) -- (-4.7,-13) node[anchor=north]{$f_1$}
                     (-3.9,-9.5) -- (-3.9,-13) node[anchor=north]{$2-f_1$};
             
       \draw[->] (2,-13)--(6,-13) node[anchor=north west]{$\sigma$};
       \draw[->] (2.5,-13.5)--(2.5,-9)node[anchor=south east]{$\mathcal{H}$};
       \draw (5,-9.5) -- (5,-13) node[anchor=north]{2};
       \draw (5.8,-9.5) node[anchor=south]{$R_1$}
             (2.5,-13) node[anchor=north east]{0};
       
       \filldraw[yellow] (2.75,-9.5) -- (2.75,-9.8) -- (3.3,-10.3) -- (3.3,-10) -- cycle
                         (2.5,-10) -- (2.5,-10.3) -- (3.3,-11) -- (3.3,-10.7) -- cycle
                         (4.1,-10) -- (4.1,-10.3) -- (5,-11) -- (5,-10.7) -- cycle
                         (4.1,-10.5) -- (4.1,-10.8) -- (4.7,-11.3) -- (4.7,-11) -- cycle;
        \filldraw[green] (2.5,-11) -- (2.5,-11.3) -- (3.1,-11.8) -- (3.1,-11.5) -- cycle
                         (2.5,-11.5) -- (2.5,-11.8) -- (3.3,-12.5) -- (3.3,-12.2) -- cycle
                         (4.1,-11.5) -- (4.1,-11.8) -- (5,-12.5) -- (5,-12.2) -- cycle;
       
       \draw[dotted] (3.3,-9.5) -- (3.3,-13) node[anchor=north]{$f_1$}
                     (4.1,-9.5) -- (4.1,-13) node[anchor=north]{$2-f_1$};
                     
        \draw[->] (-2,-0.8) -- (-3,-1.5) node[anchor=south east]{$P_{12}^U$};
        \draw[->] (2,-0.8) -- (3,-1.5) node[anchor=south west]{$P_{12}^{Ll}$/$P_{12}^{Ls}$};
        \draw[->] (-3.5,-7.3) -- (-4,-8.3) node[anchor=south east]{$P_{21}^U$};
        \draw[->] (3.5,-7.3) -- (4,-8.3) node[anchor=south west]{$P_{21}^{Ll}$/$P_{21}^{Ls}$};
    \end{tikzpicture}
    \caption{The Divided Phase Cylinder}
    \label{fig:dividedphase}
\end{figure}

When jumping from one strip in the phase cylinder to another, an ascending (viewed from left to right) strip is mapped into a narrow but long descending (viewed from left to right) strip. More precisely, the top boundary
$\{ \mathcal{L}_*\mathcal{H}(\theta_2^* - \theta_1^*) -\sigma \}_2 \approx 2-f_2$ and the bottom boundary $\{ \mathcal{L}_*\mathcal{H}(\theta_2^* - \theta_1^*) -\sigma \}_2 \approx f_2$ of $\mathcal{U}_{en}$ are mapped by $P_{12}^U$ into the left boundary $\{\tau=0\}$ and the right boundary $\{\tau=2\}$ respectively, etc. (c.f. Figure \ref{fig:dividedphase}).

From the discussion in Section \ref{normal} half-revolution $P\in\cP$ from one strip to another is decomposed into a linear part $G$, a first-order correction $H$ and a second-order error term $\mathcal{O}_3$:
\[\label{cnt}
   P=G+H+\mathcal{O}_3.
\]

It follows from an easy check of calculus that the maps $P$ are $C^2$ perturbations of their linear parts $G$ for sufficiently large initial energies $\mathcal{H}>V_*$; this observation plays a vital role in the analysis in the next Section.

\medskip

Now we describe how the energy of the ball changes in a period when it follows two different routes.

For a point $\varrho=(x_0,y_0) \in R_1$ we denote (with an abuse of notation) by $y_n=P^n y_0$ the energy of the point $\varrho$ at the $n$-th half-revolution and $z_n=\cf^n y_0$ the energy at the $n$-th complete revolution, i.e. $z_n=y_{2n}$.

\begin{prop}\label{M00}
There exists $\ell<1$ so that for sufficiently large $\V=\V(f_1, f_2, \dot f_1,\dot f_2)$ we have for every $(x_0, z_0)\in S_{+1, \V}$ that
\begin{equation}
\ell\frac{f_2}{f_1} z_0\leq z_1
\end{equation}
and respectively for $(x_0, z_0)\in S_{-1, \V}$
\begin{equation}
\ell\left(\frac{1-f_2}{1-f_1}\right) z_0 \leq z_1. 
\end{equation}
Under the assumption $f_2>f_1$ and for an appropriate choice of $\ell$ and $\V$ we will have that for $(x_0,z_0)\in S_{+1}$, $z_0< z_1$ and for $(x_0,z_0)\in S_{-1}$ $z_0> z_1$.
Then, for every $(x_0,y_0) \in S_{+1, \V}\cup S_{-1, \V}$
\begin{equation}\label{intrmd}
\min\left\{ \frac{f_2}{2} y_0,\frac{\left(1-f_2\right)}{2} y_0 \right\}
\leq y_1.
\end{equation}
and for some $c<\infty$ we have for all $(x_0, z_0) \in S_{+1, \V} \cup S_{-1, \V}$
\begin{equation}
\left|\frac{\ln z_n - \ln z_0}{n}\right|\leq c.
\end{equation}
\end{prop}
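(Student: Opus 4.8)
\medskip
\noindent\textbf{Proof plan.} The plan is to read all four displayed inequalities and the two sign statements directly off the energy coordinate of the normal forms of Section~\ref{normal}. In each of Propositions~\ref{uppernormal}, \ref{longentry}, \ref{shortentry}, \ref{longexit}, \ref{shortexit} the energy of the image of a half--revolution has the form $(\text{leading ratio})\cdot(\text{incoming energy})+(\text{correction})$, where the correction is the relevant $\Delta$-, $\kappa$- or $\chi$-term evaluated at the bounded angular variable $\sigma,\tau$ or $\rho$ of the image, plus the $H$- and $\mathcal{O}_3$-terms. The observation to pin down is that this correction is bounded \emph{uniformly in the energy}: it does grow with $|\dot f_1|,|\dot f_2|$ through the constants $\Delta_i,\kappa,\chi$, but it stays bounded as $\mathcal{H},\mathcal{I},\mathcal{J}\to\infty$. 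So the whole proof reduces to choosing $\V$, depending on $f_1,f_2,\dot f_1,\dot f_2$, large enough to swamp these corrections.

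First I would treat one half--revolution. A point of $R_1$ above $\V$ enters either the upper chamber, where $P^U_{12}$ sends it to energy $(1-f_2)\mathcal{H}+\mathcal{O}(1)$, or the lower chamber, where $P^{Ll}_{12}$ or $P^{Ls}_{12}$ sends it to $f_2\mathcal{H}+\mathcal{O}(1)$ (the two lower entries having the same linear part). Since $c\le f_2\le 1-c$ the leading factor is at least $c$, so once $\V$ is large enough that the correction --- a quantity bounded by a constant $C_1=C_1(f_1,f_2,\dot f_1,\dot f_2)$ plus an $\mathcal{O}(\mathcal{H}^{-1})$ tail --- is below $\tfrac c2\,y_0$ for $y_0>\V$, the image energy is at least $\tfrac12\min\{f_2,1-f_2\}\,y_0$, which is \eqref{intrmd}. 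The same estimate applied to the return maps $P^U_{21},P^{Ll}_{21},P^{Ls}_{21}$, whose leading factors $\tfrac1{1-f_1},\tfrac1{f_1}$ exceed $1$, gives for $\V$ large a two--sided bound $a_0 y_0\le y_1\le b_0 y_0$ for every half--revolution, with $0<a_0<b_0<\infty$ fixed.

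Composing entry and return over a complete revolution collapses all five route combinations to the two ratios in the statement: on a lower route (i.e.\ $(x_0,z_0)\in S_{+1,\V}$) one gets $z_1=\tfrac1{f_1}\big(f_2 z_0+\mathcal{O}(1)\big)+\mathcal{O}(1)=\tfrac{f_2}{f_1}z_0+\mathcal{O}(1)$, and on the upper route ($S_{-1,\V}$) $z_1=\tfrac{1-f_2}{1-f_1}z_0+\mathcal{O}(1)$, the $\mathcal{O}(1)$ being uniform in $z_0$ and bounded by some $C=C(f_1,f_2,\dot f_1,\dot f_2)$. For the first two inequalities, given $\ell<1$ I would pick $\V$ so that $C\le(1-\ell)\tfrac{c}{1-c}\,\V$; both ratios exceed $\tfrac{c}{1-c}$, which closes it. For the sign statement I would use $f_2>f_1$, so that $\tfrac{f_2}{f_1}>1>\tfrac{1-f_2}{1-f_1}$ with gaps $\tfrac{f_2}{f_1}-1$ and $1-\tfrac{1-f_2}{1-f_1}=\tfrac{f_2-f_1}{1-f_1}$ bounded below in terms of $f_1,f_2$; enlarging $\V$ once more so that $C$ is at most half the smaller gap times $\V$ prevents the $\mathcal{O}(1)$ from flipping the sign, and taking $\ell$ close enough to $1$ that $\ell f_2/f_1>1$ matches the phrasing. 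Finally, for the last inequality the two previous steps show that one complete revolution multiplies the energy by a factor in a fixed compact subinterval $[a,b]\subset(0,\infty)$ (one may take $a=\min\{(c/2)^2,\ \ell c/(1-c)\}$, $b=(1-c)/c+C$), so $|\ln z_{k+1}-\ln z_k|\le c:=\max\{|\ln a|,\ln b\}$ at each step, and telescoping over the $n$ revolutions gives $|\ln z_n-\ln z_0|/n\le c$.

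\textbf{The hard part.} The only step requiring real care is the uniform-in-energy boundedness of the corrections asserted in the first paragraph: one must run through the explicit formulas for $\Delta_i,\kappa,\chi$ and for the $H$- and $\mathcal{O}_3(\mathcal{H}^{-2})$ terms of Section~\ref{normal} and confirm that, although these are large because $|\dot f_1|,|\dot f_2|$ are now huge, they are all products of energy-independent constants with the bounded variables $\sigma,\tau,\rho\in[0,2]$ together with tails that vanish as the energy grows --- so that a \emph{single} threshold $\V=\V(f_1,f_2,\dot f_1,\dot f_2)$ makes every inequality of the Proposition hold at once. A secondary, purely bookkeeping matter is tracking the five route combinations and the coordinate changes between the velocity and the adiabatic variables; this is routine given the remark at the end of Section~\ref{normal} that long and short entries (resp.\ exits) share their linear parts. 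For the last inequality one uses the two-sided bound along orbits that remain above $\V$, which is guaranteed once one passes to the modified system of Section~5.
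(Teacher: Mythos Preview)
Your proposal is correct and follows essentially the same route as the paper: both arguments read off the energy coordinate of the normal forms, isolate the leading multiplicative factor ($f_2$, $1/f_1$, $1-f_2$, $1/(1-f_1)$) from an energy--independent additive correction $D=D(f_1,f_2,\dot f_1,\dot f_2)$, compose a half--revolution pair to get the ratios $f_2/f_1$ and $(1-f_2)/(1-f_1)$ up to a bounded error, and then absorb that error into $\ell<1$ by taking $\V$ large. Your identification of the ``hard part'' --- that the $\Delta,\kappa,\chi$ constants are large in $|\dot f_i|$ but uniform in the energy, together with the $H$- and $\mathcal O_3$-tails decaying --- is exactly the observation the paper opens with (``the parameters $\Delta_1,\Delta_2,\kappa_l,$ etc.\ do not depend on the energy of the ball''), and your remark that long/short entries and exits share linear parts matches the paper's bookkeeping. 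For the iterated logarithmic bound you are in fact slightly more careful than the paper: you record a genuine two--sided multiplicative bound $[a,b]\subset(0,\infty)$ per revolution and telescope, whereas the paper writes only the upper side explicitly (the lower side is implicit in its inequalities \eqref{R10}--\eqref{R20}); your caveat about needing the orbit to stay above $\V$ --- handled later by the modified system --- is also a fair point that the paper glosses over.
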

\begin{proof}

We observe that in the definition of the Poincare maps $P$ the parameters $\Delta_1 , \Delta_2, \kappa_l,$ etc. do not depend on the energy of the ball, and hence it follows that for some number $D=D(\dot f_1, \dot f_2)>0$ for all $\varrho=(x_0, y_0) \in S_{+1,\V}$, we have for $P_{12}^{Ll/Ls}$ (lower route) that
\begin{equation}\label{F1}
f_2 y_0 -D \leq y_1 \leq f_2 y_0 + D,
\end{equation}
and for $P_{21}^{Ll/Ls}$
\begin{equation}\label{F2}
\frac{1}{f_1} y_0 -D \leq y_1 \leq \frac{1}{f_1} y_0 + D.
\end{equation}
And respectively for $P_{12}^{U}$ (upper route) and every $(x_0, y_0) \in S_{-1}$
\begin{equation}\label{F3}
\left(1-f_2\right) y_0-D \leq y_1\leq \left(1-f_2\right) y_0+D,
\end{equation}
and for $P_{21}^{U}$
\begin{equation}\label{F4}
\left(\frac{1}{1-f_1}\right) y_0-D \leq y_1\leq \left(\frac{1}{1-f_1}\right) y_0+D.
\end{equation}
After a complete revolution, when the particle returns to the $R_1$ cylinder, we will have that for any $(x_0, y_0) \in S_{+1}$
$$
\left(\frac{f_2}{f_1}\right) y_0-D_0 \leq y_2\leq \left(\frac{f_2}{f_1}\right) y_0+D_0,
$$
and respectively for the upper route $(x_0, y_0) \in S_{-1}$
$$
\left(\frac{1-f_2}{1-f_1}\right) y_0-D_0 \leq y_2\leq \left(\frac{1-f_2}{1-f_1}\right) y_0+D_0,
$$
where $D_0=D_0(f_1, f_2, \dot f_1,\dot f_2)>0$. Now let $\ell<1$ be very close to $1$. Note that if $\V$ is large enough then for all $(x_0, y_0)\in S_{+1}$, we will have that
\begin{equation}\label{R1}
\ell\frac{f_2}{f_1} y_0\leq y_2 \leq \frac{1}{\ell}\frac{f_2}{f_1} y_0
\end{equation}
and respectively for the lower route
\begin{equation}\label{R2}
\ell\left(\frac{1-f_2}{1-f_1}\right) y_0 \leq y_2\leq \frac{1}{\ell}\left(\frac{1-f_2}{1-f_1}\right) y_0.
\end{equation}

This proves the first two statements of the Proposition. 
We have already assumed that
\begin{equation}\label{asmpt}
f_2 > f_1
\end{equation}
We take $\ell$ so close to $1$ that 
$$
\ell \frac{f_2}{f_1}>1
$$
and
$$
\ell\left(\frac{1-f_2}{1-f_1}\right)<1.
$$
Note that under these assumptions, for $(x_0, z_0) \in S_{+1}$ we will have for the upper route that
$$
z_0> z_1
$$
and for the lower root
$$
z_0< z_1.
$$
Thus, under the assumption $f_2 > f_1$ the lower route accelerates (i.e. $\varrho \in S_{+1}$) while the upper route deaccelerates (respectively $\varrho \in S_{-1}$).

Since
$$
\left(\frac{1-f_2}{1-f_1}\right)<\frac{f_2}{f_1},
$$
then by \eqref{R1} and \eqref{R2}, for every $(x_0, z_0)\in S_{+1} \cup S_{-1}$ we have the following universal upper bound
$$ 
z_1 \leq z_0 \frac{1}{\ell}\frac{f_2}{f_1}.
$$
Iterating this we obtain
$$
z_n\leq z_0\left(\frac{1}{\ell}\frac{f_2}{f_1}
\right)^n .
$$
Hence
\begin{equation}\label{I1}
\left|\frac{\ln z_n - \ln z_0}{n}\right|\leq \ln \frac{1}{\ell}\left(\frac{f_2}{f_1}\right)=c<\infty.
\end{equation}

To obtain a universal lower bound, note that by \eqref{F1}, \eqref{F3} for every point $(x_0, y_0)\in S_{+1} \cup S_{-1}$ we have that
\begin{equation}
\min\{f_2 y_0 -D,\left(1-f_2\right) y_0-D \} \leq y_1.
\end{equation}
If $\V$ is large enough, then
$$
\min\{f_2 y_0 -D,\left(1-f_2\right) y_0-D \}\geq \min\left\{ \frac{f_2}{2} y_0,\frac{\left(1-f_2\right)}{2} y_0 \right\}.
$$
\end{proof}

\section{Hyperbolicity}
In this section we show that under the assumptions in Theorem \ref{expAcc} our system is uniformly hyperbolic in the sense that all the six half-revolution maps $P$ share a common unstable invariant cone, provided that $\dot{f}_1,\dot{f}_2$ are sufficiently large. 

We recall that $P$ is a $C^2$ perturbation of its linear part $G$. We begin with the discussion of uniform cone hyperbolicity for the linear maps $G$ and then transfer it to $P$ through the robustness of uniform hyperbolicity under $C^2$-perturbations. We recall that the Long Enter and the Short Enter have the same derivatives for the linear parts, and so do the Long Exit and the Short Exit, thus we use the same notations for their derivatives respectively: $DG_{12}^L := DG_{12}^{Ll}=DG_{12}^{Ls}$ and $DG_{21}^L := DG_{21}^{Ll}=DG_{21}^{Ls}$.

More precisely, these derivatives take the following forms: 
\[ DG_{12}^U = 
   \begin{pmatrix}
      \frac{1}{l_2} & -\frac{\mathcal{L}_*(\theta_2^*-\theta_1^*)}{l_2}\\
      \frac{\Delta_2}{l_2} & l_2-\frac{\Delta_2\mathcal{L}_*(\theta_2^*-\theta_1^*)}{l_2}
   \end{pmatrix},
\]

\[ DG_{21}^U = 
   \begin{pmatrix}
      l_1 & -\mathcal{L}_*(2+\theta_1^*-\theta_2^*)l_1\\
      \Delta_1 l_1 & \frac{1}{l_1}-\Delta_1\mathcal{L}_*(2+\theta_1^*-\theta_2^*)l_1
   \end{pmatrix},
\]

\[ DG_{12}^L = 
   \begin{pmatrix}
      \frac{1}{f_2} & -\frac{\mathcal{L}_*(\theta_2^*-\theta_1^*)}{f_2}\\
      \frac{\kappa_l}{f_2} & f_2-\frac{\kappa_l\mathcal{L}_*(\theta_2^*-\theta_1^*)}{f_2}
   \end{pmatrix},
\]
and 
\[ DG_{21}^L = 
   \begin{pmatrix}
      f_1 & -\mathcal{M}_*(2+\zeta_1^*-\zeta_2^*)f_1\\
      \chi_l f_1 & \frac{1}{f_1}-\chi_l\mathcal{M}_*(2+\zeta_1^*-\zeta_2^*)f_1
   \end{pmatrix}.
\]

First we observe that $\det DG_{12}^U=\det DG_{21}^U=\det DG_{12}^L=\det DG_{21}^L=1$. We obtain from Pesin theory that the uniform hyperbolicity can be coined in the sense of invariant cones as follows.

Consider a map $T:M \righttoleftarrow$. 
We say $\{\mathcal{C}_u (\varrho)\}_{\varrho\in M}$ and $\{\mathcal{C}_s (\varrho)\}_{\varrho\in M}$ are a family of \emph{unstable invariant cones} and \emph{stable invariant cones} respectively for the map $T$ if there exist $\lambda>1$ such that $\forall d\varrho\in\mathcal{C}_u(\varrho)$
\[
   D_{\varrho}T(\mathcal{C}_u (\varrho)) \subseteq \mathcal{C}_u(\mathcal{C}_u (T\varrho)),\quad 
   |D_{\varrho}T(d\varrho)| \ge \lambda |d\varrho| 
\]
and $\forall d\varrho\in\mathcal{C}_s (\varrho)$
\[
   D_{\varrho} T^{-1}(\mathcal{C}_s (\varrho)) \subseteq \mathcal{C}_s (T^{-1}\varrho),\quad 
   |D_{\varrho} T^{-1}(d\varrho)| \ge \lambda |d\varrho|.
\]
 
For hyperbolic linear maps, they naturally possess invariant un/stable cones: suppose that $T$ is a hyperbolic linear map and that $\mathbf{e}_u, \mathbf{e}_s$ are its unstable and stable unit eigenvectors respectively, then for any $\varrho\in M$ and any $d\varrho\in T_{\varrho} \varrho$, $d\varrho=c_u \mathbf{e}_u + c_s \mathbf{e}_s$ for some $c_u,c_s$. It is easy to check that 
\[
   \mathcal{C}_{u,k}(\varrho) := \{d\varrho:|c_u|>k|c_s| \}
\]
defines a family indexed by $k>1$ of unstable cones. A family of stable cones can be defined similarly with the inequality reversed 
\[
   \mathcal{C}_{s,k}(\varrho) := \{d\varrho:|c_s|>k|c_u| \}.
\]
Suppose the unstable eigenvalue of $T$ is $\lambda$, then the expansion rate $\lambda_k$ of an unstable cone $\mathcal{C}_{u,k}$ with opening gauge $k$ is 
\begin{align*}
    \lambda_k^2
    &= \frac{|D_{\varrho}T(d\varrho)|^2}{|d\varrho|^2}\\
    &= \frac{c_u^2\lambda^2 + c_s^2/\lambda^2 + 2c_u c_s \cos<\mathbf{e}_u,\mathbf{e}_u>}{c_u^2 + c_s^2 + 2c_u c_s \cos<\mathbf{e}_u,\mathbf{e}_u>}\\
    &\ge \frac{c_u^2\lambda^2 + c_s^2/\lambda^2 - 2|c_u c_s|}{c_u^2 + c_s^2 + 2|c_u c_s|}\\
    &> \frac{(k\lambda-1/\lambda)^2}{(k+1)^2}
\end{align*}
where we have used the fact that $|c_u|> k|c_s|$ for $dx\in\mathcal{C}_{u,k}$. The number $\displaystyle \frac{k\lambda-1/\lambda}{k+1}$ can be easily made larger than $1$ by taking $\lambda$ sufficiently large (in fact, $\lambda>2$ would suffice).\\

Now we claim that our linear maps $G_{12}^U$, $G_{21}^U$, $G_{12}^L$, $G_{21}^L$ share a common unstable invariant cone provided that $\dot{f}_1,\dot{f}_2$ are sufficiently large. We prove the claim in two steps: first we show in Lemma \ref{hyperbolicmatrix} that the unstable eigenvector of a hyperbolic map are almost vertical and the stable eigenvector remains a positive angular distance from the unstable cones provided that the bottom entries are significantly larger than the top entries; then we show in Proposition \ref{gcone} that the almost vertical unstable cones of our maps $G_{12}^U$, $G_{21}^U$, $G_{12}^L$, $G_{21}^L$ have a nontrivial intersection, which is the common invariant unstable invariant cone we aim for.

\begin{lemma}\label{hyperbolicmatrix}
Let $A_n \in SL(2, \mathbb{R})$
\begin{equation*}
A_n= 
\begin{pmatrix}
a_n & b_n \\
c_n & d_n
\end{pmatrix},
\end{equation*}
with nonzero entries. Assume that $a_n,b_n$ are uniformly bounded, $b_n/a_n\to M$ for some constant $M\neq 0$ and $c_n,d_n\to\infty$ as $n\to\infty$. Then the unstable eigenvector $\mathbf{e}_u\to (0,1)$ and the stable eigenvector $\mathbf{e}_s\to (-M,1)/\sqrt{M^2 +1}$ as $n\to\infty$.
\end{lemma}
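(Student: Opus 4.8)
The statement is a purely linear-algebra fact about a sequence of $SL(2,\mathbb R)$ matrices $A_n$ whose top row stays bounded with a fixed limiting ratio $b_n/a_n\to M$, and whose bottom row blows up. The natural approach is to compute the eigenvalues and eigenvectors explicitly, using that $\det A_n=1$, and then take the limit. First I would write the characteristic polynomial $\mu^2-(a_n+d_n)\mu+1=0$, with trace $\tau_n:=a_n+d_n\to\infty$ since $a_n$ is bounded and $d_n\to\infty$. Hence the eigenvalues are $\mu_n^{\pm}=\tfrac{\tau_n}{2}\bigl(1\pm\sqrt{1-4/\tau_n^2}\bigr)$, so $\mu_n^+\sim\tau_n\to+\infty$ (the unstable eigenvalue) and $\mu_n^-=1/\mu_n^+\to 0$ (the stable eigenvalue); in particular for $n$ large $A_n$ is hyperbolic, which is what makes the notion of un/stable eigenvectors meaningful.

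\textbf{Eigenvectors.} For the eigenvalue $\mu$, an eigenvector is $(b_n,\ \mu-a_n)$ (using the first row of $A_n-\mu I$), valid since $b_n\neq 0$. For $\mu=\mu_n^+\to\infty$ we get the direction $\bigl(b_n/(\mu_n^+-a_n),\ 1\bigr)$; since $b_n$ is bounded and $\mu_n^+-a_n\to\infty$, the first coordinate tends to $0$, giving $\mathbf e_u\to(0,1)$ after normalization. For $\mu=\mu_n^-\to 0$ we get the direction $\bigl(b_n,\ \mu_n^- - a_n\bigr)=\bigl(b_n,\ -a_n+o(1)\bigr)$; dividing by $-a_n$ (nonzero, bounded away from $0$? — see the obstacle below) this is $\bigl(-b_n/a_n,\ 1+o(1)\bigr)\to(-M,1)$, and normalizing gives $\mathbf e_s\to(-M,1)/\sqrt{M^2+1}$. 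That is exactly the claim, so the skeleton is short.

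\textbf{The main obstacle.} The one genuine gap is that the hypotheses as stated do not literally force $|a_n|$ to be bounded \emph{away from zero}, which the step ``divide by $-a_n$'' seems to need, and more importantly they do not immediately guarantee $\mu_n^--a_n$ stays comparable to $a_n$ rather than degenerating. I would handle this by arguing directly: since $\det A_n=a_nd_n-b_nc_n=1$ and $c_n,d_n\to\infty$ while $a_n,b_n$ are bounded, the relation $a_nd_n=1+b_nc_n$ combined with $b_n/a_n\to M\neq 0$ forces $a_n\bigl(d_n-Mc_n\bigr)\to$ a finite quantity, and one extracts that $a_n$ cannot tend to $0$ (else $b_n\to 0$ too by the ratio, making $a_nd_n-b_nc_n\to$ something one must reconcile with $=1$; one checks this carefully using the relative rates of $c_n,d_n$). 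Alternatively — and this is cleaner — I would observe that the stable eigenvector direction is $A_n^{-1}$-invariant, and $A_n^{-1}=\begin{pmatrix} d_n & -b_n\\ -c_n & a_n\end{pmatrix}$ has unstable eigenvalue $\mu_n^+\to\infty$ with unstable eigenvector obtained from \emph{its} first row: direction $\bigl(-b_n,\ \mu_n^+-d_n\bigr)$. Since $\mu_n^+-d_n = \bigl(\mu_n^+-\tau_n\bigr)+a_n = -\mu_n^-+a_n \to a_n$ (bounded), the blow-up is only in… wait—here the second coordinate stays bounded while the first is bounded too, so I instead normalize by noting the ratio of coordinates is $-b_n/(\mu_n^+-d_n)=-b_n/(a_n-\mu_n^-)\to -b_n/a_n\to -M$, giving the direction $(-M,1)$ up to scaling, hence $\mathbf e_s\to(-M,1)/\sqrt{M^2+1}$ with no division issue. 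So the robust route is: get $\mathbf e_u$ from the first row of $A_n-\mu_n^+ I$, get $\mathbf e_s$ from the first row of $A_n^{-1}-\mu_n^+ I$, and in each case the coordinate ratio has a clean limit because $\mu_n^+$ dominates the relevant bounded entries. I expect the write-up to be a page or less, with the only care needed being the uniform control of $\mu_n^\pm$ and checking that $a_n-\mu_n^-$ is bounded away from $0$ (it tends to $a_n$, and one shows $\liminf|a_n|>0$ from $\det=1$).
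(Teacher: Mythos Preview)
Your core computation is exactly the paper's: write the eigenvalues via the trace, take the eigenvector $(b_n,\mu-a_n)$ from the first row of $A_n-\mu I$, and read off the limiting directions. The paper does precisely this in three lines, simply asserting $b_n/(1/\lambda_n-a_n)\to -M$ without further comment.

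You are right to flag that last step: it needs $\mu_n^-/a_n\to 0$, equivalently $|a_n|$ bounded away from~$0$. However, neither of your proposed repairs closes the gap. The $A_n^{-1}$ route yields the coordinate ratio $-b_n/(\mu_n^+-d_n)=-b_n/(a_n-\mu_n^-)$, which is the \emph{same} quantity up to sign, so the ``no division issue'' claim is unfounded. And $\liminf|a_n|>0$ does \emph{not} follow from $\det A_n=1$: take $a_n=1/n$, $b_n=M/n$, $c_n=n$, $d_n=(1+M)n$ with $M\neq 0,-1$. All hypotheses of the lemma hold, yet $a_n\to 0$, and one computes directly that the stable eigenvector tends to the direction $(-(1+M),1)$, not $(-M,1)$. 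So the lemma as stated is actually false; the gap you spotted is real and cannot be filled from the given hypotheses alone.

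In the paper's application the top-row entries of each $DG$ are fixed nonzero constants independent of the large parameter $\dot f$, so $|a_n|$ is trivially bounded below and both the paper's argument and yours go through there. The clean fix is to add the hypothesis $\liminf_n|a_n|>0$ to the lemma.
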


\begin{proof}
The unstable eigenvalue of the matrix $A_n$ is 
\[
   \lambda_n = \frac{(a_n+d_n)+\sqrt{(a_n+d_n)^2-4}}{2},
\]
and the eigenvectors (not necessarily unit vectors) are 
\[
   \mathbf{v}_u^n= (b_n, \lambda_n -a_n), \ 
   \mathbf{v}_s^n= (b_n, 1/\lambda_n -a_n).
\]
We note, as $n\to\infty$ that 
\[
   \frac{b_n}{\lambda_n -a_n}=\frac{2b_n}{d_n-a_n+\sqrt{(a_n+d_n)^2-4}}\to 0
\]
and that 
\[
   \frac{b_n}{1/\lambda_n -a_n} \to -M,
\]
which concludes our proof.
\end{proof}

\begin{prop}\label{gcone}
There exists $\dot{f}_*\gg 1$ such that if $|\dot{f}_1|,|\dot{f}_2|>\dot{f}_*$, then $G_{12}^U$, $G_{21}^U$, $G_{12}^L$, $G_{21}^L$ share a common invariant unstable cone $\mathcal{C}_u^*=\mathcal{C}_{u,\dot{f}_*}$ with the minimal expansion rate $\lambda_*=\lambda_{\dot{f}_*}$.
\end{prop}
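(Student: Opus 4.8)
The plan is to reduce the statement to the linear-algebra facts already in place — Lemma \ref{hyperbolicmatrix} and the expansion-rate computation carried out just above — the only new ingredient being that the four matrices $DG_{12}^U$, $DG_{21}^U$, $DG_{12}^L$, $DG_{21}^L$ displayed above all degenerate \emph{in the same fashion} as $|\dot{f}_1|,|\dot{f}_2|\to\infty$. First I would make the bottom rows explicit. Near $t_1^*$ one has $l=1-f$ to the left and $l\equiv 1$ to the right (and $m=-f$, $m\equiv -1$ likewise), while near $t_2^*$ the roles are reversed, so substituting into the definitions of the constants gives
$$
\Delta_1=\frac{\dot{f}_1}{2(1-f_1)},\qquad \Delta_2=-\frac{(1-f_2)\dot{f}_2}{2},\qquad \kappa_l=\frac{f_2\dot{f}_2}{2},\qquad \chi_l=-\frac{\dot{f}_1}{2f_1}.
$$
Since $c\le f_i\le 1-c$, each of these is a fixed nonzero multiple of $\dot{f}_i$, hence blows up as $|\dot{f}_1|,|\dot{f}_2|\to\infty$; so in each matrix the two bottom entries tend to $\infty$ in absolute value. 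By contrast the top rows involve only $f_i$, $\mathcal{L}_*$, $\mathcal{M}_*$ and the arc-length factors $\theta_2^*-\theta_1^*$, $2+\theta_1^*-\theta_2^*$, $\zeta_2^*-\zeta_1^*$, $2+\zeta_1^*-\zeta_2^*$, all fixed numbers bounded and bounded away from $0$ and $\infty$ (e.g. $\mathcal{L}_*,\mathcal{M}_*\in[2,2c^{-2}]$, and the arc-lengths lie in $(0,2)$ since the slit does not span the table, so $t_1^*\ne t_2^*$). In particular the top entries are bounded and the ratio of the $(1,2)$ to the $(1,1)$ entry is a fixed nonzero constant $M$, namely $-\mathcal{L}_*(\theta_2^*-\theta_1^*)$ for $DG_{12}^U$ and $DG_{12}^L$, $-\mathcal{L}_*(2+\theta_1^*-\theta_2^*)$ for $DG_{21}^U$, and $-\mathcal{M}_*(2+\zeta_1^*-\zeta_2^*)$ for $DG_{21}^L$.

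Next I would apply Lemma \ref{hyperbolicmatrix} to each of the four matrices along an arbitrary sequence $|\dot{f}_1|,|\dot{f}_2|\to\infty$, first replacing a matrix by its negative whenever its trace tends to $-\infty$ (this flips the eigenvalues but not the eigendirections). Because $\det DG=1$ and the trace blows up, each $G$ becomes hyperbolic with unstable eigenvalue of modulus $\to\infty$, its unstable eigenvector converges to the vertical direction $(0,1)$, and its stable eigenvector converges to $(-M,1)/\sqrt{M^2+1}$ with the corresponding nonzero $M$, hence stays at a definite angular distance from the vertical. Thus there is $\eta>0$ so that, once $|\dot{f}_1|,|\dot{f}_2|$ are large, all four unstable eigenvectors lie within a small angle of the vertical (an angle $\to 0$ as $\dot{f}_*\to\infty$) while all four stable eigenvectors lie at angle $\ge\eta$ from it.

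Finally I would produce the common cone. I would let $\mathcal{C}_u^*=\mathcal{C}_{u,\dot{f}_*}$ be the thin wedge about $(0,1)$ consisting of directions whose horizontal component is at most $\dot{f}_*^{-1/2}$ times the vertical one; its opening tends to $0$ as $\dot{f}_*\to\infty$, but more slowly than the $O(\dot{f}_*^{-1})$ rate at which the unstable eigenvectors approach $(0,1)$. Hence for $\dot{f}_*$ large this cone contains all four unstable eigenvectors in its interior and excludes all four stable ones. Writing $d\varrho\in\mathcal{C}_u^*$ in the eigenbasis of a given $G$ as $d\varrho=c_u\mathbf{e}_u+c_s\mathbf{e}_s$, this geometry forces $|c_s|$ to be much smaller than $|c_u|$, so $G\,d\varrho=\lambda c_u\mathbf{e}_u+\lambda^{-1}c_s\mathbf{e}_s$ is within $O(\lambda^{-2})$ of $\mathbf{e}_u$, hence inside $\mathcal{C}_u^*$; the same estimate — this is the computation of $\lambda_k$ done just above — gives $|G\,d\varrho|\ge\lambda_*\,|d\varrho|$ with $\lambda_*$ taken to be, say, half the least of the four unstable eigenvalues, which exceeds any prescribed constant once $\dot{f}_*$ is large. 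This yields the common invariant unstable cone $\mathcal{C}_u^*$ with minimal expansion rate $\lambda_*$.

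The delicate point — and the main obstacle — is the \emph{simultaneity}: one cone must be forward-invariant under four maps whose stable directions are in general distinct. I would resolve this by playing off two competing scales in the choice of $\dot{f}_*$: the opening of $\mathcal{C}_u^*$ must shrink to $0$ (so that eventually $\mathcal{C}_u^*$ lies strictly inside the complement of every stable direction) yet remain large compared with the faster $O(\dot{f}_*^{-1})$ decay of the distance from each unstable eigenvector to the vertical (so that all four unstable directions stay strictly inside $\mathcal{C}_u^*$). Once these two rates, and the signs of $\dot{f}_1,\dot{f}_2$, are bookkept in the right order, everything else is routine.
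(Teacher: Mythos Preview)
Your proposal is correct and takes essentially the same approach as the paper's proof: both verify that the bottom rows of the four matrices blow up with $|\dot f_i|$ while the top rows stay bounded, apply Lemma~\ref{hyperbolicmatrix} to place all four unstable eigenvectors near the vertical and all four stable ones a fixed angle away, and then take a thin cone about the vertical as $\mathcal{C}_u^*$. The only differences are cosmetic --- the paper builds $\mathcal{C}_u^*$ as the intersection of four individual cones $\mathcal{C}_{u,k_i}$ rather than your explicit $\dot{f}_*^{-1/2}$-wedge, and you add the (correct) sign-flip remark for negative-trace matrices that the paper leaves tacit.
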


\begin{proof}
First we note that the top entries of the matrices $DG_{12}^U$, $DG_{21}^U$, $DG_{12}^L$, $DG_{21}^L$ are of order $1$ and that the bottom entries contain $\Delta_2$, $\Delta_1$, $\kappa_l$ and $\chi_l$ respectively. We recall that 
\[
\Delta_1=-\frac{1}{2}\frac{\dot{l}_1}{l_1}, \ \Delta_2=\frac{1}{2}l_2\dot{l}_2, \ \kappa_l=\frac{1}{2}f_2\dot{f}_2, \ \chi_l=-\frac{1}{2}\frac{\dot{f}_1}{f_1},
\]
which can be made arbitrarily large by choosing $|\dot{f}_1|,|\dot{f}_2|\gg1$. Thus by Lemma \ref{hyperbolicmatrix} we know that there exists $\dot{f}_*\gg1$ such that if $|\dot{f}_1|,|\dot{f}_2|>\dot{f}_*$ the unstable eigenvectors of the four matrices $DG_{12}^U$, $DG_{21}^U$, $DG_{12}^L$, $DG_{21}^L$ are almost vertical while the stable eigenvectors tend to $(\mathcal{L}_*(\theta_2^*-\theta_1^*),1)$, $(\mathcal{L}_*(2+\theta_1^*-\theta_2^*),1)$, $(\mathcal{L}_*(\theta_2^*-\theta_1^*),1)$ and $(\mathcal{M}_*(2+\zeta_1^*-\zeta_2^*),1)$ respectively, all of which remain positive angular distance to the vertical direction. Therefore by choosing the opening gauge $k_i$ $(i=1,2,3,4)$ of cones carefully, the four unstable cones $\mathcal{C}_{u,k_i}$ of the four maps $G_{12}^U$, $G_{21}^U$, $G_{12}^L$, $G_{21}^L$ can have a nontrivial intersection $\mathcal{C}_u^*=\cap_i \mathcal{C}_{u,k_i}$ containing the four unstable eigenvectors of the four matrices as well as the vertical direction, and $\mathcal{C}_u$ remain a positive angular distance to the four stable directions of the four maps.

We claim that this nontrivial intersection $\mathcal{C}_u^*$ is invariant under the four maps $G_{12}^U$, $G_{21}^U$, $G_{12}^L$, $G_{21}^L$.
Indeed, for any $\varrho$ in the phase cylinder and any vector $d\varrho\in\mathcal{C}_u^*(\varrho)$, any map $G$ among the four maps $G_{12}^U$, $G_{21}^U$, $G_{12}^L$, $G_{21}^L$ will map $d\varrho$ closer to the corresponding unstable eigenvector $\mathbf{e}_u$, thus still remains in $\mathcal{C}_u$ (c.f. Figure \ref{fig:cone}).

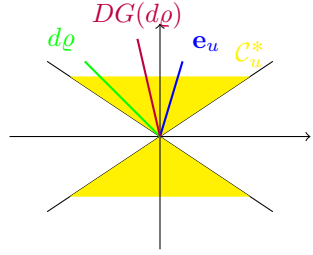
\begin{figure}[!ht]
    \centering
       \begin{tikzpicture}
          \draw[->] (-2,0)--(2,0);
          \draw (-1.5,1)--(1.5,-1)  (-1.5,-1)--(1.5,1);
          \fill[yellow] (-1.2,0.8)--(0,0)--(1.2,0.8)--cycle
                        (-1.2,-0.8)--(0,0)--(1.2,-0.8)--cycle;
          \draw[->] (0,-1.5)--(0,1.5);
          \draw[yellow] (1.2,0.8) node[anchor=south]{$\mathcal{C}_u^*$};
          \draw[blue,thick] (0,0)--(0.3,1) node[anchor=south west]{$\mathbf{e}_u$};
          \draw[thick,green] (0,0)--(-1,1) node[anchor=south east]{$d\varrho$};
          \draw[thick,purple] (0,0)--(-0.3,1.3) node[anchor=south]{$DG(d\varrho)$};
       \end{tikzpicture}
    \caption{The common unstable cone $\mathcal{C}_u^*$}
    \label{fig:cone}
\end{figure}

$\mathcal{C}_u^*$ guarantees an expansion rate at least the minimum of the four expansion rates of the four unstable cones: $\lambda_*=\min_{i}\{\lambda_{k_1},\lambda_{k_2},\lambda_{k_3},\lambda_{k_4}\}$.
\end{proof}

Finally, the Poincar\'e maps $P$ are $C^1$ perturbations of their linear parts $G$  if the initial energy are sufficiently large. The classical literature on the robustness of uniform hyperbolicity under $C^1$ perturbations (for example c.f. Appendix A in \cite{Viana97}) guarantees that the map $P$ inherits the invariant unstable cone with the desired properties in Proposition \ref{gcone}. More precisely, with an abuse of notation on the unstable cones, we have the following
\begin{prop}[Invariant Cone]\label{pcone}
Assume that $|\dot{f}_1|,|\dot{f}_2|>\f$. There exists $V_*\gg 1$ such that if the initial energy $v_0>V_*$, then $P_{12}^U$, $P_{21}^U$, $P_{12}^L$, $P_{21}^L$ share a common invariant unstable cone $\mathcal{C}_u^*$ and the minimal expansion rates for $P_{12}^U$, $P_{21}^U$, $P_{12}^L$ and $P_{21}^L$ in the cone $\mathcal{C}_u^*$ are $\lambda_{12}^U$, $\lambda_{21}^U$, $\lambda_{12}^L$ and $\lambda_{21}^L$ respectively.
\end{prop}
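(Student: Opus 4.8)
The plan is to transfer the common invariant unstable cone obtained for the linear parts in Proposition \ref{gcone} to the full Poincar\'e maps, using the fact that each $P\in\mathcal P$ is a uniformly $C^1$-small perturbation of its linear part $G$ once the initial energy is large, together with the $C^1$-robustness of cone conditions.

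First I would make the perturbative closeness quantitative and, crucially, \emph{uniform} over the non-compact phase cylinder. By the decomposition $P=G+H+\mathcal O_3$ from Section \ref{divphase} and the explicit formulas for the first-order corrections $H$ in Section \ref{normal}, every entry of $H$ is of the form $\mathcal O_3(\mathcal H^{-1})$ and the remainder is $\mathcal O_3(\mathcal H^{-2})$; since the transverse variable ($\sigma$, $\tau$, or $\rho$) ranges over a bounded interval while $\mathcal H>V_*$, the $\mathcal O_s$-convention gives $\|D_\varrho P - D_\varrho G\|\le C/V_*$ for a constant $C=C(f_1,f_2,\dot f_1,\dot f_2)$, uniformly on $\{v_0>V_*\}$. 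This is genuinely uniform because each linear part $G$ is piecewise affine on the connected components of Definition \ref{complete-crv}, so $D_\varrho G$ is piecewise constant.

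Next I would invoke the openness of cone hyperbolicity (for instance Appendix A of \cite{Viana97}). In Proposition \ref{gcone} the cone $\mathcal C_u^*$ is \emph{strictly} invariant: each of $G_{12}^U, G_{21}^U, G_{12}^L, G_{21}^L$ maps $\mathcal C_u^*\setminus\{0\}$ into the interior of $\mathcal C_u^*$ (it is pushed into a small subcone around the corresponding near-vertical unstable eigendirection, and $\mathcal C_u^*$ keeps a positive angular distance from all four stable directions), with expansion at least $\lambda_*>1$ there. Hence there is $\epsilon_0>0$, depending only on the linear data, such that every map whose derivative lies within $\epsilon_0$ of one of the four matrices $DG$ at every point also maps $\mathcal C_u^*$ into itself; enlarging $V_*$ so that $C/V_*<\epsilon_0$ shows that all six maps in $\mathcal P$ preserve $\mathcal C_u^*$. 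For the expansion, if $d\varrho\in\mathcal C_u^*(\varrho)$ then $|D_\varrho P(d\varrho)|\ge |D_\varrho G(d\varrho)| - (C/V_*)|d\varrho| \ge (\lambda_*-C/V_*)|d\varrho|>|d\varrho|$ after enlarging $V_*$ once more; taking the infimum over $\varrho$ and $d\varrho\in\mathcal C_u^*$ for each map separately defines the minimal expansion rates $\lambda_{12}^U, \lambda_{21}^U, \lambda_{12}^L, \lambda_{21}^L>1$ (recall that the long and short variants share the same linear derivative, so there is nothing extra to check for them).

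The one point requiring care is the uniformity of the derivative estimate in the unbounded $\mathcal H$-direction; this is precisely what the $\mathcal O_s$ bookkeeping in the normal forms provides, since it controls the relevant derivatives by fixed negative powers of $\mathcal H\gtrsim V_*$. Given that, the statement follows from the standard stability of hyperbolic cone families under $C^1$-perturbations, with no further hard analysis.
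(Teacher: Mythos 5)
Your proposal is correct and follows essentially the same route as the paper: both regard $P$ as a $C^1$-small perturbation of the piecewise-linear part $G$ for $v_0>V_*$ and then appeal to the $C^1$-robustness of cone hyperbolicity (via Appendix A of \cite{Viana97}). You merely make explicit what the paper leaves implicit, namely the uniformity of the $C^1$-closeness over the non-compact cylinder coming from the $\mathcal{O}_s(\mathcal{H}^{-1})$ control and the boundedness of the transverse variable, and the strict invariance of $\mathcal{C}_u^*$ from Proposition \ref{gcone} that makes the cone stable under such perturbations.
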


\begin{rmk}\label{lbd-f}
We note that there are constants $c_1,c_2>0$, independent of $\dot f_1, \dot f_2$, so that for each map $P \in \{P_{12}^U,P_{12}^{Ll},P_{12}^{Ls}\}$ we have that
$$
c_1 |\dot f_2|\leq \lambda_{P}\le\Lambda_P\leq c_2 |\dot f_2|,
$$
and respectively for every $P\in \{P_{21}^U,P_{21}^{Ll},P_{21}^{Ls}\}$
$$
c_1 |\dot f_1|\leq \lambda_{P}\le\Lambda_P\leq c_2 |\dot f_1|,
$$
where $\lambda_{P}$ and $\Lambda_P$ are the minimal and maximal expansion rates of the map $P$ in the unstable cone $\mathcal{C}_u^*$ respectively. Also we denote the maximal and minimal expansion of $\cf$ by 
\[
   \lambda_{\cf}=\min\{\lambda_{12}^U\lambda_{21}^U,\lambda_{12}^L\lambda_{21}^L\},\quad \Lambda_{\cf}=\max\{\Lambda_{12}^U\Lambda_{21}^U,\Lambda_{12}^L\Lambda_{21}^L\}.
\]
\end{rmk}

\begin{definition}[Unstable curves]\label{unstablecurve}
We say $\gamma$ is an \emph{unstable curve} if it is $C^2$, it lies above the threshold $\V$ and the slope of any point in $\gamma$ lies in the unstable cone, i.e. $\forall \varrho=(x,y) \in \gamma$, $\mathcal{J}_\gamma \varrho  \in  \mathcal{C}_{u}^*$ where $\mathcal{J}_\gamma$ is the directional derivative.
\end{definition}

\section{Auxiliary construction: The Modified System}
 
We remind the reader that the normal forms and all the estimates obtained in the previous sections are valid only for large values of $\V$. Therefore, when the energy drops below $\V$ our analysis may collapse. In this section, we modify our dynamics near the threshold $V_*$ in a way that whenever the ball's energy falls near $V_*$, we push the energy of ball up in the next period.


Now we introduce the modified system: we assume that when the energy of the particle approaches $\V$, then the particle will be forced into the lower route which will increase the energy. More precisely, we choose a number $V_0 \gg \V$ and for all $m<m_*$ (those for which $\mathcal{U}_{en,m}$ entirely lies below $V_0$) we replace the definition of $P$ by the linear parts $G$ of accelerating lower route and hence we obtain a new map $P_0$ as follows  
\begin{equation}\label{Mdf}
P_0(x,y)=\begin{cases} G_{12}^{Ls/Ll}(x,y), & (x,y)\in \mathcal{U}_{en,m}, \text{ for }m \text{,  with }\\
&\mathcal{U}_{en,m}\subset \Big(R_1\cap [0, V_0)\Big), \\ P(x,y) ,& \text{otherwise}.\end{cases}
\end{equation}

Note that $P_0$ will have the same singularity lines in $[0,V_0]$ as $P$. The corresponding connected components will still be considered parts of sets $S_{+1}$ and $S_{-1}$. Thus, although some of the connected components in $S_{-1}$ the once below $V_0$, will actually increase the energy of the ball, but for convenience we will continue to consider them as part of $S_{-1}$.

Clearly, if $V_0$ is large enough compared with $\V$ then due to the lower bound \eqref{intrmd}, the energy can not go too low from $V_0$ as it will start to increase once its below $V_0$ and it will eventually be pushed back above $V_0$ by the dynamics.

The modified map $P_0$ inherits nice properties from the original map $P$ and we summarize them in the following Proposition.

\begin{prop}\label{M0}
Assume that $\f$ is sufficiently large and we have $f_2>f_1$. Then there exist constants $\V, V_0, \ell, c$, depending on the parameters $f_1,f_2,\dot f_1,\dot f_2$, so that the modified system $P_0$, defined in \eqref{Mdf}, satisfies the properties listed below: 

1) $P_0(x,y)=P(x,y)$, for all $(x, y)\notin \Big([0,2]\times [0, V_0)\Big)$. $P_0$ is $C^2$ on $\Big([0,2]\times [\V, V_0)\Big)$, it preserves the unstable cones $\mathcal{C}_u^*$ and it has the same singularity lines and expansion rates (in $\mathcal{C}_u^*$) as $P$. In particular, Remark \ref{lbd-f} is full-filled for $P_0$.

2) $\cf\Big([0,2]\times [\V, \infty)\Big)\subset \Big([0,2]\times [\V, \infty)\Big)$

3) For every $(x_0, z_0)\in S_{+1}$ we have that
\begin{equation}\label{R10}
\ell\frac{f_2}{f_1} z_0\leq z_1
\end{equation}
and respectively for $(x_0, z_0)\in S_{-1}$
\begin{equation}\label{R20}
\ell\left(\frac{1-f_2}{1-f_1}\right) z_0 \leq z_1. 
\end{equation}
Since $f_2>f_1$, then $\ell\left(\frac{1-f_2}{1-f_1}\right) <1<\ell\frac{f_2}{f_1}$ for $\ell$ close to $1$.

4) There exists $c<\infty$ so that for all $(x_0, z_0) \in S_{+1} \cup S_{-1}$
\begin{equation}\label{bound}
\left|\frac{\ln z_n - \ln z_0}{n}\right|\leq c.
\end{equation}
\end{prop}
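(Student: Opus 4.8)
The plan is to transfer the estimates from Proposition \ref{M00}, which concern the original map $P$ above the threshold $\V$, to the modified map $P_0$, using that $P_0$ differs from $P$ only on the finitely many connected components of $\cU_{en}$ lying below the fixed level $V_0$, and that on those components $P_0$ is by construction the linear part $G_{12}^{Ls/Ll}$ of the accelerating lower route. First I would establish (1): away from the band $[0,2]\times[\V,V_0)$ the two maps literally coincide, so every structural property (the $C^2$ bound, the invariant cone $\mathcal{C}_u^*$, the singularity lines, the expansion rates, hence Remark \ref{lbd-f}) is inherited there for free; on the band, $P_0$ equals $G_{12}^{Ls/Ll}$, which is linear, manifestly $C^2$, and whose derivative is exactly the matrix $DG_{12}^L$ already shown in Proposition \ref{gcone}/\ref{pcone} to preserve $\mathcal{C}_u^*$ with expansion rate bounded below and above by the same $c_1|\dot f_2|$, $c_2|\dot f_2|$ constants — so the cone and expansion claims extend verbatim, and the singularity lines of $P_0$ in $[0,V_0]$ are those of $P$ by the remark following \eqref{Mdf}.

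Next I would prove (2). The content is that $\cf$, built from two half-revolutions of $P_0$, maps $[0,2]\times[\V,\infty)$ into itself. For components acted on by the unmodified $P$ this is already guaranteed by the normal forms being valid above $\V$ together with the lower bound \eqref{intrmd} from Proposition \ref{M00}, which shows a half-revolution cannot push the energy below $\min\{f_2/2,(1-f_2)/2\}y_0$ — and since $c\le f_2\le 1-c$, this together with a suitably large choice of $\V$ keeps us above $\V$. For the modified components, $P_0=G_{12}^{Ls/Ll}$ multiplies the energy by roughly $f_2$ up to the bounded error $D=D(\dot f_1,\dot f_2)$ exactly as in \eqref{F1}, and then the subsequent genuine half-revolution $P_{21}$ multiplies by roughly $1/f_1>1$; choosing $V_0$ large relative to $\V$ (so that even $f_2 V_0-D$, and afterwards $(1/f_1)(f_2 V_0 - D) - D$, exceed $\V$) and invoking the remark after \eqref{Mdf} that the energy, once below $V_0$, increases and is eventually pushed back above $V_0$, gives the claimed invariance.

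For (3) and (4) I would argue that the per-revolution multiplicative bounds \eqref{R1}--\eqref{R2} of Proposition \ref{M00} are \emph{unchanged} by the modification. If a point's full revolution uses only unmodified half-revolutions, the bounds are literally those of Proposition \ref{M00}. If it uses a modified half-revolution, then by construction that half-revolution is an accelerating lower-route step obeying \eqref{F1}, i.e. $f_2 y_0 - D\le y_1\le f_2 y_0 + D$, which is exactly the same linear estimate the original lower route satisfies; composing with the following $P_{21}$ step \eqref{F2} reproduces the factor $f_2/f_1$ up to an additive constant $D_0=D_0(f_1,f_2,\dot f_1,\dot f_2)$, and absorbing that constant into the factor $\ell$ (resp.\ $1/\ell$) for $\V$ large — precisely the argument yielding \eqref{R1} — gives \eqref{R10}. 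Note a point in a modified component of $S_{-1}$ now obeys the \emph{accelerating} bound, which is consistent with \eqref{R10}/\eqref{R20} since $\ell(1-f_2)/(1-f_1)<1<\ell f_2/f_1$; this only strengthens the lower bounds, so the stated inequalities hold with the same $\ell$. Iterating the universal upper bound $z_1\le (1/\ell)(f_2/f_1)z_0$ — valid for every point of $S_{+1}\cup S_{-1}$ whether or not the revolution was modified, since $f_2/f_1$ dominates both possible growth factors — gives \eqref{bound} with $c=\ln\big((1/\ell)(f_2/f_1)\big)$ exactly as in \eqref{I1}. The main obstacle, such as it is, is bookkeeping: one must choose the constants in the order $\f$ large (to get the cone), then $\V$ large (to validate the normal forms and the bounds of Proposition \ref{M00}), then $V_0\gg\V$ large enough that a modified step followed by its partner half-revolution lands safely above $\V$ and above $V_0$ again, and finally $\ell<1$ close enough to $1$ to absorb the additive errors $D,D_0$ at energies $\ge\V$ while preserving $\ell(1-f_2)/(1-f_1)<1<\ell f_2/f_1$; verifying this choice is consistent is the only real point to check.
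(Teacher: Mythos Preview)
Your proposal is correct and follows essentially the same approach as the paper: both transfer the estimates of Proposition~\ref{M00} to $P_0$ by noting that away from the band nothing changes, that on the band the linear maps $G_{12}^{Ls/Ll}$ already preserve $\mathcal{C}_u^*$ with the right expansion, that the modification only replaces a decelerating step by an accelerating one (so the lower bounds \eqref{R10}--\eqref{R20} and the invariance (2) can only improve), and that the universal upper bound iterates to give \eqref{bound}. Your write-up is more explicit about the order in which $\f,\V,V_0,\ell$ must be chosen, but the underlying argument is the paper's.
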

\begin{proof}
The first statement in 1) follows from the definition of $P_0$. The second statement in 1) follows from the fact that we have changed the definition of $P$ on entire connected component. Hence $P_0$ will still be smooth on $S_{+1}$ or $S_{-1}$ and will have the same singularity lines as $P$. Cone invariance will still hold since we have it for $P$ above $\V$ and it is immediate for the linear maps $G_{12}^{Ls/Ll}$. $P_0$ coincided with the linear parts which obviously inherit the expansion rates from the original system, i.e. we have the bounds in Remark \ref{lbd-f}.

For $S_{+1}$ and $S_{-1}$ above $V_0$, 3) and 4) follow from Proposition \ref{M00}.
Since $\ell\left(\frac{1-f_2}{1-f_1}\right)<1$ and in the fundamental components in $[0,2]\times [\V, V_0]$ we either have the lower bound $\ell\left(\frac{1-f_2}{1-f_1}\right)z_0$ or $\ell\frac{f_2}{f_1} z_0$. Hence, in both cases
 \eqref{R20}, clearly holds.

2) follows from the fact that near $\V$ the energy is only allowed to accelerate after a full revolution.
\end{proof}

\begin{definition}
We will denote by $\cf_0=P_0^{2}$ the map associated with the complete revolution.
\end{definition}

Our strategy to prove exponential acceleration for the modified system is as follows: we will increase $\dot f_*$ (recall that $|\dot f_1|, |\dot f_2|>\f$) while keeping $f_1, f_2$ fixed. As $\f$ increases we will also increase the threshold $\V$ so that the properties listed in Proposition \ref{M0} are full-filled.

Note that while  $f_1, f_2$ are fixed then the distance between singularity lines will remain the "same" up to a small distortion (see Figure \ref{fig:dividedphase}). The lines may change their angle with respect to the vertical axes, but they will always maintain a uniform angle from it due to the assumption in the main Theorem.

\section{Growth Lemmas and deviation estimates}

In this section we lay down some important outcomes of the uniform cone hyperbolicity of our system: the growth lemmas and a deviation estimate. Although our system enjoys strong uniform hyperbolicity under the assumptions in Theorem \ref{expAcc}, the singularity curves on the divided phase might cut short unstable curves as we iterate and unstable curves might become and even remain short for a long time. However, the growth lemmas, which precisely describe the size and growth of long/short unstable curves, guarantee that the situation is not so hopeless.

The Section 6 is organized as follows. In Section \ref{grwth} we prove the main growth lemmas. We introduce the notion of a long curve, which refers to curves that are of a sufficiently large size (to be defined in Def. \ref{longcurve}).
We provide explicit estimates on the size of long curves  and the constants involved in the estimates, depending on the expansion rate of the system.

In Section \ref{delay} we prove a growth lemma which has a certain delay: namely, we first iterate the unstable curve $N_0$ many times and then start measuring the size and the growth of unstable curves. The purpose of the parameter $N_0$ will be clear in Section \ref{fnt-time}.

In Section \ref{dv-est}, for each initial condition $\varrho$ in a long curve, we consider the times $\{n_k\}$ when $\cf_0^{n_k} \varrho$ again belongs to a long curve. Proposition \ref{dev-prop} provides large deviation bounds for the growth rate of these times.

We refer the reader to \cite{CM,CZ} for background and further results on hyperbolic systems with singularities, where the growth lemmas play a vital role.

\begin{prop}[Distortion Control]\label{dstr-bd}
Suppose that $I, J\subset \gamma$ are two connected components of $P_0^n \gamma$. Then there exists $K>1$, so that
$$
\frac{1}{K} \frac{|I|}{|J|}\leq \frac{|P_0^n I|}{|P_0^n J|} \leq K \frac{|I|}{|J|},
$$
and $\lim_{V_* \rightarrow \infty}K=1$, where $V_*$ is the critical threshold of starting energy and $|.|$ is the curve length.
\end{prop}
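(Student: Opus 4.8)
The plan is to establish the distortion bound via the standard ``bounded distortion'' argument for one-dimensional expanding maps with bounded nonlinearity, adapted to the fact that here the nonlinearity is controlled by the size of the energy threshold $V_*$. First I would reduce to the case $n=1$: by the chain rule the logarithm of the expansion factor along the orbit is a telescoping sum, so if we can bound $\big|\log|D_{\varrho}P_0| - \log|D_{\varrho'}P_0|\big|$ for $\varrho,\varrho'$ in the same connected component of $P_0^k\gamma$ by a constant times the distance between them (measured along the curve), then summing over $k=0,\dots,n-1$ and using the uniform expansion $\lambda_{\cf}>1$ from Proposition \ref{pcone} (which makes the distances at level $k$ decay geometrically as we go backwards) yields a bound on the total distortion that is a convergent geometric series, hence finite; this is the classical Denjoy–Schwartzian estimate. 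Concretely, for $\varrho \in I$ one writes $\log\frac{|P_0^n I|}{|I|} = \sum_{k=0}^{n-1}\log|D_{P_0^k\varrho}P_0|$ evaluated at suitable intermediate points (mean value theorem on the length integral), and compares with the same sum for $J$.

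The key inputs are: (i) each $P_0^k\gamma$ is a union of unstable curves, so by the invariant cone property their slopes stay in $\mathcal{C}_u^*$ and we may parametrize each piece by, say, the horizontal coordinate with uniformly bounded, uniformly-away-from-zero tangent; (ii) on unstable curves $P_0$ expands lengths by a factor in $[\lambda_{\cf}, \Lambda_{\cf}]$ uniformly (Remark \ref{lbd-f}), so the preimage of a connected component $I$ of $P_0^n\gamma$ at level $k$ has length at most $\Lambda_{\cf}^{-(n-k)/2}|P_0^n I|$ — more precisely the length at the intermediate levels is controlled by the final length times a geometric factor; (iii) the crucial smallness estimate, namely that $P_0 = G + H + \mathcal{O}_3$ with $H = \mathcal{O}(1/\mathcal{H})$ and the second-order term $\mathcal{O}_3(\mathcal{H}^{-2})$, so that the \emph{second} derivative of $P_0$ restricted to an unstable curve is $\mathcal{O}(1/V_*)$ while the first derivative is bounded below. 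Hence $\big|\log|D P_0|\big|$ has Lipschitz constant (along unstable curves) of order $1/V_*$, uniformly over the phase cylinder, because $G$ is linear (zero nonlinearity) and all nonlinearity comes from $H+\mathcal{O}_3$. Putting these together, the total distortion is bounded by $\exp\!\big(\frac{C}{V_*}\sum_{k\ge 0}\lambda_{\cf}^{-k/2}\,|P_0^n I|\big)$, and since $|P_0^n I|$ is at most the diameter of the phase cylinder (a fixed constant), we get $K = K(V_*)$ with $\log K = O(1/V_*) \to 0$ as $V_*\to\infty$.

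The main obstacle I anticipate is handling the singularities correctly in the telescoping argument: a connected component $I$ of $P_0^n\gamma$ is the image of a connected piece $I_0\subset\gamma$ that was \emph{not} cut by any singularity line $S_1,S_2$ during the first $n$ iterations, so along the whole backward orbit of $I$ the map is a composition of smooth branches and the chain rule applies without boundary effects — this is exactly why the statement is phrased in terms of connected components of $P_0^n\gamma$ rather than arbitrary subcurves. I would make this precise by noting that $P_0$ restricted to each $F_{P,m}$ (Definition \ref{complete-crv}) is a diffeomorphism onto its image that extends smoothly (the normal forms are smooth on the closed strips), so the intermediate pieces $P_0^k I_0$ are genuine unstable curves on which all the bounds above hold. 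A secondary technical point is verifying that the reparametrization from ``curve length'' to a coordinate is itself distortion-bounded — but since unstable curves have slopes in a fixed cone $\mathcal{C}_u^*$ and are $C^2$ with second derivative $O(1/V_*)$, the arclength and any coordinate projection differ by a factor $1+O(1/V_*)$, which is absorbed into $K$. Finally, to get the clean two-sided bound one applies the one-sided estimate to both $I/J$ and $J/I$.
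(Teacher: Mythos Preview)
Your proposal is correct and follows essentially the same route as the paper: both arguments telescope $\log$ of the Jacobian along the orbit, use the uniform expansion in the unstable cone to make the intermediate distances decay geometrically, and invoke that $P_0$ is a $C^2$-perturbation of its linear part $G$ (so the nonlinearity is $O(1/V_*)$) to conclude that the distortion constant tends to $1$ as $V_*\to\infty$. The paper phrases the telescoping via the backward Jacobian $\mathcal{J}_{\gamma_n}P_0^{-n}$ rather than forward, but this is the same estimate; your additional remarks on singularities and arclength-vs-coordinate reparametrization are valid refinements that the paper leaves implicit.
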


\begin{proof}
It suffices to show that for any $\varrho_n,\varrho'_n$ in a connected component $\tilde{\gamma}_n$ of $\gamma_n=P_0^n \gamma$, we have the following distortion control 
\[
   \left|\log\mathcal{J}_{\gamma_n}P_0^{-n}(\varrho_n) -\log\mathcal{J}_{\gamma_n}P_0^{-n}(\varrho'_n) \right| \le C|\varrho_n-\varrho'_n|,
\]
where $\mathcal{J}_{\gamma_n}P_0^{-n}(\varrho)$ means the Jacobian of $P_0^{-n}$ restricted to $\gamma_n$.

We observe that 
\begin{align*}
    &\quad \left|\log\mathcal{J}_{\gamma_n}P_0^{-n}(\varrho_n) -\log\mathcal{J}_{\gamma_n}P_0^{-n}(\varrho'_n) \right|\\
    &\le \sum_{m=0}^{n-1} \max_{\varrho_m\in\tilde{\gamma}_n} \left| \frac{d}{d\varrho_m} \log\mathcal{J}_{\gamma_m}P_0^{-1}(z_m) \right| |\varrho_m-\varrho'_m|\\
    &\le C \sum_{m=0}^{n-1} \frac{|\varrho-\varrho'|}{\Lambda_{P_0}^m}\\
    &\le C' |\varrho-\varrho'|
\end{align*}
where we have used the fact that $P_0$ has bounded derivatives. 
We also note that the constant $C'$ in the last line can be made as close to 0 (henceforth $K=e^{C'}$ as close to 1) as possible by taking the initial energy large, as $P_0$ are $C^2$ perturbations of linear maps $G$. 
\end{proof}

\subsection{Growth Lemmas}\label{grwth}

We denote the \emph{complexity} by $\kappa_n(\delta)$, i.e. the maximal number of pieces an unstable curve $\gamma$ can be cut into under the map $P_0^n \gamma$. It is easy to see from Figure \ref{fig:dividedphase} that for any $\delta_0<\min\{1/\Lambda_{12}^U,1/\Lambda_{12}^L\}$ and $\min\{\lambda_{12}^U\lambda_{21}^U,\lambda_{12}^L\lambda_{21}^L\}>4$ we have  $$\kappa_{1}(\delta_0)<\min\{\lambda_{12}^U\lambda_{21}^U,\lambda_{12}^L\lambda_{21}^L\}.$$
Indeed, for an unstable curve of size $\delta_0<\min\{1/\Lambda_{12}^U,1/\Lambda_{12}^L\}$ in $R_1$, it meets at most one singularity curve hence it gets cut into at most two pieces when mapped to $R_2$, each of size at most $\Lambda_{12}^U\delta_0$ and $\Lambda_{12}^L\delta_0$ respectively. Each of the pieces in $R_2$ meets at most one singularity curve due to our choice of $\delta_0$ hence it gets further cut into at most two pieces when mapped to $R_1$. Therefore, for such choice of $\delta_0$, the unstable curve gets cut into at most 4 pieces when finishing a complete period, and consequently if we choose $\dot{f}_*$ so large that the expansions satisfy $\min\{\lambda_{12}^U\lambda_{21}^U,\lambda_{12}^L\lambda_{21}^L\}>4$, we obtain the desired complexity control.

\begin{figure}[!ht]
    \centering
    \begin{tikzpicture}
       \draw[->] (-0.3,0) -- (2.5,0);
       \draw[->] (0,0) -- (0,3);
       \draw (0,0) node[anchor=north]{0}
             (2,0) node[anchor=north]{2};
       \draw (2,0) -- (2,2.8) node[anchor=south west]{$R_1$};
       \draw (0,0.3) -- (2,1)
             (0,0.8) -- (2,1.5)
             (0,1.3) -- (2,2)
             (0,1.8) -- (2,2.5);
       \draw[<->] (2.15,1.05) -- (2.15,1.45) node[anchor=north west]{$f_2$};
       \draw[<->] (2.15,1.55) -- (2.15,1.95) node[anchor=north west]{$2-2f_2$};
       \draw[<->] (2.15,2.05) -- (2.15,2.45) node[anchor=north west]{$f_2$};
       \draw[thick,red] (0.8,1.3) -- (1,0.8);
       
       \draw[->] (4,1.5) -- (5,1.5) node[anchor=south east]{$P_{12}^{Ll/s}$};
       
       \draw[->] (5.7,0) -- (8.5,0);
       \draw[->] (6,0) -- (6,3);
       \draw (6,0) node[anchor=north]{0}
             (7,0) node[anchor=north]{1}
             (8,0) node[anchor=north]{2};
       \draw (8,0) -- (8,2.8) node[anchor=south west]{$R_2^-$};
       \draw[dotted] (7,0) -- (7,2.5);
       \draw (6,0.3) -- (8,1)
             (6,0.8) -- (8,1.5)
             (6,1.3) -- (8,2)
             (6,1.8) -- (8,2.5);
       \draw[<->] (8.15,1.05) -- (8.15,1.45) node[anchor=north west]{1};
       \draw[<->] (8.15,1.55) -- (8.15,1.95) node[anchor=north west]{1};
       \draw[<->] (8.15,2.05) -- (8.15,2.45) node[anchor=north west]{1};
       \draw[thick,red] (6.8,1.3) -- (7,0.8)
                        (6,1.7) -- (6.3,1.2);
       
       \draw[->] (9,1.5) -- (10,1.5) node[anchor=south east]{$P_{21}^{Ll/s}$};
       
       \draw[->] (10.7,0) -- (13.5,0);
       \draw[->] (11,0) -- (11,3);
       \draw (11,0) node[anchor=north]{0}
             (11.5,0) node[anchor=north]{$f_1$}
             (12.3,0) node[anchor=north]{$2-f_1$}
             (13,0) node[anchor=north]{2};
       \draw (13,0) -- (13,2.8) node[anchor=south west]{$R_1$};
       \draw[dotted] (11.5,0) -- (11.5,2.5)  (12.3,0) -- (12.3,2.5);
       \draw[thick,red] (11,2)--(11.2,1.7) (11.3,1.4)--(11.5,1.1) (12.3,0.7)--(12.5,0.5) (12.7,1.8)--(13,1.5);
    \end{tikzpicture}
    \caption{Complexity Control}
    \label{complexity}
\end{figure}
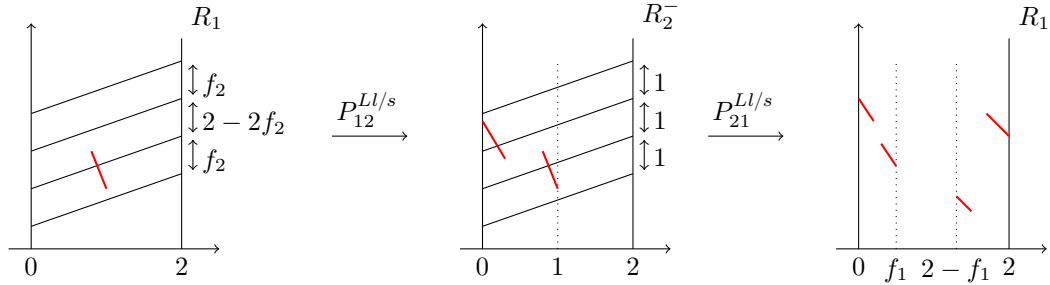

Let $\gamma$ be an unstable curve and $\varrho\in\gamma$. We denote by $r_n(\varrho)$ the distance from $\mathcal{F}_0^n(\varrho)$ to the boundary of the connected component $\gamma_n\subseteq \mathcal{F}_0^n(\gamma)$ containing $\varrho_n=\mathcal{F}_0^n(\varrho)$.

\begin{lemma}[First Growth Lemma]\label{1growthlemma}
There exist $\vartheta_1 :=\frac{\kappa_1 K^2}{\lambda_{\min}}<1$ and $C_2 := 2K^2/\delta_0(1-\vartheta_1)>0$ such that for any $\epsilon>0$
\[
  m_{\gamma}(r_n(\varrho)<\epsilon) \le \vartheta_1^n m_{\gamma}(r_0(\varrho)<\epsilon) + C_2 \epsilon |\gamma|
\]
where $m_{\gamma}$ is the Lebesgue measure restricted to $\gamma$.
\end{lemma}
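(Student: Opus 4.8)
The plan is to establish the standard one-step inequality for the measure of "short-distance-to-boundary" points and then iterate it, exploiting the complexity control $\kappa_1$ and the uniform expansion $\lambda_{\min} := \min\{\lambda_{12}^U\lambda_{21}^U, \lambda_{12}^L\lambda_{21}^L\}$ together with the distortion constant $K$ from Proposition \ref{dstr-bd}. First I would prove the base case $n=1$: given an unstable curve $\gamma$, the image $\mathcal{F}_0(\gamma)$ (where I should be careful whether $\mathcal{F}_0$ or $P_0$ is meant in the lemma statement; I will treat the revolution map and absorb the constants) splits into at most $\kappa_1$ connected components. A point $\varrho_1$ has $r_1(\varrho_1) < \epsilon$ for one of two reasons: either its $\mathcal{F}_0$-preimage $\varrho$ was already within distance $\epsilon/\lambda_{\min}$ (pulled back and distorted by at most $K$, so within $K\epsilon/\lambda_{\min}$) of a boundary point that persists, or $\varrho_1$ lies near a \emph{new} boundary point created by a singularity cut. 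The first contribution is bounded using the expansion and distortion: $m_{\mathcal{F}_0\gamma}$-measure of the bad set is at most $\kappa_1 K^2 / \lambda_{\min}$ times $m_\gamma(r_0 < \epsilon)$ — one factor $K$ for distortion in transporting the measure, one for comparing $r_0$ to its pullback, and the factor $\kappa_1$ because each of the $\kappa_1$ pieces contributes. The second (new boundary) contribution: there are at most $2\kappa_1$ new endpoints, and the $m_\gamma$-measure of the set mapped within $\epsilon$ of any one of them is, after pulling back and using distortion, at most $2K\epsilon$; since the whole curve has length $|\gamma|$ and each component has length at least $\delta_0$ (or we bound the count of pieces by $|\mathcal{F}_0\gamma|/\delta_0 \le \Lambda |\gamma|/\delta_0$), the total is at most $(2K^2/\delta_0)\,\epsilon\,|\gamma|$. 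Combining gives
\[
  m_{\gamma}\!\left(r_1 < \epsilon\right) \le \frac{\kappa_1 K^2}{\lambda_{\min}}\, m_{\gamma}\!\left(r_0 < \epsilon\right) + \frac{2K^2}{\delta_0}\,\epsilon\,|\gamma|.
\]

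Next I would iterate this estimate. Writing $\vartheta_1 = \kappa_1 K^2/\lambda_{\min}$, which is $<1$ precisely because we chose $\dot f_*$ large enough that $\lambda_{\min} > 4 \ge \kappa_1$ and $K$ close to $1$ (Proposition \ref{dstr-bd} gives $K \to 1$ as $V_* \to \infty$), and applying the one-step bound to the curve $\mathcal{F}_0^{n-1}\gamma$ in place of $\gamma$, I get $m(r_n < \epsilon) \le \vartheta_1 \, m(r_{n-1} < \epsilon) + \frac{2K^2}{\delta_0}\epsilon \sum_{\text{components}} |\cdot|$. Here I must note that the total length of all components of $\mathcal{F}_0^{n-1}\gamma$ is at least $|\gamma|$ (expansion) but I want an \emph{upper} bound; since the relevant quantity is really the sum over components of (component length), and each application of the new-boundary term is bounded by $\frac{2K^2}{\delta_0}\epsilon \cdot (\text{number of components}) \cdot (\text{but normalized by } m_\gamma)$, the cleaner bookkeeping is to carry the inequality in the form $m(r_n<\epsilon) \le \vartheta_1 m(r_{n-1}<\epsilon) + \frac{2K^2}{\delta_0}\epsilon |\gamma|$ directly, using that the new-endpoint bad set pulled all the way back to $\gamma$ has $m_\gamma$-measure $\le \frac{2K^2}{\delta_0}\epsilon |\gamma|$ uniformly in $n$ (the count of new endpoints times $K\epsilon/\lambda_{\min}^{\,?}$ pulled back, summed as a geometric series, all absorbed). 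Telescoping,
\[
  m_{\gamma}(r_n < \epsilon) \le \vartheta_1^n\, m_{\gamma}(r_0<\epsilon) + \frac{2K^2}{\delta_0}\epsilon|\gamma| \sum_{j=0}^{n-1}\vartheta_1^j \le \vartheta_1^n m_\gamma(r_0<\epsilon) + \frac{2K^2}{\delta_0(1-\vartheta_1)}\epsilon|\gamma|,
\]
which is exactly the claimed bound with $C_2 = 2K^2/(\delta_0(1-\vartheta_1))$.

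The step I expect to be the main obstacle is the careful accounting of the "new boundary" term across iterations: at step $n$ the curve $\mathcal{F}_0^{n-1}\gamma$ may already be fragmented into many pieces, and each piece spawns new endpoints, so a naive count gives a factor growing like $\kappa_1^{n}$, which would be catastrophic. The resolution — and the point that needs to be written carefully — is that new endpoints are created \emph{at step $n$ only}, so when one pulls the corresponding bad set back to $\gamma$ by $\mathcal{F}_0^{-n}$, one gains the full expansion factor $\lambda_{\min}^{-n}$ (distorted by $K^n$, still summable since $\kappa_1 K^2/\lambda_{\min} < 1$), and the number of step-$n$ new endpoints is at most the number of components of $\mathcal{F}_0^{n}\gamma$, which is at most $\kappa_1^n$ — so the product $\kappa_1^n \cdot K^n \lambda_{\min}^{-n} \cdot \epsilon = \vartheta_1^n \epsilon$ (roughly), and summing over $n$ converges. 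I would present this as: the contribution to $m_\gamma(r_n<\epsilon)$ from boundary points born at generation $j \le n$ is at most $\vartheta_1^{n-j}$ times the generation-$j$ creation measure $\le \frac{2K^2}{\delta_0}\epsilon|\gamma|$ (the $|\gamma|$ and $\delta_0$ entering via the bound "number of generation-$j$ components $\times \delta_0 \le$ length, and length of $\mathcal{F}_0^j\gamma$ transported back is $\le K^j|\gamma|$"), and the whole thing sums geometrically. Everything else — the base case, the distortion inputs, the expansion inputs — is routine given Proposition \ref{dstr-bd}, Proposition \ref{pcone} and Remark \ref{lbd-f}, and the complexity bound $\kappa_1 < \lambda_{\min}$ established just above.
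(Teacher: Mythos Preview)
Your one-step dichotomy and the telescoping are the right skeleton, but the argument has a real gap at exactly the place you flag as ``the main obstacle'', and your proposed resolution does not close it. The complexity bound $\kappa_1=\kappa_1(\delta_0)$ applies only to curves of length $\le \delta_0$; once you iterate, the connected components of $\mathcal{F}_0^n\gamma$ can be much longer than $\delta_0$, and a single long component may be cut by many singularity lines at the next step. Consequently the assertion ``the number of components of $\mathcal{F}_0^n\gamma$ is at most $\kappa_1^n$'' is false: without further control, that number grows like $|\mathcal{F}_0^n\gamma|$ divided by the transverse spacing of singularity lines, i.e.\ of order $\Lambda_{\cf}^{\,n}|\gamma|/\delta_0$. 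Plugging this into your generation-$j$ accounting gives a contribution of order $(\Lambda_{\cf}/\lambda_{\min})^n \,\epsilon|\gamma|/\delta_0$, and since $\Lambda_{\cf}>\lambda_{\min}$ in general (Remark~\ref{lbd-f} only gives $c_1|\dot f|\le\lambda\le\Lambda\le c_2|\dot f|$), this blows up rather than summing geometrically. The alternative you float, ``number of generation-$j$ components $\times\,\delta_0\le$ length'', presupposes that every component has length $\ge\delta_0$, which is exactly what fails.

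The missing device is the standard artificial chopping: at each step, after applying $\mathcal{F}_0$ and letting singularities cut, one further subdivides every component into sub-curves of length $\le\delta_0$, and proves the inequality for the modified distance $r_n^*$ to the nearest \emph{real or artificial} endpoint. This buys two things at once. First, since every step-$n$ piece has length $\le\delta_0$, the complexity bound $\kappa_1$ applies at every step, so the ``real boundary'' contribution in the one-step recursion is genuinely $\le \kappa_1\, m_\gamma(r_n^*<\epsilon/\lambda_{\min})\le \vartheta_1\, m_\gamma(r_n^*<\epsilon)$. Second, the number of artificial cuts introduced at step $n+1$ is at most $|\mathcal{F}_0^{n+1}\gamma|/\delta_0$, and pulling the corresponding $\epsilon$-neighbourhoods all the way back to $\gamma$ picks up exactly a factor $|\gamma|/|\mathcal{F}_0^{n+1}\gamma|$ (up to distortion $K$), so the additive term is $\le (2K^2/\delta_0)\,\epsilon|\gamma|$ \emph{uniformly in $n$}. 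With that uniform additive term your telescoping goes through verbatim and yields the stated $C_2$. Since $r_n\ge r_n^*$, the inequality for $r_n^*$ implies the one for $r_n$. This is precisely the route the paper takes.
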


\begin{proof}
We prove a slightly stronger result. We need to control the number of pieces while iterating an unstable curve, thus we cut a long curve into several pieces of length equal to or shorter than $\delta_0$, which comes from the complexity. We update to $r_n^*$ the distance from $\varrho_n$ to the real or artificial boundary (the latter introduced by the chopping procedure).

We claim that 
\[
   m_{\gamma}(r_{n+1}^*<\epsilon) \le \frac{\kappa_1 K^2}{\lambda_{\min}} m_{\gamma}(r_n^*<\epsilon) + \frac{2K^2\epsilon}{\delta_0} |\gamma|
\]
where $\kappa_1=\kappa_1(\delta_0)$ is the complexity and $K$ comes from the distortion in Proposition \ref{dstr-bd}.

Indeed, if $x_{n+1}$ is $\epsilon$-close to real or artificial boundary, the former contributes $\kappa_1 m_{\gamma}(r_n^*<\epsilon/\lambda_{\min})$ and the latter contributes at least $\displaystyle\left[\frac{|\gamma_{n+1}|}{\delta_0}\right]\frac{2K^2\epsilon|\gamma|}{|\gamma_{n+1}|}$. Therefore 
\begin{align*}
    m_{\gamma}(r_{n+1}^*<\epsilon) &\le \kappa_1 m_{\gamma}(r_n^*<\epsilon/\lambda_{\min}) + \left[\frac{|\gamma_{n+1}|}{\delta_0}\right]\frac{2K^2\epsilon|\gamma|}{|\gamma_{n+1}|}\\
    &\le \frac{\kappa_1 K^2}{\lambda_{\min}} m_{\gamma}(r_n^*<\epsilon) + \frac{2K^2\epsilon}{\delta_0}|\gamma|.
\end{align*}

Inductively, we have 
\[
   m_{\gamma}(r_n^*<\epsilon) \le \left(\frac{\kappa_1 K^2}{\lambda_{\min}}\right)^n (r_0^*<\epsilon) + \frac{2K^2\epsilon|\gamma|}{\delta_0} \left(1+\cdots+\left(\frac{\kappa_1 K^2}{\lambda_{\min}}\right)^{n-1}\right).
\]
We conclude the proof by taking $K$ sufficiently close to 1 such that $\vartheta_1 :=\frac{\kappa_1 K^2}{\lambda_{\min}}<1$ and $C_2 := 2K^2/\delta_0(1-\vartheta_1)$.
\end{proof}

As a consequence, after waiting long enough, we have the second growth lemma for short $\gamma$
\begin{lemma}[Second Growth Lemma]\label{2growthlemma}
For any unstable curve $\gamma$ and any $\epsilon>0$, there exists $C_3$ such that for $n>\log|\gamma|/\log\vartheta_1$  
\[
   m_{\gamma}(r_n<\epsilon)\le C_3 \epsilon |\gamma|.
\]
\end{lemma}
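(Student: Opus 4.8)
The plan is to obtain this as an immediate corollary of the First Growth Lemma (Lemma \ref{1growthlemma}). That lemma already supplies geometric contraction, at rate $\vartheta_1$, of the measure of points lying near the boundary of their connected component, and the threshold $n>\log|\gamma|/\log\vartheta_1$ is tuned precisely so that the contracting term drops below the target order $\epsilon|\gamma|$. Indeed, Lemma \ref{1growthlemma} reads
\[
  m_\gamma(r_n<\epsilon)\le \vartheta_1^n\, m_\gamma(r_0<\epsilon)+C_2\,\epsilon\,|\gamma|,
\]
so it is enough to control the first summand on the right.

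The first ingredient is the trivial bound $m_\gamma(r_0<\epsilon)\le 2\epsilon$: at time $0$ the curve $\gamma$ is a single connected component, hence $\{r_0<\epsilon\}$ is contained in the union of the two sub-arcs of length $\epsilon$ abutting the endpoints of $\gamma$ (and of course $m_\gamma(r_0<\epsilon)\le|\gamma|$ as well). The second ingredient is to unwind the hypothesis on $n$: since $0<\vartheta_1<1$ we have $\log\vartheta_1<0$, so multiplying $n>\log|\gamma|/\log\vartheta_1$ through by $\log\vartheta_1$ reverses the inequality into $n\log\vartheta_1<\log|\gamma|$, that is, $\vartheta_1^n<|\gamma|$. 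Putting the two ingredients together,
\[
  \vartheta_1^n\, m_\gamma(r_0<\epsilon)\le 2\epsilon\,\vartheta_1^n< 2\epsilon\,|\gamma|,
\]
and therefore $m_\gamma(r_n<\epsilon)< (2+C_2)\,\epsilon\,|\gamma|$, which is exactly the assertion with $C_3:=2+C_2$.

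I do not expect any genuine obstacle here. The single conceptual input — exponential decay of the near-boundary set — is furnished in full by Lemma \ref{1growthlemma}, and the resulting constant $C_3=2+C_2$ depends only on the hyperbolicity data (the minimal expansion $\lambda_{\min}$, the one-step complexity $\kappa_1$, and the distortion constant $K$ from Proposition \ref{dstr-bd}), hence is uniform in $\gamma$, $\epsilon$ and $n$. The only mild care required is the bookkeeping in the two ingredients above and, if one wishes to be completely careful, separating off the uninteresting regime in which $\epsilon$ or $|\gamma|$ is not small, where the conclusion follows already from the crude bound $m_\gamma(r_n<\epsilon)\le|\gamma|$.
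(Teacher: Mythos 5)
Your proof is correct and follows essentially the same route as the paper's: apply the First Growth Lemma, bound $m_\gamma(r_0<\epsilon)\le 2\epsilon$ since $\gamma$ is a single arc, and use the hypothesis on $n$ to get $\vartheta_1^n<|\gamma|$. (Your constant $C_3=2+C_2$ is in fact the careful one; the paper writes $C_3=1+C_2$, which appears to drop the factor $2$ from the two endpoints.)
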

\begin{proof}
It is easy to see that $\vartheta_1^n<|\gamma|$ for $n>\log|\gamma|/\log\vartheta_1$.

Also, $m_{\gamma}(r_n<\epsilon)\le\min\{2\epsilon,|\gamma|\}$. 
Thus the second growth lemma follows from the first growth lemma with $C_3:=1+C_2$.
\end{proof}

Finally we prove that a short piece from an unstable curve cannot remain too short for a long time.
\begin{lemma}[Third Growth Lemma]\label{3growthlemma}
There exist $\epsilon_0,b_1,\vartheta_2<1$ such that for any $k\in\mathbb{N}$ $$\min\{\vartheta_1^{k -1},\epsilon_0\}\le b_1\vartheta_2^{k}$$ and that $\vartheta_3:=\vartheta_2(1+C_3 Kb_1)<1$.
Then for any $n_2>n_1>\log|\gamma|/\log\vartheta_1$ 
\[
   m_{\gamma}\left(\max_{n_1\le n\le n_2} r_n(\varrho)<\epsilon_0\right) \le \vartheta_3^{n_2 - n_1} |\gamma|.
\]
\end{lemma}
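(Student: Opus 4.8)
The plan is to bootstrap from the Second Growth Lemma by iterating it over consecutive time steps and chaining the resulting inequalities. First I would fix the notation: for $n\ge n_1$ let $B_n=\{\varrho\in\gamma:\max_{n_1\le m\le n} r_m(\varrho)<\epsilon_0\}$, so that $B_{n_1}\supseteq B_{n_1+1}\supseteq\cdots$ and the quantity we must bound is $m_\gamma(B_{n_2})$. The key observation is that a point in $B_{n}$ at time $n_1$ already lies in a short connected component of $\mathcal F_0^{n_1}\gamma$ of length at most $\epsilon_0$ (since its distance to the component boundary is $<\epsilon_0$, but more to the point, the chopping/complexity argument keeps the relevant piece short), so on each such piece we may apply the Second Growth Lemma \emph{with the short piece playing the role of $\gamma$}. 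Concretely, for a short unstable curve $W$ with $|W|\le\epsilon_0$ and any $j$ large enough, Lemma \ref{2growthlemma} gives $m_W(r_j<\epsilon)\le C_3\epsilon|W|$; taking $\epsilon=\epsilon_0$ and recalling $|W|\le\epsilon_0$, and using distortion control (Proposition \ref{dstr-bd}) to pass between $W$ and its image, yields a one-step contraction of the form $m_W(r_{j+1}<\epsilon_0)\le C_3 K\epsilon_0 |W|$, hence after renormalizing the fraction of $W$ that stays short shrinks by a factor at most $\vartheta_2(1+C_3Kb_1)=\vartheta_3<1$.

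The main technical point is the first step, $n=n_1$: we need the pieces of $\mathcal F_0^{n_1}\gamma$ that contain points of $B_{n_1}$ to be genuinely short, of size controlled by $\min\{\vartheta_1^{n_1-1},\epsilon_0\}\le b_1\vartheta_2^{n_1}$. This is exactly where the hypothesis $n_1>\log|\gamma|/\log\vartheta_1$ and the comparison inequality $\min\{\vartheta_1^{k-1},\epsilon_0\}\le b_1\vartheta_2^{k}$ are used: after $n_1$ iterations the First Growth Lemma (or rather the geometric decay $\vartheta_1^{n_1}<|\gamma|$ underlying the Second) forces the curve to have been fragmented into pieces whose lengths are dominated by $\min\{\vartheta_1^{n_1-1},\epsilon_0\}$, so we may write $m_\gamma(B_{n_1+1})\le \vartheta_2(1+C_3Kb_1)\,|\gamma|=\vartheta_3|\gamma|$ after the first step, and then each subsequent step among $n_1+1,\dots,n_2$ contributes another factor $\vartheta_3$ by the short-curve contraction above. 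Iterating $n_2-n_1$ times gives
\[
   m_\gamma\!\left(\max_{n_1\le n\le n_2} r_n(\varrho)<\epsilon_0\right)\le \vartheta_3^{\,n_2-n_1}|\gamma|.
\]

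The main obstacle I anticipate is bookkeeping the \emph{union over connected components}: at each step the short piece splits (complexity $\kappa_1$), so one must verify that summing the Second-Growth-Lemma bound over all short children still produces a single factor $\vartheta_3<1$ rather than $\kappa_1\vartheta_3$. This is handled by noting that the contraction in Lemma \ref{2growthlemma} is stated in terms of $m_W$ against $\epsilon|W|$ (not against the number of pieces), so summing $\sum_i m_{W_i}(r<\epsilon_0)\le C_3 K\epsilon_0\sum_i|W_i|\le C_3K\epsilon_0|\gamma_{\text{prev}}|$ telescopes cleanly; the complexity constant $\kappa_1$ was already absorbed into $\vartheta_1$ in the First Growth Lemma, and the definition of $\vartheta_2,b_1$ via $\min\{\vartheta_1^{k-1},\epsilon_0\}\le b_1\vartheta_2^k$ is precisely engineered so that $\vartheta_3=\vartheta_2(1+C_3Kb_1)<1$ can be arranged by first choosing $\epsilon_0$ (hence $b_1$) small and then, if needed, taking $V_*$ large so that $K$ is close to $1$. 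A secondary subtlety is that some iterates may land in the modified region $[0,2]\times[\V,V_0)$ where $P_0\neq P$; but by Proposition \ref{M0}(1) the modified map has the same singularity lines and expansion rates and preserves $\mathcal C_u^*$, so all the growth-lemma machinery applies verbatim to $\mathcal F_0$.
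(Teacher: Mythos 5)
Your proposal has a genuine gap and does not take the same route as the paper.

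The central problem is that you try to extract a \emph{per-step} contraction from the Second Growth Lemma, but Lemma \ref{2growthlemma} only applies after a waiting period $n>\log|W|/\log\vartheta_1$ that \emph{grows without bound} as the curve $W$ shortens. After $n_1$ iterations the pieces of $\mathcal F_0^{n_1}\gamma$ that contain points of $B_{n_1}$ can have lengths of order $\vartheta_1^{n_1}$, far smaller than $\epsilon_0$; for such a piece you must iterate roughly $n_1$ additional times before the Second Growth Lemma gives $m_W(r_j<\epsilon_0)\le C_3\epsilon_0|W|$. So the step
$$m_\gamma(B_{n_1+1})\le\vartheta_3|\gamma|\quad\text{``and each subsequent step contributes another factor }\vartheta_3\text{''}$$
cannot be justified: there is no mechanism producing a factor $<1$ from a single application of $\mathcal F_0$ to a curve whose length you do not control from below. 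Your appeal to ``any $j$ large enough'' in the one place you invoke the Second Growth Lemma is precisely the point where the argument silently requires a variable, possibly long, waiting time — and you then immediately discard it by claiming a single-step contraction.

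The paper's actual proof handles exactly this obstacle by introducing, for each piece $\gamma_{i,j}$ produced at round $i$, a \emph{piece-dependent waiting time} $k_i=[\log|\gamma_{i,j}|/\log\vartheta_1]+1$; the inequality $\min\{\vartheta_1^{k-1},\epsilon_0\}\le b_1\vartheta_2^k$ is designed so that both the ``short-wait'' bound $C_3\epsilon_0|\gamma|$ and the ``long-wait'' bound $C_3\vartheta_1^{k-1}|\gamma|$ are dominated by a single geometric quantity $C_3 b_1\vartheta_2^{k}|\gamma|$. The final estimate then requires summing over all compositions $k_1+\dots+k_m=n_2-n_1$, producing the binomial coefficient $\binom{n_2-n_1-1}{m-1}$, and it is the identity $\sum_m\binom{n_2-n_1-1}{m-1}(C_3K^2b_1)^m\le(1+C_3K^2b_1)^{n_2-n_1-1}C_3K^2b_1$ that yields the factor $\vartheta_3^{n_2-n_1}$ with $\vartheta_3=\vartheta_2(1+C_3K^2b_1)$. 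This combinatorial bookkeeping is entirely absent from your proposal and is not a detail that can be patched over: it is the reason the constant $\vartheta_2$ (which you never actually use in your estimate, despite its appearing in the target $\vartheta_3$) enters the argument at all. Without the variable waiting times and the sum over compositions, the stated bound does not follow.
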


\begin{proof}
We define a descending sequence of ``unlucky'' sub-curves $\tilde{\gamma}_i$ on $\gamma$ as follows.

First, we define  $$\tilde{\gamma}_1=\{\varrho\in\gamma:r_{n_1}(\varrho)<\epsilon_0\}.$$
$\mathcal{F}_0^{n_1}\tilde{\gamma}_1 =\cup_j \gamma_{1,j}$ consists of finitely many pieces and each piece has length $|\gamma_{1,j}|<2\epsilon_0$.

Then for any $\varrho\in\tilde{\gamma}_{1,j}:=\mathcal{F}_0^{-n_1}(\gamma_{1,j})\subseteq\gamma$, we define $k_1(\varrho):=[\log|\gamma_{1,j}|/\log\vartheta_1]+1$. If $n_1+k_1\ge n_2$ on some piece $\tilde{\gamma}_{1,j}$, then we update the definition to $k_1:=n_2 - n_1$ on that piece. We also introduce the time counters $t_1:=n_1$, $t_2(\varrho):=t_1+k_1(\varrho)$ for any $\varrho\in\tilde{\gamma}_{1,j}$.

$k_1$ is a piecewise constant function on $\tilde{\gamma}_1$. Fix $k_1\in\mathbb{N}$ and consider all those pieces $\tilde{\gamma}_{1,j}$ with $k_1(\varrho)=k_1$, then by the second Growth Lemma $$m_{\gamma}(k_1(\varrho)=k_1)\le C_3 \epsilon_0 |\gamma|.$$
Meanwhile by the definition of $k_1$ we have that $|\gamma_{1,j}|<\vartheta_1^{k_1 -1}$, so by the second Growth Lemma $$m_{\gamma}(k_1(\varrho)=k_1)\le C_3 \vartheta_1^{k_1 -1} |\gamma|.$$

We choose $b_1,\vartheta_2\ll 1$ (to be specified later) so small that for all $k_1\in\mathbb{N}$ $$\min\{\vartheta_1^{k_1 -1},\epsilon_0\}\le b_1\vartheta_2^{k_1}.$$
Then $$m_{\gamma}(k_1(\varrho)=k_1)\le C_3 b_1\vartheta_2^{k_1} |\gamma|.$$

We continue the process of going forward by $[\log|\gamma_{1,j}|/\log\vartheta_1]+1$ steps each time and inductively we define a descending sequence of sub-curves $\tilde{\gamma}_{i,j}\subseteq\gamma$ and the functions $k_i,t_i$ on each piece $\tilde{\gamma}_{i,j}$. Also on each piece $\gamma_{i,j}\subseteq \mathcal{F}_0^{t_i}(\gamma)$ we have $$m_{\gamma_{i,j}}(k_{i+1}(\varrho)=k_{i+1})\le C_3 b_1\vartheta_2^{k_{i+1}} |\gamma_{i,j}|$$
Pulling back this estimate to $\gamma$ and considering distortion, on each piece $\tilde{\gamma}_{i,j}$ 
$$m_{\tilde{\gamma}_{i,j}}(k_{i+1}(\varrho)=k_{i+1})\le C_3 K^2 b_1\vartheta_2^{k_{i+1}} |\tilde{\gamma}_{i,j}|$$
where $K$ comes from the distortion control in Proposition \ref{dstr-bd}.

Next we define $$\tilde{\gamma}_{k_1,\cdots,k_i}:=\{\varrho\in\gamma: k_1(\varrho)=k_1,\cdots,k_i(\varrho)=k_i\}.$$
Then by the above estimate 
$$m_{\gamma}(\tilde{\gamma}_{k_1,\cdots,k_i} \cap \{k_{i+1}(\varrho)=k_i+1\}) \le C_3 K^2 b_1 \vartheta_2^{k_{i+1}}.$$

Finally we fix a sequence of natural numbers $k_1,\cdots,k_m$ with $k_1+\cdots+k_m=n_2-n_1$. Then 
\begin{align*}
  &\quad m_{\gamma}(\tilde{\gamma}_{k_1,\cdots,k_i}) \\
  &= |\gamma| \frac{m_{\gamma}(k_1(\varrho)=k_1)}{|\gamma|} \frac{m_{\gamma}(\tilde{\gamma}_{k_1}\cap\{k_2(\varrho)=k_2\})}{|\tilde{\gamma}_{k_1}|} \cdots \frac{m_{\gamma}(\tilde{\gamma}_{k_1,\cdots,k_{m-1}}\cap\{k_m(\varrho)=k_m\})}{|\tilde{\gamma}_{k_1,\cdots,k_{m-1}}|}\\
  &\le |\gamma| C_3 K^2b_1 \vartheta^{k_1} \cdot C_3 K^2b_1 \vartheta^{k_2} \cdot \cdots \cdot C_3 K^2b_1 \vartheta^{k_m}\\
  &\le (C_3 K^2b_1)^m \vartheta_2^{n_2-n_1} |\gamma|
\end{align*}

Now summing over all such possible sequences of natural numbers, we obtain 
\begin{align*}
    m_{\gamma}\left(\max_{n_1\le n\le n_2} r_n(\varrho)<\epsilon_0\right)
    &\le \sum_{m=1}^{n_2-n_1} \binom{n_2-n_1-1}{m-1}(C_3 K^2b_1)^m \vartheta_2^{n_2-n_1} |\gamma|\\
    &\le (1+C_3 K^2b_1)^{n_2-n_1-1} C_3 K^2b_1\vartheta_2^{n_2-n_1} |\gamma|\\
    &\le (\vartheta_2(1+C_3 K^2b_1))^{n_2-n_1} |\gamma|.
\end{align*}

By the definition $\vartheta_3:=\vartheta_2(1+C_3 K^2b_1)$. Now we claim that we can choose $\epsilon_0,b_1,\vartheta_2$ such that $\vartheta_3<1$ and that finishes the proof.

Indeed, we fix some large $k_*\in\mathbb{N}$. We take $\vartheta_2=\vartheta_1^{1/2}$ and $b_1=\vartheta_1^{k/2-1}$. Then for $k_1\ge k_*$ $\vartheta_1^{k_1-1}\le b_1\vartheta_2^{k_1}$ for such choice of $b_1,\vartheta_2$. Next we take $\epsilon_0\le\vartheta_1^{k_*-1}$. Then for all $k_1\in\mathbb{N}$ $$\min\{\vartheta_1^{k_1 -1},\epsilon_0\}\le b_1\vartheta_2^{k_1}.$$
For $k_*$ sufficiently large, $\vartheta_2(1+C_3 K^2b_1)<1$.
\end{proof}

\begin{rmk}\label{main-rmrk}
In fact, we may take $k_*=2$ so that $\vartheta_2=\vartheta_1^{1/2}$, $b_1=1$ and $\epsilon_0\le\vartheta_1$, and Lemma \ref{3growthlemma} holds for such choice parameters as long as we the minimal expansion $\lambda_{\min}\gg1$ is sufficiently large, which can be achieved by choosing $\dot{f}_*$ large. 
\end{rmk}

\begin{definition}[Long curve]\label{longcurve}
An unstable curve $\gamma$ is called \emph{long} if it has size $\vartheta_1/2\leq |\gamma|\leq\vartheta_1$.
\end{definition}

For $\varrho \in \gamma$ we denote by $\bar N(\varrho, \gamma)$ the first time $t=t(\varrho) > 1$ when $\cf_0^tx$ enters a long curve.

\begin{lemma}[Quantitative growth lemma]\label{firsthittime}
Let $\gamma$ be a long curve. Then there exists $b_2>0$ and $\vartheta_4<1$ so that
$$
m_\gamma\Big(\varrho:\bar N(\varrho, \gamma)=N\Big)\leq b_2 \vartheta_4^N |\gamma|.
$$
\end{lemma}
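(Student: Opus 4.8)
The plan is to decompose the event $\{\bar N(\varrho,\gamma)=N\}$ according to the past itinerary of $\varrho$ and then sum a geometric series coming from the growth lemmas. Since $\gamma$ is long, iterate $\cf_0$ once. By the complexity control and the cone expansion, $\cf_0\gamma$ is a union of finitely many unstable curves, and any sub-curve of $\cf_0\gamma$ that is not yet long has length $<\vartheta_1/2$ (otherwise it would already be long, and then $\bar N=1$ on its preimage). So after the first step the points with $\bar N>1$ lie in short pieces, and from there the mechanism is: a short piece either grows past the long threshold at some step, or it gets cut by a singularity curve and stays short. The key point is that once a piece becomes short, the Third Growth Lemma (Lemma \ref{3growthlemma}) says it cannot stay uniformly short for long; more precisely, the event that $r_n(\varrho)<\epsilon_0$ for all $n$ in a window of length $\ell$ has $m_\gamma$-measure at most $\vartheta_3^\ell|\gamma|$.

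First I would observe that if $\bar N(\varrho,\gamma)=N$, then in particular the connected component of $\cf_0^n\gamma$ containing $\cf_0^n\varrho$ fails to be long for every $n\le N-1$; since long means length in $[\vartheta_1/2,\vartheta_1]$ and pieces can only be shortened by singularity cuts (never lengthened beyond the expansion), failing to be long for all these steps forces the piece to be \emph{short} (length $<\vartheta_1/2$, hence in particular $r_n(\varrho)<\vartheta_1/2$ up to a bounded factor) for all $n$ in the window $[n_0,N-1]$, where $n_0$ is a fixed number of steps (depending only on $\lambda_{\min}$) needed for the short pieces created by the first cut to fall below $\epsilon_0$. Indeed, a piece of length $<\vartheta_1/2$ either grows to long size within $\lceil \log(\epsilon_0^{-1})/\log\lambda_{\min}\rceil$ further steps, contradicting $\bar N=N$, or it gets cut again; but a piece of length $<\epsilon_0$ that does not become long must satisfy $r_n<\epsilon_0$. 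Hence, choosing $n_0$ large enough (depending only on the expansion rate) so that $\vartheta_1\lambda_{\min}^{-(n_0-1)}<\epsilon_0$, we get the inclusion
$$
\{\varrho\in\gamma:\bar N(\varrho,\gamma)=N\}\subseteq \{\varrho\in\gamma: r_n(\varrho)<\epsilon_0 \text{ for all } n_0\le n\le N-1\}
$$
for $N>n_0$. Also $n_0>\log|\gamma|/\log\vartheta_1$ automatically since $|\gamma|\ge\vartheta_1/2$ makes the right-hand side bounded, so Lemma \ref{3growthlemma} applies with $n_1=n_0$, $n_2=N-1$, giving
$$
m_\gamma\big(\bar N(\varrho,\gamma)=N\big)\le \vartheta_3^{\,N-1-n_0}|\gamma|.
$$
Setting $\vartheta_4:=\vartheta_3<1$ and $b_2:=\vartheta_3^{-1-n_0}$ (and enlarging $b_2$ to absorb the finitely many cases $N\le n_0$, where one uses the trivial bound $m_\gamma(\cdot)\le|\gamma|$ together with $b_2\vartheta_4^N\ge|\gamma|/|\gamma|\cdot\vartheta_4^{n_0}b_2\ge 1$) finishes the proof.

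The step I expect to be the main obstacle is the bookkeeping that converts "the component is never long" into "$r_n<\epsilon_0$ on a long window", i.e. ruling out the scenario where a piece hovers at an intermediate length $\epsilon_0\le|\gamma_n|<\vartheta_1/2$ forever without ever crossing either threshold. This cannot happen because expansion by $\lambda_{\min}>1$ forces any piece of length $\ge\epsilon_0$ that survives without being cut to reach length $\vartheta_1/2$ in a bounded number of steps; so the only way to stay in the intermediate band is to be cut, and each cut with the resulting piece still $\ge\epsilon_0$ can be charged to the complexity bound $\kappa_1$, while cuts producing pieces $<\epsilon_0$ land us in the regime controlled by Lemma \ref{3growthlemma}. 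Making this dichotomy quantitative — essentially re-running the inductive "unlucky sub-curve" argument from the proof of Lemma \ref{3growthlemma} but with the stopping threshold $\vartheta_1/2$ instead of $\epsilon_0$ — is the only genuinely delicate point; everything else is geometric series summation with the constants $\vartheta_1,\vartheta_3,K$ already in hand.
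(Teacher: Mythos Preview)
Your overall route matches the paper's --- reduce to the Third Growth Lemma --- but the paper's execution is much more direct. It simply applies Lemma~\ref{3growthlemma} with the choice $\epsilon_0=|\gamma|/2$, which is admissible by Remark~\ref{main-rmrk} since $|\gamma|/2\le\vartheta_1/2\le\vartheta_1$. Under the standing chopping convention (pieces exceeding $\vartheta_1$ are partitioned into long curves), any component that is not long has length $<\vartheta_1/2$, so $r_n<\vartheta_1/4\le|\gamma|/2$ automatically. Hence the inclusion $\{\bar N(\varrho,\gamma)=N\}\subseteq\{\max_{n_\gamma\le n\le N}r_n(\varrho)<|\gamma|/2\}$ is immediate, and Lemma~\ref{3growthlemma} yields the bound $\vartheta_3^{N-n_\gamma}|\gamma|$. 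The finitely many small values $N\le n_\gamma$ (here $n_\gamma$ is at most $2$ since $|\gamma|\ge\vartheta_1/2$) are handled by the First Growth Lemma, and $b_2,\vartheta_4$ are chosen to cover both regimes.

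Your ``main obstacle'' --- components hovering in the band $[\epsilon_0,\vartheta_1/2)$ --- is a phantom created by treating $\epsilon_0$ as a fixed small constant rather than choosing it to be $|\gamma|/2$. There is also a concrete confusion in your introduction of $n_0$: you describe it as the number of steps ``needed for the short pieces \ldots\ to fall below $\epsilon_0$'' and impose $\vartheta_1\lambda_{\min}^{-(n_0-1)}<\epsilon_0$, but pieces \emph{expand} under $\cf_0$, so waiting never shrinks them; this step does not do what you claim. What you propose in your final sentence --- re-running the unlucky-sub-curve argument with threshold $\vartheta_1/2$ instead of $\epsilon_0$ --- is in fact the correct move, and it is not a re-run at all: it is just a single direct application of Lemma~\ref{3growthlemma} with $\epsilon_0$ chosen at the scale of $|\gamma|$.
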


\begin{proof}
We denote by $n_{\gamma}=\left[ \frac{\log|\gamma|}{\log\vartheta_1} \right] +1$. It follows from Lemma \ref{3growthlemma}, by taking $\epsilon_0=|\gamma|/2$, that for $N>n_{\gamma}$ 
\[
    m_{\gamma}\left(\max_{n_{\gamma}\le n\le N} r_n(\varrho)<|\gamma|/2\right) \le \vartheta_3^{N - n_{\gamma}} |\gamma|.
\]
For $N\le n_{\gamma}$, it follows from Lemma \ref{1growthlemma}, by taking $\epsilon=|\gamma|/2$, that 
\begin{align*}
    m_{\gamma}(r_N(\varrho)<|\gamma|/2) 
    &\le \vartheta_1^N m_{\gamma}(r_0(\varrho)<|\gamma|/2) + C_2 |\gamma|/2 |\gamma|\\
    &\le \vartheta_1^N |\gamma| + C_2 |\gamma|/2 |\gamma|\\
    &\le \left( \vartheta_1^N +\frac{C_2|\gamma|}{2} \right) |\gamma|
\end{align*}
Now we choose $\vartheta_4\in(\vartheta_3,1)$ such that $\vartheta_3^{N-n_{\gamma}}\le\vartheta_4^N$ for all $N>n_{\gamma}$. Then we choose $$b_2:=\max_{2\le N\le n_{\gamma}} \left\{\frac{\vartheta_1^N+C_2|\gamma|/2}{\vartheta_4^N} \right\}.$$
Therefore we conclude that for all $N>1$ we have 
\[
   m_\gamma\Big(\varrho:\bar N(\varrho, \gamma)=N\Big)\leq b_2 \vartheta_4^N |\gamma|.
\]
\end{proof}


\subsection{A growth Lemma with a delay}\label{delay}
We now let the dynamics run for $N_0$ many times and then start measuring long/short curves.
Let $\gamma$ be a long curve. 
Fix some $N_{0}\geq 1$ and define a map $\widehat{N}(\varrho): \gamma \rightarrow \mathbb{N}$
as follows: for each $\varrho \in \gamma$,  let $\widehat{N}(\varrho)=N_{0}+ \bar{N}\left(\cf_0^{N_0}\varrho, \gamma'\right)$, where
$\gamma'$ is the unstable curve that contains $\cf_0^{N_0}\varrho$. 
We have the following Lemma:

\begin{lemma}\label{pr-delayed}
For all $\ell>N_0$ and every long curve  $\gamma$ (see Def. \ref{longcurve}) we have that
$$
m_\gamma(\varrho: \hat N(\varrho)=\ell)\leq b \vartheta_4^{\ell-N_0}|\gamma|,
$$
Moreover, for $\f$ large enough, the constants $b,\vartheta_4$ are independent of $\dot f_1,\dot f_2$, with $|\dot f_1|,|\dot f_2|\geq \f$.
\end{lemma}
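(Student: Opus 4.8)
The plan is to reduce Lemma~\ref{pr-delayed} to the quantitative growth Lemma~\ref{firsthittime} by partitioning $\gamma$ according to which long curve contains the $N_0$-th iterate and then summing. First I would iterate $\gamma$ forward $N_0$ times: by complexity control (at most $\kappa_1$ pieces per period) the image $\cf_0^{N_0}\gamma$ consists of at most $\kappa_1^{N_0}$ connected components, each an unstable curve. Each component may be short or long, but in all cases it is contained in (or can be chopped into) pieces each of which lies on a single long curve $\gamma'$ or is a sub-curve of a long curve; write $\gamma=\bigsqcup_j \tilde\gamma_j$ for the corresponding partition of the domain, where $\cf_0^{N_0}$ maps $\tilde\gamma_j$ into an unstable curve $\gamma_j'$ of size $\le \vartheta_1$. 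For $\varrho\in\tilde\gamma_j$ we then have $\widehat N(\varrho)=N_0+\bar N(\cf_0^{N_0}\varrho,\gamma_j')$.

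Next I would apply Lemma~\ref{firsthittime} on each $\gamma_j'$. Strictly, that lemma is stated for long curves (size between $\vartheta_1/2$ and $\vartheta_1$), so I would first extend it to arbitrary unstable curves of size $\le\vartheta_1$: if $\gamma_j'$ is short, it is still true by the First and Third Growth Lemmas (Lemmas~\ref{1growthlemma}, \ref{3growthlemma}) that $m_{\gamma_j'}(\bar N(\cdot,\gamma_j')=N)\le b_2'\vartheta_4^N |\gamma_j'|$ after waiting the (short) time $n_{\gamma_j'}\le$ const; the only change is that the constant $b_2$ may absorb a harmless finite factor because the initial waiting window $n_{\gamma_j'}$ is now bounded by a constant depending only on $\vartheta_1$, not on $|\gamma_j'|$. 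Granting this, for each $j$ and each $\ell>N_0$,
\[
m_{\tilde\gamma_j}\bigl(\varrho:\widehat N(\varrho)=\ell\bigr)
= m_{\tilde\gamma_j}\bigl(\varrho: \bar N(\cf_0^{N_0}\varrho,\gamma_j')=\ell-N_0\bigr)
\le K\, m_{\gamma_j'}\bigl(\bar N(\cdot,\gamma_j')=\ell-N_0\bigr)
\le K b_2 \vartheta_4^{\ell-N_0} |\gamma_j'|,
\]
where the factor $K$ comes from the distortion control in Proposition~\ref{dstr-bd} when pulling back from $\gamma_j'$ to $\tilde\gamma_j$. Summing over $j$ and using $\sum_j |\gamma_j'| = |\cf_0^{N_0}\gamma| \le \Lambda_{\cf}^{N_0}|\gamma|$... but this grows with $N_0$, which is bad; instead I would pull the estimate all the way back, using distortion again, so that $\sum_j m_{\tilde\gamma_j}(\cdots) \le K^2 b_2\vartheta_4^{\ell-N_0}\sum_j |\tilde\gamma_j| = K^2 b_2 \vartheta_4^{\ell-N_0}|\gamma|$. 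Setting $b:=K^2 b_2$ gives the claim.

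The independence of $b,\vartheta_4$ from $\dot f_1,\dot f_2$ (for $|\dot f_i|\ge\f$) is where I would be most careful, and I expect it to be the main obstacle. The point is that $\vartheta_4$ was built from $\vartheta_1=\kappa_1 K^2/\lambda_{\min}$, $\vartheta_2=\vartheta_1^{1/2}$, $\vartheta_3=\vartheta_2(1+C_3K^2 b_1)$, and by Remark~\ref{main-rmrk} one may take $b_1=1$, $\epsilon_0\le\vartheta_1$ once $\lambda_{\min}\gg1$; increasing $\f$ only increases $\lambda_{\min}$ (by Remark~\ref{lbd-f}, $\lambda_{\min}\ge c_1\min(|\dot f_1|,|\dot f_2|)$) and drives $K\to1$, so $\vartheta_1,\vartheta_3,\vartheta_4$ can be taken bounded away from $1$ \emph{uniformly} in $\dot f_1,\dot f_2$; likewise $C_2,C_3$ and hence $b_2$ stay bounded. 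The complexity $\kappa_1$ is the combinatorial bound $4$ from Figure~\ref{complexity}, independent of $\dot f_i$ since $f_1,f_2$ (hence the geometry of the singularity lines up to bounded distortion) are fixed — this is exactly the observation recorded at the end of Section~5. The one genuinely $\dot f$-dependent quantity, $N_0$, appears only through the shift $\ell\mapsto\ell-N_0$ and through $\kappa_1^{N_0}$, which is absorbed into the telescoping-distortion argument above and does not enter $b$ or $\vartheta_4$. I would write this dependency bookkeeping out explicitly, since it is the crux of why the lemma is stated the way it is.
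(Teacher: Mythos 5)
Your proposed decomposition is genuinely different from the paper's, and the difference matters: you partition $\gamma$ according to the unstable curve $\gamma_j'$ containing $\cf_0^{N_0}\varrho$ and then try to apply Lemma~\ref{firsthittime} on each $\gamma_j'$. The paper instead decomposes according to the \emph{last long curve} the trajectory visits before time $N_0$ (with the original long curve $\gamma$ itself covering the case where no long curve is ever visited). That distinction is precisely what makes the paper's estimate go through without any extension of Lemma~\ref{firsthittime}: the last long curve visited is, by definition, long, and $\gamma$ itself is long, so Lemma~\ref{firsthittime} applies directly in both cases.

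The gap in your plan is the claimed extension of Lemma~\ref{firsthittime} to short curves. You assert that for a short $\gamma_j'$ the waiting window $n_{\gamma_j'}$ is ``bounded by a constant depending only on $\vartheta_1$, not on $|\gamma_j'|$.'' That is false: from the Third Growth Lemma, $n_{\gamma_j'}=\bigl[\log|\gamma_j'|/\log\vartheta_1\bigr]+1$, which grows without bound as $|\gamma_j'|\to 0$. Concretely, if $|\gamma_j'|=\vartheta_1^{100}$ then a curve cannot reach size $\vartheta_1/2$ in fewer than roughly $99\log(1/\vartheta_1)/\log\Lambda_{\cf}$ iterates, so $\bar N(\cdot,\gamma_j')$ is uniformly large on \emph{all} of $\gamma_j'$; for intermediate values of $\ell-N_0$ (below $n_{\gamma_j'}$ but large enough that $\vartheta_4^{\ell-N_0}$ is tiny) the bound $m_{\gamma_j'}(\bar N=\ell-N_0)\le b_2\vartheta_4^{\ell-N_0}|\gamma_j'|$ simply fails. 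To salvage your route you would have to argue that the total pull-back measure carried by such short $\gamma_j'$ is itself exponentially small in $\ell-N_0$, which amounts to re-deriving the paper's argument via the last-long-curve bookkeeping. The part of your proposal on uniformity of $b,\vartheta_4$ in $\dot f_1,\dot f_2$ (via Remarks~\ref{main-rmrk} and \ref{lbd-f}, $K\to 1$, fixed complexity $4$) is sound and matches the paper.
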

\begin{proof}
Note that the set $\{\varrho:\hat N(\varrho)=\ell\}$ consists of two types of points. First, the ones that never visit a long curve throughout their journey up to time $\ell$. The measure of this set can be estimated by the third growth Lemma \ref{3growthlemma} as $b_2 \vartheta^\ell$.
For $\varrho \in \gamma$ let $\{\gamma_\varrho\}$ by the last long curve the trajectory of $\varrho$ visits before time $N_0$. From $\gamma_\varrho$ to $\ell$ the particle will have to make at least $\ell-{N_0}$ many steps. Applying the third growth lemma to $\gamma_\varrho$ we see that the measure of the points that will arrive at a long curve after $\ell-N_0$ is at most $Kb \vartheta^{\ell-N_0}$, where $K$ is the distortion. Thus
$$
m_\gamma(\varrho: \hat N(\varrho)=\ell)\leq  b_2\vartheta_4^\ell+K b_2\vartheta_4^{\ell-N_0}\leq 3b_2\vartheta_4^{\ell-N_0}.
$$
By taking $b=3b_2$, we get the desired estimate. As it was pointed out in Remark \ref{main-rmrk}, the constants $b_2$ and $\vartheta_4$ will only decrease as we increase $\f$. Hence, we can choose $b, \vartheta_4$ to be the uniform upper bounds of all pairs $(b, \vartheta_4)$ for all  $\dot f_1,\dot f_2$, with $|\dot f_1|,|\dot f_2|\geq \f$. This will prove the last statement of the lemma.
\end{proof}

\subsection{A large deviation bound}\label{dv-est}

We now iterate the long curves obtained in the previous Lemma. Namely, define $\hat N_k$ inductively. We set $N_1=\hat N$ and let $\{\gamma_\varrho\}$ be the collection of all long curves in $P_0^{\hat N}\gamma$. Suppose $P_0^{\hat N}\varrho\in \gamma_0$, where $\gamma_0$ is a long curve. Then set $\hat N_k(\varrho)=\hat N_{k-1}(P_0^{\hat N_{k-1}}\varrho, \gamma_0)$. 

\begin{lemma}\label{dev-prop}
There exists $a=a(N_0)>0$ and $\vartheta_5<1$ such that
$$
m_\gamma(\varrho:\hat N_n(\varrho)> an)\leq \vartheta_5^n |\gamma|.
$$
\end{lemma}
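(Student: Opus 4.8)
The plan is to derive the large deviation bound in Lemma \ref{dev-prop} from the single-step bound in Lemma \ref{pr-delayed} by a standard exponential-moment (Chernoff) argument applied to the increments $\Delta_k := \hat N_k - \hat N_{k-1}$. The key point is that, conditioned on the trajectory up to the stopping time $\hat N_{k-1}$, the point $P_0^{\hat N_{k-1}}\varrho$ lies on a long curve $\gamma_0$, and Lemma \ref{pr-delayed} gives the uniform tail bound $m_{\gamma_0}(\Delta_k = \ell \mid \mathcal{F}_{k-1}) \le b\,\vartheta_4^{\ell - N_0}\,|\gamma_0|$ for all $\ell > N_0$ (and the increment is always $\ge 1$, in fact $\ge N_0$ by construction). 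Because the bound is the same for every long curve $\gamma_0$, this is a genuine conditional tail estimate that does not deteriorate with $k$; the distortion constant $K$ has already been absorbed into $b$ in the proof of Lemma \ref{pr-delayed}.

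First I would fix a parameter $s>0$ small and estimate the conditional exponential moment: for $s$ small enough that $e^s \vartheta_4 < 1$,
\[
\int_{\gamma_0} e^{s\Delta_k}\, dm_{\gamma_0} \Big/ |\gamma_0| \;\le\; \sum_{\ell \ge N_0} e^{s\ell}\, b\,\vartheta_4^{\ell - N_0} \;=\; b\,e^{sN_0}\sum_{j\ge 0}(e^s\vartheta_4)^j \;=\; \frac{b\,e^{sN_0}}{1 - e^s\vartheta_4} \;=:\; M(s) < \infty .
\]
Iterating this conditional estimate over $k = 1,\dots,n$ (conditioning successively and using that the bound is uniform over long curves) gives
\[
\int_\gamma e^{s\hat N_n}\, dm_\gamma \;\le\; M(s)^n\, |\gamma|.
\]
Then by Markov's inequality, for any threshold $a>0$,
\[
m_\gamma(\varrho : \hat N_n(\varrho) > an) \;\le\; e^{-san}\int_\gamma e^{s\hat N_n}\, dm_\gamma \;\le\; \big(e^{-sa} M(s)\big)^n |\gamma| .
\]
It remains to choose $s$ and $a$ so that $\vartheta_5 := e^{-sa}M(s) < 1$. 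Fixing any small admissible $s$, the quantity $M(s)$ is a finite constant (depending on $N_0$ through $b\,e^{sN_0}$), so taking $a = a(N_0)$ large enough that $e^{-sa}M(s) < 1$ yields the claim with that value of $\vartheta_5$.

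The main technical point to get right is the conditioning argument: one must verify that the family of long curves produced in $P_0^{\hat N_{k-1}}\gamma$ genuinely partitions (up to the negligible set of points that never hit a long curve, already controlled by Lemma \ref{3growthlemma} and folded into Lemma \ref{pr-delayed}), and that Lemma \ref{pr-delayed} applies verbatim on each piece since each is a long curve in the sense of Definition \ref{longcurve}. The uniformity of $b$ and $\vartheta_4$ over long curves — and, by the last sentence of Lemma \ref{pr-delayed}, over the range $|\dot f_1|,|\dot f_2| \ge \f$ — is exactly what makes the product bound $M(s)^n$ legitimate with a single constant. A minor subtlety is that $\Delta_k \ge N_0$ rather than $\ge 1$, which only helps (it lets one pull out the factor $e^{sN_0}$ cleanly and, if one prefers, replace $M(s)$ by a quantity that can be made close to $1$ for small $s$, though this is not needed). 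Beyond that, the argument is the routine Chernoff/large-deviation bound for a sequence of increments with uniform exponential tails, and no further hyperbolicity input is required.
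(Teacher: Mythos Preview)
Your proposal is correct and follows essentially the same route as the paper: compute a uniform exponential moment $\E_\gamma[e^{s\hat N}]=:\rho<\infty$ for small $s$ using the tail bound of Lemma~\ref{pr-delayed}, iterate via conditioning on the $\sigma$-algebra generated by the long-curve partition at step $k$ to obtain $\E_\gamma[e^{s\hat N_n}]\le\rho^n$, and then apply Markov's inequality with threshold $an$ and choose $a$ large enough that $e^{-sa}\rho<1$. Your notation and bookkeeping are slightly cleaner, but the argument is identical in substance.
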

\begin{proof}
First note that for any $c>0$ we can write
$$
\E_\gamma[e^{c \hat N}]\leq \sum_{k=N_0}^\infty 3 \vartheta_4^{k-N_0} e^{ck}< 3\vartheta_4^{-N_0} \sum_{k=N_0}^\infty (\vartheta_4 e^{c})^k.
$$
If $c$ is small enough then $\vartheta_4 e^{c}<1$. Hence
$$
\E[e^{c \hat N}]\leq Cb\vartheta_4^{-N_0} (\vartheta_4 e^{c})^{N_0}=\rho<\infty.
$$
Let $\mathcal{A}_k$ be the $\sigma$-algebra generated by the partition of the long curve at step $k$ by the intervals of constancy of $\hat N_k$. Then
$$
\E[e^{c\hat N_n}]=\E[\E[e^{c\hat N_n}|\mathcal{A}_{n-1}]]=\E[e^{c\hat N_{n-1}}\E[e^{c(\hat N_n - \hat N_{n-1})}|\mathcal{A}_{n-1}]]
$$
$$
\leq \E[e^{c\hat N_{n-1}}]\rho \leq \rho^n.
$$
Thus for each $\lambda>0$
$$
e^{\lambda} m_\gamma(\varrho: e^{c\hat N_n(\varrho)}>e^{\lambda})\leq \rho^n |\gamma|.
$$
Choose $\lambda=nq$. Then
$$
m_\gamma(\varrho: e^{c\hat N_n(\varrho)}>e^{nq})=m_\gamma(\varrho: c{\hat N_n}(\varrho)>nq)\leq \Big(\frac{\rho}{e^q}\Big)^n.
$$
Taking $q$ so large that $e^q>\rho$ and setting $\vartheta_4=\rho/e^q$ and $a=q/c$ we get that
$$
m_\gamma(\varrho: \hat N_n(\varrho)>na)\leq \vartheta_4^n |\gamma|.
$$
\end{proof}


\section{Energy Growth in Finite Time}\label{fnt-time}

In this section we show that for sufficiently large integer $N_0$ exponential energy growth can be achieved for every long curve $\gamma$ for a substantial portion of initial conditions $\varrho=( x_0,z_0) \in \gamma$ (Proposition \ref{N0}), if the parameters $\V$ and $\f$ are chosen to be sufficiently large and $f_1 \neq f_2$. 

\begin{definition}[Complete curves]\label{D1}
An unstable curve $\gamma$ will be called complete if $\gamma$ runs across the entire $\partial F_{P_0,m}$ from the top to the bottom, for some $P_0 \in \mathcal{A}$ and index $m$ (see Def. \ref{complete-crv}).
\end{definition}

\begin{figure}[!ht]
    \centering
    \begin{tikzpicture}
       \draw[->] (-0.3,0) -- (2.5,0);
       \draw[->] (0,0) -- (0,3);
       \draw (0,0) node[anchor=north]{0}
             (2,0) node[anchor=north]{2};
       \draw (2,0)--(2,2.8);
       \draw (0,0.8)--(2,1.5) (0,1.4)--(2,2.1);
       \draw[thick,red] (0.7,1.65)--(0.9,1.1);
       
       \draw[->] (3.7,0) -- (6.5,0);
       \draw[->] (4,0) -- (4,3);
       \draw (4,0) node[anchor=north]{0}
             (6,0) node[anchor=north]{2};
       \draw (6,0)--(6,2.8);
       \draw (4,0.8)--(6,1.5) (4,1.4)--(6,2.1);
       \draw[thick,blue] (4.8,1.5)--(4.9,1.2);
    \end{tikzpicture}
    \caption{Complete and incomplete curves}
    \label{fig:completecurves}
\end{figure}
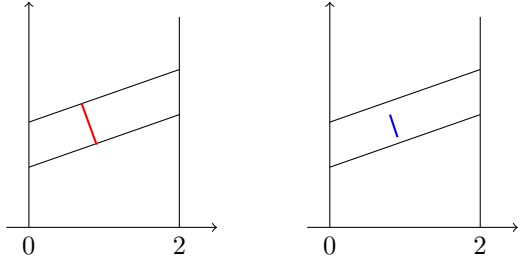

The next Lemma shows that every unstable curve can essentially be identified with its projection to the vertical axes, up to a distortion constant close to $1$.
\begin{lemma}\label{vrs}
 Let $\gamma$ be an unstable curve. There exists a constant $1<K=K(\f)$ such that if $(x_1, y_1)$ and $(x_2, y_2)$ are the endpoints of $\gamma$, then
\begin{equation}\label{proj}
1\leq \frac{|\gamma|}{|y_1-y_2|}\leq K,
\end{equation}
and $\lim_{|\f| \rightarrow \infty}K=1$. Next, if $\gamma$ is in $R_1$, with $|\gamma|\gg 2$, then
\begin{equation}\label{U11}
\frac{|\gamma \cap S_{+1}|}{|\gamma|}=f_2+o(|\gamma|)
\end{equation}
and respectively
\begin{equation}\label{L11}
\frac{|\gamma \cap S_{-1}|}{|\gamma|}=(1-f_2)+o(|\gamma|).
\end{equation}
\end{lemma}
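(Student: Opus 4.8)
Both statements rest on one fact from the Hyperbolicity section: once $\f$ is large, the common unstable cone $\mathcal{C}_u^*=\mathcal{C}_{u,\f}$ is squeezed against the vertical direction. Indeed, by Lemma \ref{hyperbolicmatrix} and Proposition \ref{gcone} the unstable eigenvectors of $DG_{12}^U,DG_{21}^U,DG_{12}^L,DG_{21}^L$ tend to $(0,1)$ while the stable eigenvectors keep a definite angle from the vertical as $\f\to\infty$, so $\mathcal{C}_u^*$ can be taken inside a cone of half--aperture $\epsilon=\epsilon(\f)$ about the vertical with $\epsilon(\f)\to 0$. The first thing to record is then that every unstable curve $\gamma$ is a graph $y\mapsto(x(y),y)$ over its energy coordinate $y$ with $|x'(y)|\le\epsilon$ on $\gamma$. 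Part \eqref{proj} is then immediate: assuming $y_1\le y_2$,
\[
|y_1-y_2|\ \le\ |\gamma|\ =\ \int_{y_1}^{y_2}\sqrt{1+x'(y)^2}\,dy\ \le\ \sqrt{1+\epsilon^2}\,|y_1-y_2| ,
\]
so \eqref{proj} holds with $K=\sqrt{1+\epsilon(\f)^2}$, and $K\to 1$ as $|\f|\to\infty$.

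For \eqref{U11}--\eqref{L11} I would work on $R_1$ in the coordinates $(\sigma,\mathcal{H})$, write $\gamma=\{(\sigma(\mathcal{H}),\mathcal{H}):\mathcal{H}\in[\mathcal{H}_1,\mathcal{H}_2]\}$ with $|\sigma'|\le\epsilon$ and set $L:=|\mathcal{H}_2-\mathcal{H}_1|$ (so $L\le|\gamma|\le\sqrt{1+\epsilon^2}\,L$ by the previous step). Put $c_0:=\mathcal{L}_*(\theta_2^*-\theta_1^*)=2\int_{t_1^*}^{t_2^*}l(s)^{-2}\,ds>0$, the fixed slope constant of the singularity strips (positive since $t_1^*<t_2^*$; it is the same $c_0$ appearing in the stable directions in Proposition \ref{gcone}), and introduce the phase $\phi(\mathcal{H}):=c_0\mathcal{H}-\sigma(\mathcal{H})$. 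By the description of the divided cylinder in Section \ref{divphase}, a point $(\sigma(\mathcal{H}),\mathcal{H})$ of $\gamma$ lies in $S_{+1}=\mathcal{L}_{en}^{s}\cup\mathcal{L}_{en}^{l}$ exactly when $\phi(\mathcal{H})\bmod 2\in A_+:=[0,f_2]\cup[2-f_2,2)$ and in $S_{-1}=\mathcal{U}_{en}$ exactly when $\phi(\mathcal{H})\bmod 2\in A_-:=[f_2,2-f_2]$, up to the $\mathcal{O}(\mathcal{H}^{-1})=\mathcal{O}(\V^{-1})$ uncertainty in the strip boundaries. Since $|\sigma'|\le\epsilon<c_0$, the map $\phi$ is a strictly increasing $C^1$ diffeomorphism of $[\mathcal{H}_1,\mathcal{H}_2]$ with $\phi'\in[c_0-\epsilon,c_0+\epsilon]$.

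The heart of the matter is then an equidistribution count. Changing variables $u=\phi(\mathcal{H})$ (Jacobian between $c_0-\epsilon$ and $c_0+\epsilon$) and counting the complete periods of length $2$ inside $[\phi(\mathcal{H}_1),\phi(\mathcal{H}_2)]$, the $\mathcal{H}$--sets $E_\pm:=\{\mathcal{H}\in[\mathcal{H}_1,\mathcal{H}_2]:\phi(\mathcal{H})\bmod 2\in A_\pm\}$ satisfy $|E_+|=f_2 L+\mathrm{err}$ and $|E_-|=(1-f_2)L+\mathrm{err}$ with $\mathrm{err}=O(\epsilon L)+O(\V^{-1}L)+O(1)$ — the $O(\epsilon L)$ from $\phi'$ not being exactly $c_0$, the $O(\V^{-1}L)$ from the boundary fuzz, and the $O(1)$ from the at most two incomplete periods at the ends. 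Now $|\gamma\cap S_{+1}|=\int_{E_+}\sqrt{1+(\sigma')^2}\,d\mathcal{H}$ and $|\gamma|=\int_{\mathcal{H}_1}^{\mathcal{H}_2}\sqrt{1+(\sigma')^2}\,d\mathcal{H}$ differ from $|E_+|$, resp.\ from $L$, only by a factor in $[1,\sqrt{1+\epsilon^2}]$, so dividing gives $|\gamma\cap S_{+1}|/|\gamma|=f_2+O(\epsilon)+O(\V^{-1})+O(1/L)$ and $|\gamma\cap S_{-1}|/|\gamma|=(1-f_2)+O(\epsilon)+O(\V^{-1})+O(1/L)$; since $L\ge|\gamma|/K\gg 1$ while $\epsilon(\f)$ and $\V^{-1}$ are as small as we wish, these are \eqref{U11} and \eqref{L11} (the second also follows from $|\gamma\cap S_{+1}|+|\gamma\cap S_{-1}|=|\gamma|$).

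I do not expect a genuine obstacle here: the substance — uniform transversality of $\mathcal{C}_u^*$ both to the vertical axis and to the singularity strips, whose directions are precisely the stable directions of $DG$ controlled in Proposition \ref{gcone} — is already available. The only point demanding care is the equidistribution step: because $\phi$ is merely \emph{nearly} affine rather than affine, one must run the monotone change of variables honestly rather than reading off proportions, and check that the cone aperture $\epsilon(\f)$, the boundary fuzz $\mathcal{O}(\V^{-1})$ and the $O(1)$ end effects are the sole sources of error and do not compound.
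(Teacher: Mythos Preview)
Your proof is correct and takes essentially the same approach as the paper: both use that the unstable cone is nearly vertical (giving \eqref{proj} with $K\to 1$) and that a long unstable curve in $R_1$ crosses many consecutive strips $\mathcal{L}_{en}^s,\mathcal{U}_{en},\mathcal{L}_{en}^l$ transversally, inheriting their width proportions $f_2:(1-f_2)$. The paper phrases the second part more tersely---comparing lengths of consecutive complete sub-curves in $S_{+1}$ versus $S_{-1}$ via the distortion constant $K$ and noting the two incomplete end pieces are $o(|\gamma|)$---whereas you spell out the equidistribution explicitly through the monotone phase $\phi(\mathcal{H})=c_0\mathcal{H}-\sigma(\mathcal{H})$; the substance is identical.
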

\begin{proof}
\eqref{proj} follows from the fact that $\gamma \in C^2$ and the cones $\mathcal{C}_{u}^*$ are almost vertical.

For \eqref{L11} and \eqref{U11} we note 
that for every complete curve $\gamma_1 \subset S_{+1}$ and $\gamma_2 \subset S_{-1}$ one has that $\frac{f_2}{1-f_2}K\leq |\gamma_1|/|\gamma_2|\leq K \frac{f_2}{1-f_2}$ and for every complete curve $\gamma_1 \subset S_1$ and $\gamma_2 \subset S_{1}$ one has that $1/K\leq |\gamma_1|/|\gamma_2|\leq K$. And since the connected components
$\{\mathcal{L}_{en,m}^l\}_{m},\{\mathcal{L}_{en,m}^s\}_{m}$ and $\{\mathcal{U}_{u,m}\}_{m}$ interchange each other (c.f. Figure \ref{fig:proportionlongcurve}), then the estimates \eqref{L11} and \eqref{U11} follow. The proportion of non-complete curves at the endpoints of $\gamma$ is of size $o(|\gamma|)$.
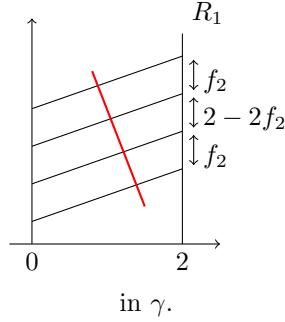
\begin{figure}[!ht]
    \centering
    \begin{tikzpicture}
       \draw[->] (-0.3,0) -- (2.5,0);
       \draw[->] (0,0) -- (0,3);
       \draw (0,0) node[anchor=north]{0}
             (2,0) node[anchor=north]{2};
       \draw (2,0) -- (2,2.8) node[anchor=south west]{$R_1$};
       \draw (0,0.3) -- (2,1)
             (0,0.8) -- (2,1.5)
             (0,1.3) -- (2,2)
             (0,1.8) -- (2,2.5);
       \draw[<->] (2.15,1.05) -- (2.15,1.45) node[anchor=north west]{$f_2$};
       \draw[<->] (2.15,1.55) -- (2.15,1.95) node[anchor=north west]{$2-2f_2$};
       \draw[<->] (2.15,2.05) -- (2.15,2.45) node[anchor=north west]{$f_2$};
       \draw[thick,red] (0.8,2.3) -- (1.5,0.5);
    \end{tikzpicture}
    \caption{Proportions of $S_{\pm1}$} in $\gamma$. 
    \label{fig:proportionlongcurve}
\end{figure}

\end{proof}

Denote $q_1=f_2$ and $q_{-1}=1-f_2$. For $t\in \{1,-1\}$ and an unstable curve $\gamma$ let
$$
A_{\gamma, t}=\{z_0\in \gamma: z_1\in S_{t}\},
$$
and denote by $\Gamma_{\gamma, t}$ the collection of all complete curves in $\cf_0\gamma \cap S_t$. 

We have the following Lemma.

\begin{lemma}\label{L1}
For every $\varepsilon>0$, $\f$ can be taken so large that if $|\dot{f}_1|,|\dot{f}_2|>\dot{f}_*$, then for every $t \in \{-1,1\}$ and any complete curve $\gamma$ in the $R_1$ cylinder we have that
\begin{equation}\label{Q1}
m_\gamma\Big(A_{\gamma, t}\Big)\leq (q_{t}+\varepsilon)|\gamma|,
\end{equation}
and for the collection $\Gamma_{\gamma,t}$ of all complete curves in $\cf_0\gamma \cap S_{t}$ we have that 
\begin{equation}\label{Q2}
m_\gamma\Big(A_{\gamma, t}\setminus \Big(\bigcup_{\ell \in \Gamma_{\gamma,t}}P^{-1}\ell\Big)\Big)\leq \varepsilon|\gamma|.
\end{equation}
\end{lemma}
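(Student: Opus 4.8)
The plan is to exploit two facts established earlier: the strong uniform hyperbolicity (Proposition \ref{pcone}, with expansion rates comparable to $|\dot f_i|$ by Remark \ref{lbd-f}), and the fact that a long complete curve $\gamma$ spends the ``correct'' proportion of its length over each singular strip (Lemma \ref{vrs}). First I would pull the complete curve $\gamma$ forward under one half-revolution $P_0$; since $\gamma$ runs across an entire $\partial F_{P_0,m}$ from top to bottom and the maps $P_0$ take ascending strips to narrow long descending strips (as described in Section \ref{divphase}), the image $P_0\gamma$ is itself (essentially) a union of long unstable curves crossing the $R_2$-cylinder. The key quantitative input is that the expansion along $\mathcal{C}_u^*$ is enormous when $\f$ is large, so the image $\cf_0\gamma = P_0^2\gamma$ has length of order $\lambda_{\cf}|\gamma| \gg 2$, and by the distortion control of Proposition \ref{dstr-bd} (with $K\to 1$ as $V_*\to\infty$) the preimages $P^{-1}\ell$ of its complete sub-pieces partition almost all of $\gamma$ up to pieces controlled by $o(|\gamma|)$-type corrections coming from the finitely many singularity curves.

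For the first estimate \eqref{Q1}, the set $A_{\gamma,t}$ is exactly the portion of $\gamma$ that lands in $S_t$ after one complete revolution; equivalently it is $\cf_0^{-1}(\cf_0\gamma \cap S_t)$. I would argue that $\cf_0\gamma$, being a long curve crossing $R_1$, satisfies the proportion estimate \eqref{U11}--\eqref{L11} of Lemma \ref{vrs}, namely $|\cf_0\gamma \cap S_t| = (q_t + o(1))|\cf_0\gamma|$; then pulling this back via the distortion bound $K$ gives $m_\gamma(A_{\gamma,t}) \le K(q_t + o(1))|\gamma|$, and choosing $\f$ large forces $K$ close to $1$ and the $o(1)$ term small, which yields $(q_t+\varepsilon)|\gamma|$. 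One has to be slightly careful that Lemma \ref{vrs} requires $|\cf_0\gamma|\gg 2$, which is guaranteed because $\gamma$ is complete (hence of definite size, at least comparable to the strip width) and $\lambda_{\cf}$ is huge; alternatively, if $\cf_0\gamma$ is merely long rather than very long, I would instead cover $S_t$ by the complete components it meets and count them directly using that the strips $\mathcal{L}^l_{en}, \mathcal{L}^s_{en}, \mathcal{U}_{en}$ alternate with the stated widths $f_2, 2-2f_2, f_2$.

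For the second estimate \eqref{Q2}, the point is that $A_{\gamma,t}$ decomposes into the preimages of the complete curves in $\Gamma_{\gamma,t}$ together with the preimages of the incomplete pieces of $\cf_0\gamma \cap S_t$ lying near the two endpoints of $\cf_0\gamma$ or adjacent to singularity lines. There are only boundedly many such incomplete pieces (their number is controlled by the complexity $\kappa_1$), and each has length at most the strip width in the $R_1$-direction, which relative to the huge total length $|\cf_0\gamma|\ge \lambda_{\cf}|\gamma|/K$ is of relative size $O(1/\lambda_{\cf})$; pulling back through the distortion constant $K$, their total $\gamma$-measure is at most $(C K/\lambda_{\cf})|\gamma|$, which is $\le \varepsilon|\gamma|$ once $\f$ (hence $\lambda_{\cf}$) is large enough. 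I expect the main obstacle to be the bookkeeping at the singularities: one must verify that cutting $\cf_0\gamma$ at the singularity curves of $P_0$ (not just at the two geometric endpoints) produces only $O(1)$ extra incomplete fragments, which relies on the complexity bound $\kappa_1(\delta_0)$ from Section \ref{grwth} applied at the relevant scale, together with the observation that an unstable curve of size $\gg 1$ meets each family of parallel singularity lines in a controlled number of points. Once this combinatorial control is in place, both \eqref{Q1} and \eqref{Q2} follow by combining the proportion estimate of Lemma \ref{vrs} with the distortion estimate of Proposition \ref{dstr-bd} and letting $\f\to\infty$.
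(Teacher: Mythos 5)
Your plan follows the same route as the paper: push the complete curve forward, apply the proportion estimate of Lemma \ref{vrs} to the long image, and use the distortion control of Proposition \ref{dstr-bd} to pull the $q_t$-proportions back to $\gamma$, treating the incomplete fragments as a vanishing proportion as $\f\to\infty$.

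Two points are worth tightening. First, $\cf_0\gamma$ is not a single unstable curve: $P_0\gamma$ is cut at the band boundaries of $R_2$ into of order $\Lambda_{12}$ connected pieces, so Lemma \ref{vrs} (stated for one connected curve with two endpoints) cannot literally be applied to $\cf_0\gamma$. The argument you describe as an ``alternative'' --- isolate the complete pieces $\gamma'$ of $P_0\gamma$ in $R_2$, apply Lemma \ref{vrs} to each single long curve $P_0\gamma'$ in $R_1$, and pull back through two rounds of distortion --- is in fact the needed formulation, and is exactly what the paper does. Second, the number of incomplete fragments in $\cf_0\gamma$ is not $O(1)$: there is one pair for each of the $\approx\Lambda_{12}$ connected pieces, and the complexity $\kappa_1(\delta_0)$, which controls curves of size at most $\delta_0 \ll 1/\Lambda$, does not bound the cuts of the long image. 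Since each fragment has relative length $O(1/\lambda_{\cf})$ in $\cf_0\gamma$, the total $\gamma$-measure is still $O(|\gamma|/\Lambda_{12})=o(|\gamma|)$, so your conclusion survives the corrected count; but the intermediate claim should be restated.
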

\begin{proof}

We consider the case $\gamma \subset S_{+1}$, i.e. the energy travels through the upper route. The case $\gamma \subset S_{-1}$ is analogous.
Note that the curve $P_0\gamma$ will first be mapped into $R_2$ (see Figure \ref{fig:dividedphase}). It will be of size close to $|\gamma|\f$. In $R_2$ it will be cut into multiple components by singularity lines. Note that there will only be two non-complete curves at the endpoints of $P_0\gamma$.
Clearly, the proportion of the two non-complete curves compared with the complete ones in between will be negligible for large $\f$. We next consider the complete curves $\{\gamma'\}$ on $P_0 \gamma$ in $R_2$ and iterate them further into the $R_1$ cylinder. Note that $P_0\gamma'$ will be an unstable curve of size $\f|\gamma'|$, where $|\gamma'| \approx 1/2$ and its endpoints will be at the vertical lines $\{\sigma=f_1\}$ and $\{\sigma=2-f_1\}$.
Obviously, by taking $\f$ large the proportion of non-complete curves in $P_0 \gamma'$ can be made arbitrarily small. This proves \eqref{Q2} since whatever is left out of the complete curves can be made to have measure less than $\varepsilon |\gamma|$. 
To see \eqref{Q1} we note that by Proposition \ref{vrs} the proportion of the sets $P_0 \gamma' \cap S_1$ and $P_0 \gamma' \cap S_{-1}$ in the $R_1$ cylinder are respectively of size $q_1$ and $q_{-1}$. Then, the distortion Lemma \ref{dstr-bd} lets us to conclude that the pre-images of these sets in $\gamma'$ and then in $\gamma$ is again going to be of size $q_1$ and $q_{-1}$. This completes the proof.

\end{proof}

We now extend the definition of $A_{\gamma,t}$ given above. For an unstable curve $\gamma$ and an itinerary $t=(t_1, \dots, t_{n})$, with $t_k = \pm 1$, $k \leq n$, and $1 \leq m \leq n$ denote
\begin{equation}
A_{\gamma, t,m}=\{
z_0 \in \gamma: z_1 \in S_{t_1}, z_2 \in S_{t_2} \\
\dots z_{m} \in S_{t_m}
\}.
\end{equation}

We have the following extension of the previous Lemma.
\begin{lemma}\label{ind-trj}
For every $n\geq 1$, $\varepsilon>0$, $\f$ can be taken so large that, if $|\dot{f}_1|,|\dot{f}_2|>\dot{f}_*$,
then for every complete curve $\gamma$ in the $R_1$ cylinder and any itinerary $t=(t_1, \dots, t_n)$ we have that
$$
m_\gamma(A_{\gamma,t,n})\leq \Big(q_{t_1}q_{t_2}\dots q_{t_n} + \varepsilon\Big)|\gamma|.
$$
\end{lemma}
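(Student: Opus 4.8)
The plan is to prove Lemma \ref{ind-trj} by induction on $n$, with Lemma \ref{L1} furnishing both the base case $n=1$ and the inductive engine. The key conceptual point is that the distortion control (Proposition \ref{dstr-bd}) lets us transport proportions back and forth along the dynamics with a multiplicative error that tends to $1$ as $\V \to \infty$, while Lemma \ref{L1} tells us that one step of $\cf_0$ applied to a complete curve splits it (up to a negligible piece) into complete curves in $S_{+1}$ and $S_{-1}$ with relative proportions $q_{+1}=f_2$ and $q_{-1}=1-f_2$. The subtlety is that the pieces produced after one step are complete curves of the same type, so the hypothesis of Lemma \ref{L1} is reproduced and the induction can proceed; this is exactly why \eqref{Q2} was included as part of Lemma \ref{L1}.

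More precisely, first I would fix $n$ and $\varepsilon>0$ and, working backwards, choose a sequence of auxiliary small parameters $\varepsilon_1, \dots, \varepsilon_n$ (and correspondingly large thresholds for $\f$) so that the cumulative error after $n$ steps is at most $\varepsilon$; since $n$ is fixed this is a finite bookkeeping exercise. For the base case $n=1$, \eqref{Q1} of Lemma \ref{L1} gives $m_\gamma(A_{\gamma,t_1,1}) \le (q_{t_1}+\varepsilon_1)|\gamma|$ directly. For the inductive step, suppose the bound holds for itineraries of length $n-1$ and all complete curves. Given an itinerary $t = (t_1, \dots, t_n)$ and a complete curve $\gamma$, decompose $A_{\gamma, t, n}$ according to which complete curve $\ell \in \Gamma_{\gamma, t_1}$ the image $\cf_0 z_0$ lands in: by \eqref{Q2} the portion of $A_{\gamma, t_1, 1}$ not covered by $\bigcup_{\ell} P^{-1}\ell$ (here I would note $\cf_0 = P_0^2$ but the same completeness statement applies at the level of full revolutions) has measure $\le \varepsilon_1 |\gamma|$, and on each $\ell$ the set of points whose subsequent itinerary is $(t_2, \dots, t_n)$ has $m_\ell$-measure at most $(q_{t_2}\cdots q_{t_n} + \varepsilon_2)|\ell|$ by the inductive hypothesis applied to the complete curve $\ell$.

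Then I would pull these estimates back to $\gamma$: by Proposition \ref{dstr-bd} the preimage $\cf_0^{-1}\ell \subset \gamma$ satisfies $m_\gamma(\cf_0^{-1}\ell)/|\gamma| \le K \cdot |\ell| / |\cf_0 \gamma|$ and, more importantly, the proportion of $A_{\gamma,t,n} \cap \cf_0^{-1}\ell$ inside $\cf_0^{-1}\ell$ differs from the proportion of its image inside $\ell$ by at most a factor $K$ with $K \to 1$. Summing over $\ell \in \Gamma_{\gamma, t_1}$ and using $\sum_\ell |\ell| \le |\cf_0\gamma|$ together with $m_\gamma(A_{\gamma,t_1,1}) \le (q_{t_1}+\varepsilon_1)|\gamma|$ from \eqref{Q1}, one obtains
\[
m_\gamma(A_{\gamma,t,n}) \le \Big( K (q_{t_2}\cdots q_{t_n} + \varepsilon_2)(q_{t_1}+\varepsilon_1) + \varepsilon_1 \Big) |\gamma|,
\]
and choosing $\f$ large enough that $K$ is close to $1$ and $\varepsilon_1, \varepsilon_2$ small enough, the right side is $\le (q_{t_1}q_{t_2}\cdots q_{t_n} + \varepsilon)|\gamma|$, completing the induction.

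The main obstacle I anticipate is making the error propagation genuinely uniform: at each step the distortion constant $K$ and the leftover-mass constant multiply, and one must be careful that the thresholds on $\f$ (equivalently on $\V$) needed at the various levels of the induction can be chosen consistently. Since $n$ is fixed in advance and all the relevant constants ($K$, the non-complete-curve proportions) tend to their ideal values as $\f \to \infty$ at rates independent of the level, this is a matter of picking $\f$ after $n$ and $\varepsilon$ are given, so it is bookkeeping rather than a genuine difficulty — but it is the step where care is needed, and it is why the lemma is stated with "$\f$ can be taken so large" rather than with a fixed $\f$.
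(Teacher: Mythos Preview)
Your proposal is correct and follows essentially the same approach as the paper: both argue by induction, use \eqref{Q1} and \eqref{Q2} of Lemma~\ref{L1} to peel off one step and land on complete curves, and invoke the distortion bound to pull proportions back to $\gamma$. The only organizational difference is that the paper runs a forward step-by-step induction on $m=1,\dots,n$ (simultaneously maintaining the measure bound and the complete-curve collection $\Gamma_m$ at each stage), whereas you induct on the itinerary length and absorb the later steps into the inductive hypothesis; these amount to the same computation and the same final estimate.
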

\begin{proof}
The idea is to iterated the complete curves obtained in the previous Lemma. We wish to show that for every $\varepsilon>0$ and every $m$, with $1\leq m \leq n$, one can take $\f$ so large that
\begin{equation}\label{cmpl1}
m_\gamma(A_{\gamma, t,m})/|\gamma|\leq \Big(\prod_{k \leq m} q_{t_k}\Big) + \varepsilon,
\end{equation}
and there exist a collection of complete curves $\Gamma_m$ in $\cf^{m}\gamma$ so that
\begin{equation}\label{cmpl}
m_\gamma(A_{\gamma, t,m}\setminus \bigcup_{\ell \in \Gamma_{k}}\cf^{-m}\ell)|\leq \varepsilon |\gamma|.
\end{equation}
Note that the first step $m=1$ follows from Lemma  \ref{L1}. Assume we have already proven the statement for $m < k$. For $m=k$ consider the collection $\Gamma_{k-1}$ constructed at step $k-1$ and for each $\ell \in \Gamma_{k-1}$ consider the set $A_{\ell, t_{k}}$ and the associated collections $\Gamma_{\ell,t_k}$. Define $\Gamma_k=\cup_{\ell \in \Gamma_{k-1}}\Gamma_{\ell, t_k}$. By Lemma \ref{L1} the proportion of the set $A_{\ell, t_{k}}$ inside $\ell$ will be not greater than $q_{t_m}+\varepsilon$. Hence
$$
m_\gamma(A_{\gamma,t,k})\leq m_\gamma(A_{\gamma, t,k-1}\setminus \bigcup_{\ell \in \Gamma_{k-1}} \cf^{k-1}\ell)+
 \Big(\prod_{s=1}^{k-1} q_s + \varepsilon\Big)(q_k+ \varepsilon) K|\gamma|,
$$
where the factor $K$ is to account for the distortion. The first term is small due to \eqref{cmpl} while the second term can be made arbitrarily close to $\Big(\prod_{s=1}^{k} q_s \Big)|\gamma|$. As for the collection $\Gamma_k$, note that
$$
m_\gamma\Big(A_{t,k}\setminus\bigcup_{\ell \in \Gamma_k}\cf^{-k}\ell\Big)\leq \varepsilon |\gamma|+\Big(\prod_{s=1}^{k-1} q_k + \varepsilon\Big)K\varepsilon |\gamma|,
$$
where $\varepsilon$ is the contribution of points outside of the set $A_{t,k-1}\setminus \bigcup_{\ell \in \Gamma_{k-1}}\ell$, while the second term is the contribution of the set outside of $A_{\ell, t_k}\setminus\Gamma_{\ell, t_k}$. It is now clear that if we had started the procedure with $\varepsilon$ small enough then the estimate for $m=k$ for $\varepsilon$ would follow. 
Repeating the procedure until $m=n$ and taking $\varepsilon$ small enough, we will arrive at the result.

\end{proof}

In the above Lemma we were able to estimate the probability of the energy of the ball of following a given pattern $t$. Note that we also have a lower bound on the energy change of the particle in Proposition \ref{M0} (3). We now combine these two information in order to estimate the probability of the energy of the ball of beeing too low after $N_0$ many step. The idea is to approximate the dynamics by a markov chain.

\begin{lemma}\label{main-subsect}
Assume $f_1>f_2$ and let
$$
\mathcal E=(1-f_2)\log \frac{1-f_2}{1-f_1}+f_2 \log \frac{f_2}{f_1}.
$$
Let $a>0$ and $N_0$ be a sufficiently large integer. Then $\f$ and $\V$ can be taken so large that if $|\dot f_1|,|\dot f_2|>\f$, then for the modified dynamical system $P_0$ and every unstable curve $\gamma$, with $|\gamma|>a$ we will have that
$$
\mathbf{E}_\gamma\Big[\frac{\ln z_{N_0} - \ln z_0}{N_0}\Big]\geq \frac{\mathcal E}{2}.
$$
\end{lemma}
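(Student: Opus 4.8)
The plan is to bound the true dynamics from below by a Bernoulli product system in which the accelerating (lower) branch $S_{+1}$ is visited with probability $q_{+1}=f_2$ and the decelerating (upper) branch $S_{-1}$ with probability $q_{-1}=1-f_2$, and to notice that the drift of that product system is exactly $\mathcal E$. The key structural fact is that
$$
\mathcal E=(1-f_2)\log\tfrac{1-f_2}{1-f_1}+f_2\log\tfrac{f_2}{f_1}
$$
is the Kullback--Leibler divergence of $(f_2,1-f_2)$ from $(f_1,1-f_1)$, so by the Gibbs inequality $\mathcal E>0$ strictly, precisely because $f_1\neq f_2$; this positivity is the only place the hypothesis $f_1\neq f_2$ enters.

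First I would record the symbolic lower bound on the energy. For $\varrho=(x_0,z_0)\in\gamma$ let $t(\varrho)=(t_1,\dots,t_{N_0})$ be its itinerary, $t_k=+1$ if $\cf_0^{\,k-1}\varrho\in S_{+1}$ and $t_k=-1$ otherwise, write $r_{+1}=f_2/f_1$, $r_{-1}=(1-f_2)/(1-f_1)$, and $S_+(\varrho)=\#\{k\le N_0:t_k=+1\}$. Iterating the one--revolution lower bounds \eqref{R10}--\eqref{R20} of Proposition \ref{M0}(3) along the itinerary gives $z_{N_0}\ge\ell^{N_0}\big(\prod_k r_{t_k}\big)z_0$, hence, using $S_-=N_0-S_+$,
$$
\ln z_{N_0}-\ln z_0\ \ge\ N_0\big(\ln\ell+\ln r_{-1}\big)+S_+(\varrho)\big(\ln r_{+1}-\ln r_{-1}\big),
$$
where $\ln r_{+1}-\ln r_{-1}=\log\tfrac{f_2(1-f_1)}{f_1(1-f_2)}>0$ because $f_2>f_1$. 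So it suffices to prove $\mathbf E_\gamma[S_+]\ge N_0(f_2-\varepsilon')$ with $\varepsilon'$ as small as we like, once $\dot f_*$ and $V_*$ are large.

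To bound $\mathbf E_\gamma[S_+]$ I would first run the dynamics once to grow $\gamma$: by Remark \ref{lbd-f} a complete revolution expands lengths by at least $\lambda_{\cf}\gtrsim(\dot f_*)^2$, so for $\dot f_*$ large (depending on $a$) the curve $\cf_0\gamma$ has length $\gg 2$ and is, apart from end pieces of total relative length $O(\dot f_*^{-1})$, a disjoint union of complete curves $\{\ell_j\}\subset R_1$ to which Lemma \ref{ind-trj} applies with the shifted itinerary $(t_2,\dots,t_{N_0})$. Summing that estimate over all length-$(N_0-1)$ itineraries whose $k$-th symbol is $-1$, and using $\sum_{s:\,s_k=-1}\prod_j q_{s_j}=q_{-1}=1-f_2$ together with the crude count of $2^{N_0-1}$ such itineraries, yields $m_{\ell_j}(t_k=-1)\le(1-f_2+2^{N_0-1}\varepsilon)|\ell_j|$ for $2\le k\le N_0$; pulling this back to $\gamma$ with the distortion constant $K$ of Proposition \ref{dstr-bd} (which tends to $1$ as $V_*\to\infty$), adding the $O(\dot f_*^{-1})$ mass of the end pieces and summing over $j$, I obtain $m_\gamma(t_k=-1)\le(1-f_2+\varepsilon')|\gamma|$, hence $m_\gamma(t_k=+1)\ge(f_2-\varepsilon')|\gamma|$, for $2\le k\le N_0$. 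Summing over $k$ gives $\mathbf E_\gamma[S_+]\ge(N_0-1)(f_2-\varepsilon')$, and for the single warm-up revolution I only use the uniform bound $|\ln z_1-\ln z_0|\le c$ of Proposition \ref{M0}(4).

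Putting the pieces together,
$$
\mathbf E_\gamma\Big[\tfrac{\ln z_{N_0}-\ln z_0}{N_0}\Big]\ \ge\ -\tfrac{c}{N_0}+\tfrac{N_0-1}{N_0}\Big(\ln\ell+\mathcal E-\varepsilon'(\ln r_{+1}-\ln r_{-1})\Big),
$$
using $(1-f_2)\ln r_{-1}+f_2\ln r_{+1}=\mathcal E$; the right-hand side converges to $\mathcal E$ as $\dot f_*,V_*\to\infty$ (so that $\ell\to1$, $\varepsilon'\to0$) and $N_0\to\infty$. Since $\mathcal E>0$, choosing $N_0$ large and then $\dot f_*,V_*$ large makes it exceed $\mathcal E/2$, as required. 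The step I expect to be the real obstacle is the reduction in the third paragraph: turning a generic, possibly short, unstable curve into a controlled family of complete curves and keeping the accumulated end pieces and distortion under control across all $N_0$ steps, together with absorbing the finitely many warm-up revolutions by taking $N_0$ large. The probabilistic heart of the argument, by contrast, is an essentially immediate consequence of Lemma \ref{ind-trj} and the Gibbs inequality once the complete-curve reduction is in place.
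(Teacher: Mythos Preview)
Your argument is correct and takes a genuinely different route from the paper's. Both proofs share the opening moves: recognising $\mathcal E$ as a Kullback--Leibler divergence (hence $\mathcal E>0$ when $f_1\neq f_2$), and running one warm-up revolution so that $\cf_0\gamma$ is long enough to be covered, up to negligible end pieces, by complete curves to which Lemma~\ref{ind-trj} applies. From that point the two arguments diverge. The paper compares the orbit to an i.i.d.\ product walk and invokes Hoeffding's inequality to obtain the tail bound $m_\gamma(z_{N_0}<e^{(2\mathcal E/3)N_0}z_0)\le Ce^{-N_0 r^2}|\gamma|$, and only then deduces the first-moment bound by splitting $\gamma$ into the good set and the exponentially small bad set. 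You instead exploit that the symbolic lower bound on $\ln z_{N_0}-\ln z_0$ is \emph{affine} in $S_+$, so linearity of expectation reduces everything to bounding the marginals $m_\gamma(t_k=-1)$; these you read off by summing the itinerary estimate of Lemma~\ref{ind-trj} over all words with the $k$-th symbol fixed. This is strictly more elementary---no concentration inequality is needed---while the paper's route yields, as a by-product, a pointwise large-deviation estimate that is stronger than what the lemma asks for. A minor bookkeeping remark: applying Lemma~\ref{ind-trj} on $\ell_j\subset\cf_0\gamma$ controls $z_1(\varrho'),\dots,z_{N_0-1}(\varrho')$, which in your indexing is $t_3,\dots$ rather than $t_2,\dots$; the missing symbol $t_2$ is handled by Lemma~\ref{vrs} applied to the long curve $\cf_0\gamma$, or simply absorbed into the $O(1/N_0)$ slack exactly as you already do for $t_1$.
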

\begin{proof}
Let $X, Y$ be discrete measures defined on the set $\{0,1\}$ so that $X(0)=1-f_2$ and $X(1)=f_2$ and $Y(0)=1-f_1$ and $Y(1)=f_1$. Note that
\[
   D(X \parallel Y) = (1-f_2)\log\frac{1-f_2}{1-f_1} + f_2 \log\frac{f_2}{f_1},
\]
where $D(\cdot \parallel \cdot)$ is the Kullback-Leibler divergence. It is well known that
$D(X \parallel Y)\geq 0$ and $D(X \parallel Y)=0$ if and only if $X=Y$ or equivalently $f_1=f_2$. Hence, if $f_1 \neq f_2$, then $\mathcal{E}>0$.

Note that since $|\gamma|>a$, then $\cf_0 \gamma$ will be of size at least $|\f|^2a$. Hence, if $\f$ is sufficiently large then the proportion of non-complete curves in $\cf_0 \gamma$ can be made arbitrarily small as discussed in the proof of Lemma \ref{L1}. So it is sufficient to prove the Lemma for the complete curves in $\cf_0\gamma$. Or, we can assume that $\gamma$ is a complete curve in $R_1$.

By Proposition \ref{M0} we have for $\varrho = (x_0, z_0) \in S_{+1}$ that
$$
\ell\left(\frac{f_2}{f_1}\right) z_0 \leq z_1,
$$
and respectively for the upper route $ \varrho = (x_0, z_0)\in S_{-1}$
$$
\ell\left(\frac{1-f_2}{1-f_1}\right) z_0\leq z_1.
$$
For $\lambda_0>0$, let
$$
\lambda_{n+1} = \lambda_{n} d_n,
$$
where $d_n$ is an i.i.d. sequence of discrete random variables so that  $\mathbb{P}(d_n=\ell\frac{f_2}{f_1})=1-f_2$, $\mathbb{P}(d_n=\ell(1-f_2)/(1-f_1))=1-f_2$. By assumption $\mathcal E>0$. Hence, if $\ell$ is sufficiently close to $1$ then
$$
E[\ln d_n]=(1-f_2)\log \ell\left(\frac{1-f_2}{1-f_1}\right)+f_2 \log \ell\left(\frac{f_2}{f_1}\right)=\mathcal E + \log \ell>0.
$$
Let $h=\mathcal E + \log \ell>0$. Then by the Hoeffding's inequality \cite{WH1963}, for every $0\leq r <h$ we have that
$$
\Pr\left(\sum_{k=0}^{n-1} \ln d_k < (h-r)n\right)\leq e^{-2n r^2}.
$$
Equivalently
\begin{equation}\label{estimate}
\Pr\left(\lambda_n < e^{(h-r)n}\lambda_0\right)\leq e^{-2n r^2}.
\end{equation}
Let $t=(t_1, \dots, t_{N_0})$, $t_k \in \{0,1\}$ and $t'=(t_1', t_2', \dots, t_{N_0}')$, with $t_k'=2(t_k-1/2)$, for all $k\leq N_0$. Define also $t''=(t_2, \dots, t_{N_0})$.
Let $\gamma \subset S_{t_1}$. Note that for $z_0 \in A_{\gamma, t'',N_0-1}$ we obtain by iterating the relations above (observe that in the definition of $A_{\gamma, t'',N_0-1}$ we skip the first step and because of that we needed to separate $t_1$)
$$
 \prod_{k=1}^{N_0}\left( \ell\left(\frac{f_2}{f_1}\right)\right)^{t_k}\left(\ell\left(\frac{1-f_2}{1-f_1}\right)\right)^{1-t_k}z_0 \leq z_{N_0}.
$$
We also have a bound on the measure of the set $A_{\gamma, t',N_0}$ by Lemma \ref{ind-trj}
$$
m_\gamma(A_{\gamma, t'',N_0-1})\leq \Big(\prod_{k=2}^{N_0}q_{t_k'} + \varepsilon\Big)|\gamma|,
$$
where $\varepsilon>0$ is arbitrary.
If we denote by $\lambda_n(t')$ the probability that the random walk $\lambda_n$ follows the pattern $t$, then clearly
$$
\lambda_{N_0}(t)=\prod_{k=1}^{N_0}q_{t_k'}
$$
and
$$
\lambda_{N_0}|_t = \prod_{k=1}^{N_0}\left( \ell\left(\frac{f_2}{f_1}\right)\right)^{t_k}\left(\ell\left(\frac{1-f_2}{1-f_1}\right)\right)^{1-t_k}\lambda_0.
$$
Set $\lambda_0= \min\{z_0:(x_0,z_0) \in \gamma\}\}$. Since $\lambda_0\leq z_0$, for every $\varrho \in \gamma$, then $\lambda_{N_0}|_t \leq z_{N_0}|_t$. Now let $\omega_{N_0}$ be the collection of all itineraries $t''$ of length $N_0-1$ for which there exists $\varrho \in A_{\gamma,t'', N_0-1}$ so that $z_{N_0}<e^{N_0(h-r)} z_0$. Then 
$$
m_\gamma(\varrho\in \gamma: z_{N_0}< e^{(h-r)N_0}z_0)/|\gamma|\leq \sum_{t'' \in \omega_{N_0}}|A_{\gamma, t'',N_0-1}|\leq
$$
$$
\leq \sum_{t'' \in \omega_{N_0}} \Big(\prod_{k=2}^{N_0}q_{t_k'}+\varepsilon \Big)\leq \frac{1}{q_{t_1'}}\left(1+\frac{\varepsilon}{q^{N_0-1}}\right) \sum_{t''\in \omega_{N_0}}\prod_{k=1}^n q_{t_k'}
$$
$$
\leq \frac{1}{q_{t_1'}}\left(1+\frac{\varepsilon}{q^{N_0-1}}\right) \Pr(\lambda_{{N_0}}< e^{(h-r)N_0}z_0),
$$
where $q=\min\{q_1, q_{-1}\}$. The estimate in the last line was due to the fact that $\lambda_{N_0}|_t \leq z_{N_0}|_t$ for any itinerary $t$.

Since $\gamma$ is a complete curve of size less than $1$, then for every $\varrho=(x_0, z_0) \in \gamma$ we will have $z_0\leq 2\lambda_0$, . Take $r'=r/\sqrt{2}$. Then for $N_0$ large
$$
\mathbb{P}(\lambda_{N_0}<e^{(h-r)N_0}z_0)\leq \mathbb{P}(\lambda_{N_0}<e^{(h-r)N_0}2\lambda_0)
$$
$$
\leq  \mathbb{P}(z_{N_0}<e^{(h-r')N_0}z_0)\leq e^{-2N_0r'^2}=e^{-N_0r^2}.
$$
Since $\varepsilon$ was arbitrary, then we can take it so small (at the expense of taking $\f$ and $\V$ large) that
$$
m_\gamma(\varrho\in \gamma: z_{N_0}< e^{(h-r)N_0}z_0)\leq Ce^{-N_0 r^2}|\gamma|.
$$
Since $h=\mathcal{E}+\log \ell$, then for $r$ small enough and $\ell$ close to one, we will have for some constant $C>0$
$$
m_\gamma(\varrho\in \gamma: z_{N_0}< e^{(2\mathcal{E}/3)N_0}z_0)\leq Ce^{-N_0 r^2}|\gamma|.
$$
Denote
$$
A_{N_0}= \left\{\varrho\in \gamma:\frac{\ln z_{N_0}-\ln z_{0}}{N_0}<\frac{2\mathcal{ E}}{3}\right\}
$$
Then $m_\gamma(A_{N_0})<Ce^{-N_0 r^2}|\gamma|$. Hence
$$
\mathbf{E}_\gamma\Big[\frac{\ln z_{N_0} - \ln z_0}{N_0}\Big]\geq \Big(1-|A_{N_0}|\Big)\frac{2\mathcal E}{3}-|A_{N_0}|\frac{2\mathcal E}{3}\geq \frac{\mathcal E}{2},
$$
for sufficiently large $N_0$.

\end{proof}

We now show that above lemma is correct if $\gamma$ is a long curve:

\begin{prop}\label{N0}
Assume that $f_2> f_1$. Then for all sufficiently large $N_0$ one can choose $\f$ and $\V$ so large that for the modified dynamical system $P_0$ defined in Proposition  \ref{M0}, we have for every long curve $\gamma$ on the $R_1$ cylinder that
$$
\mathbf{E}_\gamma\Big[\frac{\ln z_{N_0}(\varrho) - \ln z_0}{N_0}\Big]\geq \frac{\mathcal{E}}{3},
$$
where
$$
\mathcal{E}=(1-f_2)\log\frac{1-f_2}{1-f_1} + f_2 \log\frac{f_2}{f_1}.
$$
\end{prop}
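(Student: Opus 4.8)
The plan is to bootstrap Lemma \ref{main-subsect}, which controls unstable curves of a \emph{fixed} minimal size $a$, up to long curves, whose size lies in $[\vartheta_1/2,\vartheta_1]$ and hence drops below $a$ once $\f$ is large. The mechanism is expansion. One complete revolution stretches a long curve $\gamma$ by at least $\lambda_{\cf}$, and by Remark \ref{lbd-f} we have $\lambda_{\cf}\ge c_1^2|\dot f_1||\dot f_2|$, while $\vartheta_1=\kappa_1K^2/\lambda_{\min}$ with $\lambda_{\min}\le c_2\max(|\dot f_1|,|\dot f_2|)$; since $\min(|\dot f_1|,|\dot f_2|)\ge\f$, this gives $|\cf_0\gamma|\ge\lambda_{\cf}|\gamma|\ge\tfrac{c_1^2\kappa_1K^2}{2c_2}\,\f=:C\f\to\infty$, even though $|\gamma|\to 0$. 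As $\cf_0\gamma$ lies in the $R_1$ cylinder and its slopes lie in the (nearly vertical) cone $\mathcal{C}_u^*$, it is cut by the singularity lines into finitely many pieces, all but at most two bounded-length end pieces being \emph{complete curves} (Def.\ \ref{D1}); and each complete curve of $R_1$ has length at least a constant $a_0=a_0(f_1,f_2)>0$ independent of $\f$, because the components $F_{P_0,m}$ of $R_1$ are strips bounded by lines of $\f$-independent slope and spaced $\f$-independently, so crossing one from top to bottom costs length $\asymp a_0$. Fixing $a\in(0,a_0/2)$, Lemma \ref{main-subsect} applies to every such complete curve.

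Concretely, fix $N_0$ large and a long curve $\gamma$ in $R_1$, and write $\gamma=E_1\sqcup\bigsqcup_\ell \cf_0^{-1}\ell$, where $\{\ell\}$ are the complete pieces of $\cf_0\gamma$ and $E_1$ is the preimage of the bounded-length end portion. Since $|\cf_0\gamma|\ge C\f$ while the end portion has $\f$-independent total length, Proposition \ref{dstr-bd} gives $m_\gamma(E_1)\le\varepsilon|\gamma|$ for $\f$ large. On each $\ell$ the exceptional-set estimate produced in the proof of Lemma \ref{main-subsect}, run with $N_0-1$ in place of $N_0$, bounds $B_\ell=\{\varrho'\in\ell:\ (\ln z_{N_0-1}-\ln z_0)/(N_0-1)<2\mathcal E/3\}$ by $m_\ell(B_\ell)\le Ce^{-(N_0-1)r^2}|\ell|$; reading time $0$ on $\ell$ as time $1$ on $\gamma$ (so $z_{N_0-1}$ on $\ell$ is $z_{N_0}$ on $\gamma$), this says $(\ln z_{N_0}-\ln z_1)/(N_0-1)\ge 2\mathcal E/3$ off $B_\ell$. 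Pulling each $B_\ell$ back through $\cf_0$, distortion together with $\sum_\ell m_\gamma(\cf_0^{-1}\ell)\le|\gamma|$ yields $m_\gamma(E_2)\le KCe^{-(N_0-1)r^2}|\gamma|$ for $E_2:=\bigcup_\ell \cf_0^{-1}B_\ell$. Off $E:=E_1\cup E_2$,
$$
\frac{\ln z_{N_0}-\ln z_0}{N_0}=\frac{\ln z_{N_0}-\ln z_1}{N_0}+\frac{\ln z_1-\ln z_0}{N_0}\ \ge\ \frac{N_0-1}{N_0}\cdot\frac{2\mathcal E}{3}-\frac{C_0}{N_0}\ \ge\ \frac{\mathcal E}{2}
$$
for $N_0$ large, where $C_0$ bounds $|\ln z_1-\ln z_0|$ after one revolution via \eqref{intrmd}; and on all of $\gamma$ one has $(\ln z_{N_0}-\ln z_0)/N_0\ge -c$ by Proposition \ref{M0}(4). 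Hence, writing $\delta:=\varepsilon+KCe^{-(N_0-1)r^2}$ (made small by taking $N_0$, then $\f$ and $\V$, large),
$$
\mathbf E_\gamma\Big[\frac{\ln z_{N_0}-\ln z_0}{N_0}\Big]\ \ge\ (1-\delta)\frac{\mathcal E}{2}-\delta c\ \ge\ \frac{\mathcal E}{3}\qquad\text{as soon as }\delta\le\frac{\mathcal E}{3\mathcal E+6c}.
$$

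I expect the main obstacle to be the two uniformity statements that make this quantifier order ($N_0$ first, then $\f$, $\V$) legitimate: (i) that complete curves in $R_1$ have a minimal length $a_0$ \emph{independent of} $\f$, so that Lemma \ref{main-subsect} can be invoked with a fixed $a$ rather than a threshold shrinking with $\f$; and (ii) that the end pieces of $\cf_0\gamma$ together with the pulled-back exceptional sets $B_\ell$ contribute a total measure $\delta|\gamma|$ that is uniform over long curves $\gamma$ and tends to $0$ as $\f,N_0\to\infty$. Both rest on the Distortion Control (Proposition \ref{dstr-bd}, with $K\to1$ as $\V\to\infty$) and on Remark \ref{lbd-f}, which is exactly what forces $|\cf_0\gamma|\gtrsim\f$; once these are secured, the remainder is the bookkeeping displayed above.
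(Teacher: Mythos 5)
Your scheme has a genuine gap that stems from a scaling error, and unfortunately the error is load-bearing. You write $\vartheta_1 = \kappa_1 K^2/\lambda_{\min}$ with ``$\lambda_{\min} \le c_2\max(|\dot f_1|,|\dot f_2|)$,'' treating $\lambda_{\min}$ as a half-revolution expansion rate. But the growth lemmas are stated for the full-revolution map $\cf_0 = P_0^2$ (the proof of Lemma \ref{1growthlemma} pulls back one step of $\cf_0$ to get the factor $1/\lambda_{\min}$, and the complexity $\kappa_1 = 4$ is the complexity of $\cf_0$), so $\lambda_{\min} = \lambda_{\cf} \ge c_1^2|\dot f_1||\dot f_2| \asymp \f^2$, not $\asymp \f$. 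Consequently $\vartheta_1 \asymp \f^{-2}$, and $|\cf_0\gamma|$ for a long curve $\gamma$ satisfies $2K^2 \le |\cf_0\gamma| \le \Lambda_{\cf}\vartheta_1 \le 4K^2(c_2/c_1)^2$: it is bounded \emph{above and below} by constants independent of $\f$, and does not tend to infinity. The displayed computation ``$|\cf_0\gamma|\ge C\f\to\infty$'' is false, and with it the claim $m_\gamma(E_1)\le\varepsilon|\gamma|$ collapses: a curve of bounded length $\asymp 1$ that is only a small multiple of the minimal complete-curve length $a_0$ can have end pieces constituting a non-negligible fraction of its total length.

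The paper's proof avoids this by a different decomposition. Since $|\gamma|\asymp\vartheta_1\le\delta_0$, the complexity control applies directly and $\cf_0\gamma$ is cut into at most $4$ pieces whose total length $b=|\cf_0\gamma|$ is bounded below by an $\f$-independent constant. One then chooses a parameter $\rho$ and extracts a sub-collection $\Gamma_0$ of at most $4$ pieces, each of length $\ge b\rho/4$ (a fixed constant), together covering $\ge (1-\rho)b$; Lemma \ref{main-subsect} is applied to those pieces (this is legitimate because their length is bounded below by a constant independent of $\f$), while the leftover mass $\le\rho$ is controlled by the universal drift bound $(4)$ of Proposition \ref{M0}, which gives $|\ln z_{N_0}-\ln z_0|/N_0 \le c$ everywhere. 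Taking $\rho$ small and then $\f, N_0$ large yields $\mathcal E/3$. Your overall plan — push the expectation from fixed-size curves to long curves and absorb the bad set using the universal bound — is the right one, and your remaining bookkeeping would be fine; but you need the sub-collection / threshold argument rather than the ``end pieces are negligible because $|\cf_0\gamma|\to\infty$'' argument, because the latter premise fails.
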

\begin{proof}
Recall that by the discussion at the beginning of the previous Lemma we have that $f_1 \neq f_2$ implies $\mathcal{E}>0$.

According to the discussion at the beginning of Subsection \ref{grwth} the image of  every curve $\gamma$ of size $\vartheta_1$ will be cut into at most 4 pieces under the map $\cf_0 \gamma$. Note that $|\mathcal{F}_0\gamma|$ will also have length uniformly bounded from below by a number that is independent of $\Lambda_1,\Lambda_2$, due to Remark \ref{lbd-f}. Let $\Gamma=\{\gamma_k\}_{k=1}^s$, $s\leq 4$ be the curves so that their lengths are ordered in a decreasing order and
$$
\sum_{s}|\gamma_s|=|\cf_0 \gamma|=b.
$$
Take some $\rho<1$. We now choose a sub-collection $\Gamma_0$ of curves from $\Gamma$. We have that $|\gamma_1| \geq b/4$. If $|\gamma_1|>b(1-\rho)$ we define $\Gamma_0=\{\gamma_1\}$ and stop. Otherwise, if $|\gamma_1|<b(1-\rho)$, then note that
$$
\sum_{s>1}|\gamma_s| \geq b - b(1-\rho)=b \rho.
$$
Then $|\gamma_2| \geq b\rho/3>b\rho/4$. If now $|\gamma_1| + |\gamma_2| > b(1-\rho)$, then we stop and set $\Gamma_0=\{\gamma_s\}_{s=1}^2$. Otherwise we will either have $|\gamma_3|>b\rho/4$ and subsequently $|\gamma_4|>b\rho/4$ or we can find a sub-collection $\Gamma_0$ so that $\sum_{\gamma_0 \in \Gamma_0}|\gamma_0|>b(1-\rho)$ and $|\gamma_0|>b\rho/4$ for every $\gamma_0 \in \Gamma_0$.

Since for $\gamma_0 \in \Gamma_0$ we have that $|\gamma_0|>\rho|\cf_0 \gamma|/4$ then for fixed $\rho$, by Lemma \ref{main-subsect}, we can choose $N_0$ and $\f$ in such a way that
$$
\mathbf{E}_{\gamma_0}\Big[\frac{\ln z_{N_0} - \ln z_0}{N_0}\Big]\geq \frac{\mathcal E}{2}.
$$
Next, by the choice of $\rho$ and in view of \eqref{bound} 
$$
\mathbf{E}_{\gamma \setminus \cup_{\gamma_0 \in \Gamma_1}\gamma_0}\Big[\left|\frac{\ln z_{N_0} - \ln z_1}{N_0}\right|\Big]\leq K\rho c,
$$
Hence, by choosing $\rho$ small enough (and respectively increasing the values of $\f$ and $N_0$) we will have
$$
\mathbf{E}_\gamma\Big[\frac{\ln z_{N_0} - \ln z_1}{N_0} \Big]\geq \frac{1}{K}\frac{\mathcal E}{2}(1-\rho)- K\rho c  \geq \frac{\mathcal E}{3}.
$$

\end{proof}


\section{Exponential Acceleration}

In the previous section we showed that for any sufficiently large $N_0$ and for an appropriate choice of $\f$ and $\V$, energy growth can be achieved for a substantial portion of initial conditions $\varrho\in \gamma$ uniformly for every long curve $\gamma$ above $\V$. After $N_0$ many iterations $\cf_0^{N_0}\gamma$ will consist of multiple unstable curves some of which will be long and some will be short. If we had only long curves, then after another $N_0$ many iterations ($\cf_0^{2 N_0}$) each of the long curves would produce some further exponential acceleration. Continuing like this we would get acceleration at each step for a large set of initial conditions. Understandably, this cannot continue forever and there will eventually emerge some short curves which may not produce acceleration after $N_0$ may iterations. In order to overcome this problem, we will need to iterate these curves until they become long. The tool for keeping track of the waiting times in this process will of course be the growth lemmas proven in Section 6. One can recognize the  procedure described above as the first step $\hat N_1(\varrho)$ defined in Section \ref{dv-est}.
After this, we will repeat the procedure and subsequently define $\hat N_2(\varrho), \hat N_3(\varrho)$ etc.

Of course, when $\cf_0^n \varrho$ is in a short curve and it is taking the curve a long time to grow, the energy of the ball may change uncontrollably, start to decrease and drop even below the level it had started with. As a result of this the entire energy gain up until that time will be lost. However, we will show that this won't happen too often and the exponential energy growth obtained in finite intervals of length $N_0$ will eventually persist in infinite time for a set of initial conditions of large measure.

To handle the iteration process described above, we prove a moment estimate.
Let $\Delta(\varrho)=\ln z_{\hat{N}(\varrho)}-\ln z_0$, where $\hat N$ is defined before Lemma \ref{pr-delayed}.

\begin{prop}\label{Lm1}
In Prop. \ref{N0} the parameters $N_0$, $\f$ and $\V$  can be chosen in such a way that there will exist constants $\kappa>0, \vartheta<1$ depending on $f_1, f_2, \mathcal E$ so that for the modified map $P_0$, defined in Prop. \ref{M0}, and for every long curve $\gamma$ in  $R_1$ we have that
$$
\E_\gamma[e^{-\kappa \Delta}]\leq \vartheta.
$$
\end{prop}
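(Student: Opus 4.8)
The plan is to split the expectation of $e^{-\kappa\Delta}$ over the long curve $\gamma$ according to the value of the first-hitting time $\widehat N(\varrho)$, which records when the orbit of $\varrho$ next lands on a long curve. On the set $\{\widehat N = \ell\}$ we control $\Delta(\varrho) = \ln z_{\widehat N}-\ln z_0$ from below using the two complementary pieces of information already assembled: first, for the initial $N_0$ steps the moment bound of Proposition \ref{N0} gives $\mathbf E_\gamma[(\ln z_{N_0}-\ln z_0)/N_0]\geq \mathcal E/3$, so a substantial portion of initial conditions have accumulated genuine exponential gain after $N_0$ steps; second, between step $N_0$ and step $\ell$ the energy can at worst deteriorate at the controlled rate $c$ of the uniform bound \eqref{bound} in Proposition \ref{M0}, i.e. $|\ln z_\ell - \ln z_{N_0}|\leq c(\ell-N_0)$. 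Combining, $\Delta(\varrho)\geq \tfrac{\mathcal E}{3}N_0 - c(\ell-N_0)$ off a bad set, and on the bad set we still have $\Delta(\varrho)\geq -c\ell$.

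First I would fix $\kappa>0$ small (to be pinned down at the end). Write
$$
\E_\gamma[e^{-\kappa\Delta}] = \sum_{\ell > N_0} \E_\gamma\!\left[e^{-\kappa\Delta}\,\mathbf 1_{\widehat N = \ell}\right].
$$
On $\{\widehat N=\ell\}$, away from the bad set $A_{N_0}$ of Proposition \ref{N0} (which, recall, has $m_\gamma(A_{N_0})\leq Ce^{-N_0r^2}|\gamma|$), we bound $e^{-\kappa\Delta}\leq \exp\!\big(-\kappa\tfrac{\mathcal E}{3}N_0 + \kappa c(\ell - N_0)\big)$, and using Lemma \ref{pr-delayed} the measure of $\{\widehat N=\ell\}$ is at most $b\vartheta_4^{\ell - N_0}|\gamma|$. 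On the bad set we use the crude bound $e^{-\kappa\Delta}\leq e^{\kappa c\ell}$ together with $m_\gamma(A_{N_0})\leq Ce^{-N_0 r^2}|\gamma|$. Thus
$$
\E_\gamma[e^{-\kappa\Delta}]/|\gamma| \;\leq\; e^{-\kappa\mathcal E N_0/3}\sum_{\ell>N_0} b\,\vartheta_4^{\ell-N_0} e^{\kappa c(\ell-N_0)} \;+\; C e^{-N_0 r^2}\sum_{\ell > N_0} e^{\kappa c\ell}\vartheta_4^{\ell-N_0}.
$$
Choosing $\kappa$ small enough that $\vartheta_4 e^{\kappa c}<1$ makes both geometric series converge (the first to a constant times $b$, the second to a constant times $e^{\kappa c N_0}$), giving a bound of the shape $C_1 e^{-\kappa\mathcal E N_0/3} + C_2 e^{-(r^2 - \kappa c)N_0}$. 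Since $\mathcal E>0$ whenever $f_1\neq f_2$ (the Kullback–Leibler positivity recalled in Lemma \ref{main-subsect}), shrinking $\kappa$ further so that $r^2 - \kappa c>0$ makes the whole right-hand side strictly less than $1$ once $N_0$ is taken large; calling this value $\vartheta<1$ finishes the argument. The constants $\kappa,\vartheta$ depend only on $f_1,f_2,\mathcal E$ since $r$, $c$, $b$, $\vartheta_4$ do (by the uniformity statements in Remark \ref{lbd-f}, Proposition \ref{M0} and Lemma \ref{pr-delayed}).

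The main obstacle is making the two "time budgets" fit together quantitatively: the gain over the first $N_0$ steps is linear in $N_0$ with rate $\mathcal E/3$, but the potential loss over the waiting time $\ell - N_0$ is also linear, with the (possibly large) rate $c$ from \eqref{bound}, and $\ell$ is unbounded. The point is that the waiting time has an exponential tail (Lemma \ref{pr-delayed}), so the summation over $\ell$ contributes only a finite constant provided $\kappa$ is taken below $\tfrac{1}{c}\log(1/\vartheta_4)$; once that is arranged, the decisive factor is the fixed head start $e^{-\kappa\mathcal E N_0/3}$, which we can drive below $1$ by choosing $N_0$ large — exactly the freedom granted in the statement of Proposition \ref{N0}. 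A secondary technical point is that $\Delta$ is defined along $\cf_0$ (complete revolutions) while $\widehat N$ counts iterates of $\cf_0$; one must be careful that the bound \eqref{bound} is phrased for $z_n$ along complete revolutions, which it is, so no mismatch arises. One should also verify that the distortion constant $K$ entering Lemma \ref{pr-delayed} does not spoil the uniformity, but Proposition \ref{dstr-bd} gives $K\to 1$ as $V_*\to\infty$, so this is harmless.
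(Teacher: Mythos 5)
The overall strategy — split by the waiting time $\widehat N$, use Proposition~\ref{N0} for the gain over the first $N_0$ steps, use the bound \eqref{bound} for the potential loss afterwards, and close the geometric series via Lemma~\ref{pr-delayed} — is in the right spirit, but there is a gap that the paper's proof avoids.

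You need a \emph{pointwise} lower bound $\ln z_{N_0}-\ln z_0\ge \tfrac{\mathcal E}{3}N_0$ off a bad set $A_{N_0}$ of measure $\le Ce^{-N_0 r^2}|\gamma|$. That exponentially small bad set exists in the proof of Lemma~\ref{main-subsect}, but that lemma requires $|\gamma|>a$; Proposition~\ref{N0} is exactly the reduction to long curves, and its proof obtains the expectation bound by pushing $\gamma$ forward under $\cf_0$, selecting sub-curves $\Gamma_0$ with total mass $\ge(1-\rho)|\cf_0\gamma|$, and applying Lemma~\ref{main-subsect} only to those. The complement consists of the uncollected pieces, whose pull-back to $\gamma$ has measure $\sim K\rho|\gamma|$ — a fixed constant, not decaying with $N_0$. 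On that piece you only have the crude bound $\Delta\ge -c\,\widehat N$, so the corresponding contribution to $\E_\gamma[e^{-\kappa\Delta}]$ is at least of order $\rho\, e^{\kappa c N_0}$, which blows up rather than tending to zero as $N_0\to\infty$. Your term $C_2e^{-(r^2-\kappa c)N_0}$ therefore overstates the decay available. A secondary issue is that you bound $m_\gamma(A_{N_0}\cap\{\widehat N=\ell\})$ by the \emph{product} $Ce^{-N_0r^2}\cdot b\vartheta_4^{\ell-N_0}$ of the two individual bounds, but the two events are not independent; one only has the minimum, not the product.

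The paper sidesteps both problems by never leaving expectation land: it sets $\kappa=\eta/N_0$ so that $\kappa\Delta$ is uniformly small on $\{\widehat N\le 2N_0\}$ (using $|\Delta|\le c\,\widehat N$), and Taylor-expands $e^{-\kappa\Delta}\le 1-\kappa\Delta+(\kappa\Delta)^2$ there. The linear term is then controlled by $\E_\gamma[\Delta]\ge\tfrac{\mathcal E}{3}N_0+O(1)$, which is exactly the content of Proposition~\ref{N0}; the quadratic term is $\le 4\eta^2$; and the tail $\{\widehat N>2N_0\}$ is handled by the exponential moment bound from Lemma~\ref{pr-delayed} as you do. Choosing $\eta$ small and then $N_0$ large finishes the argument without ever needing a pointwise lower bound off a small set. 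If you want to keep a version of your argument, you would need to strengthen Proposition~\ref{N0} to produce an exponentially small bad set for long curves — e.g.\ by letting $\rho$ depend on $N_0$ — and then check that the other constants in the proof of Proposition~\ref{N0} survive that modification, which is not automatic.
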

\begin{proof}
Let $\kappa=\eta/N_0$ where $\eta$ is such that for $|s| \leq 2\eta$ we have
$$
e^{-s}\leq 1 - s + s^2.
$$
Then
$$
\E_\gamma[e^{-\kappa \Delta}]=\E_\gamma[e^{-\kappa \Delta}1_{\hat N\leq 2N_0}]+\E_\gamma[e^{-\kappa \Delta}1_{\hat N> 2N_0}]
$$
$$
\leq 1 - \E_\gamma[\kappa \Delta 1_{\hat N \leq 2N_0}]+\E_\gamma[(\kappa \Delta)^2 1_{\hat N \leq 2N_0}]+\E_\gamma[e^{-\kappa \Delta} 1_{\hat N>2N_0}].
$$
Since by Proposition \ref{M0} we have that $|\kappa \Delta|=\kappa |\ln z_{\hat{N}(x)}-\ln z_0|\leq \kappa c\hat N $. Then
$$
\E_\gamma[e^{-\kappa \Delta}]\leq 1 - \E_\gamma[\kappa \Delta]+4\eta^2 + \E_\gamma[(e^{-\kappa \Delta}+\kappa \Delta) 1_{\hat N>2N_0}].
$$
Next
$$
\E_\gamma[\Delta]=\E_\gamma\left[\ln z_{N_{0}}-\ln z_{0}\right]+\E_\gamma\left[\ln z_{\hat N}-{\ln x}_{N_{0}}\right] 
$$
For the second term by Lemma \ref{pr-delayed}
$$
\E_\gamma\left[|\ln z_{\hat N}-\ln {z}_{N_{0}}|\right]\leq Cb\sum_{m=N_0}^\infty (m-N_0) \vartheta_4^{m-N_0} <\infty.
$$
For the first term, by Proposition \ref{N0} we have that
$$
\E_\gamma\left[\ln z_{N_{0}}-\ln z_{0}\right]\geq \frac{\mathcal E}{3} N_{0}.
$$

Thus
$$
\E_\gamma[\Delta]=\frac{\mathcal EN_0}{2}+O(1)=\Big(\frac{\mathcal E}{3} +o(1)\Big)N_0.
$$
Next note that
$$
\E_\gamma[(e^{-\kappa \Delta}+\kappa \Delta) 1_{ \hat{N}\geq 2N_0}]\leq 2\E[e^{\kappa |\Delta|} 1_{ \hat{N}\geq 2N_0}]
$$
\begin{align}
&\leqslant C_0b \sum_{m \geq 2 N_{0}} e^{c\kappa\left(m-N_{0}\right)} \vartheta_4^{\left(m-N_{0}\right)}=C_0b \sum_{m \geq N_{0}} (e^{c\kappa } \vartheta_4)^{m}
\end{align}
We now take $N_0$ so large that $e^{c\kappa}$ is small, with $e^{c\kappa}\vartheta_4<1$. Note that $N_0$ can be increased without affecting the values of $c,b, \vartheta_4$ (however not the size of $\gamma$). This is done as follows. We first take $\f$ large so that we have unstable cone invariance and that the constants in Lemma \ref{pr-delayed} are uniform for all $|\dot f_1|, |\dot f_2|>\f$. Thus, the constants $c,b, \vartheta_4$ will be fixed for all large $\f$. Thus, to achieve Proposition \ref{N0}, it will remain to further increase the values $\f$ and $\V$.

We will have from above that
$$
\E_\gamma[e^{-\kappa \Delta}+\kappa \Delta) 1_{ \hat{N}\geq 2N_0}] \leq C^{*}\left(e^{c\kappa} \vartheta_4\right)^{N_{0}}.
$$
Summarizing
$$
\E_\gamma[e^{-\kappa \Delta}]\leq 1 -  \Big(\frac{\mathcal E}{2} +o(1)\Big)\eta  + 4\eta^2 + C^{*}e^{c\eta} \vartheta_4^{N_{0}}.
$$
We remark that the parameters $\kappa, \eta$ are independent and they can be made arbitrarily small at the expense of taking $N_0$ large.
We first take $\eta$ small so that $- \Big(\frac{\mathbf{v}}{2} +o(1)\Big)\eta+4\eta^2$ is negative. Then, we keep $\eta$ fixed and increasing $N_0$, so that $e^{c\eta} \vartheta_4^{N_0}$ becomes small compared with the remaining terms. As a result $\E_\gamma[e^{-\kappa \Delta}]$ will become less than $1$. 

We remark that the constants $\kappa, \vartheta$ and $c$ depend on the properties of the modified system $P_0$ and the constants in the growth Lemma \ref{pr-delayed}. The properties of $P_0$, in turn, depend on $f_1, f_2, \mathcal{E}$, but not on the parameters $\dot f_1, \dot f_2$. Recall, however, that this due to the choice of $\V$ and $\V=\V(\dot f_1, \dot f_2, f_1, f_2)$.
The constants in the growth lemmas depend only on $f_1, f_2$, for large $\f$, as they measure the distance between singularity lines and hence contribute into the complexity estimates.
The quantity $\mathcal{E}$ measures how close $f_1$ and $f_2$ are and it contributes into the rate of the drift. Thus, we conclude that for sufficiently large $\f$ and $\V$, the constants $\kappa$ and $\vartheta$ depend on $f_1, f_2$ and $\mathcal{E}$, but not on $\dot f_1$ and $\dot f_2$.
This completes the proof.
\end{proof}

For $n \geq 1$, let $\Delta_n = \ln z_{\hat N_n(\varrho)}-\ln z_{\hat N_{n-1}(\varrho)}$. Set $t_n=\Delta_n+ \dots + \Delta_1=\ln z_{\hat N_n(\varrho)}-\ln z_{0}$.

\begin{prop}\label{mn-estm}
Under the assumptions of Proposition \ref{Lm1}, one can find for the modified system $P$ some positive numbers $\alpha, \beta, N_*>0$, depending on $f_1, f_2, \mathcal{E}$, so that for all $N\geq N_*$ and all long curves $\gamma$ we have that
$$
m_\gamma(\varrho=(x_0,z_0): z_n \geq e^{\alpha n}z_0, \text{ for all }n \geq N)\geq 1 - e^{-\beta N}.
$$
\end{prop}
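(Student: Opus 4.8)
The plan is to bootstrap the one--step moment estimate of Proposition~\ref{Lm1} along the sequence of return times $\hat N_n$, to combine it with the large--deviation bound of Lemma~\ref{dev-prop} for $\hat N_n$ and the tail bound of Lemma~\ref{pr-delayed} for the gaps $\hat N_{n+1}-\hat N_n$, and to conclude by a Borel--Cantelli argument over indices $j\ge\lfloor c_1 N\rfloor$.

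First I would upgrade Proposition~\ref{Lm1} to a bound on $t_n=\ln z_{\hat N_n}-\ln z_0$. Let $\mathcal A_n$ be the $\sigma$--algebra generated by the level sets of $\hat N_n$ on $\gamma$, as in the proof of Lemma~\ref{dev-prop}; on each atom $A$ the map $\cf_0^{\hat N_n}$ sends $A$ onto a sub--arc of a long curve, so Proposition~\ref{Lm1} together with the distortion control of Proposition~\ref{dstr-bd} gives $\E_\gamma[e^{-\kappa\Delta_{n+1}}\mid\mathcal A_n]\le K\vartheta$ on $A$. Since $\cf_0$ keeps the energy above $\V$ (Proposition~\ref{M0}(2)) and unstable curves are nearly vertical, $t_n$ oscillates by only $O(|\gamma|/\V)$ on each atom and may be pulled out of the conditional expectation at a cost $e^{O(\kappa/\V)}$; iterating the tower property then yields
$$\E_\gamma\big[e^{-\kappa t_n}\big]\le\tilde\vartheta^{\,n}\,|\gamma|,\qquad \tilde\vartheta:=e^{O(\kappa/\V)}K\vartheta<1$$
for $\V$ large, whence by Markov's inequality, for any $\delta>0$ with $e^{\kappa\delta}\tilde\vartheta<1$,
$$m_\gamma\big(t_n<\delta n\big)\le\big(e^{\kappa\delta}\tilde\vartheta\big)^{n}|\gamma|.$$
Because Proposition~\ref{Lm1} permits $\kappa\sim\eta/N_0$ with $\ln(1/\vartheta)\gtrsim\eta\mathcal E$, the exponent $\delta$ may be taken of order $N_0\mathcal E$; I fix it, together with the constant $\epsilon'$ introduced below, so that $\epsilon'<\delta/(2c)$, where $c$ is the per--revolution log--energy constant behind~\eqref{bound}.

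Next I would form the good set. For the deterministic threshold $N$ set $N'=\lfloor c_1 N\rfloor$ with $c_1<1/a$, where $a=a(N_0)$ comes from Lemma~\ref{dev-prop}, and let $G_N$ be the complement of
$$B^{(1)}_N=\bigcup_{j\ge N'}\{\hat N_j>aj\},\qquad B^{(2)}_N=\bigcup_{j\ge N'}\{\hat N_{j+1}-\hat N_j>\epsilon' j\},\qquad B^{(3)}_N=\bigcup_{j\ge N'}\{t_j<\delta j\}.$$
Lemma~\ref{dev-prop} bounds $m_\gamma(B^{(1)}_N)$ by a geometric tail in $j\ge N'$; Lemma~\ref{pr-delayed}, applied conditionally on each $\mathcal A_j$ and absorbing the distortion $K$, bounds $m_\gamma(B^{(2)}_N)$ by a geometric tail once $\epsilon' N'>N_0$; and the previous paragraph bounds $m_\gamma(B^{(3)}_N)$ similarly. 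Summing, $m_\gamma(G_N^{c})\le C\mu^{N'}|\gamma|\le e^{-\beta N}|\gamma|$ for appropriate $\mu<1$, $\beta>0$ and all $N\ge N_*$. Finally, on $G_N$ and for any $n\ge N$, let $k=\max\{j\ge0:\hat N_j\le n\}$; since $\hat N_{N'}\le aN'<N\le n$ we have $k\ge N'$, and then $n<\hat N_{k+1}\le a(k+1)$ gives $k\ge n/(2a)$ for $n\ge 2a$. Splitting $\ln z_n-\ln z_0=(\ln z_n-\ln z_{\hat N_k})+t_k$, the last term is $\ge\delta k$ on $(B^{(3)}_N)^{c}$, while~\eqref{bound} bounds the first by $c(n-\hat N_k)\le c(\hat N_{k+1}-\hat N_k)\le c\epsilon' k$ on $(B^{(2)}_N)^{c}$. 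Hence $\ln z_n-\ln z_0\ge(\delta-c\epsilon')k\ge\frac{\delta}{4a}\,n=:\alpha n$ for all $n\ge N$, which is the asserted inequality; the constants $\alpha,\beta,N_*$ are built out of $N_0,\kappa,\vartheta,\vartheta_4,b,a,c,\delta,\epsilon'$, all of which depend only on $f_1,f_2,\mathcal E$ once $\f,\V$ are large, as recorded in Proposition~\ref{Lm1}.

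The main obstacle is making the conclusion uniform over \emph{all} $n\ge N$ rather than only at the return times: one must carry the bounds $\hat N_k\le ak$, $\hat N_{k+1}-\hat N_k\le\epsilon' k$ and $t_k\ge\delta k$ simultaneously, and check that the admissible $\delta$ (dictated by the small exponent $\kappa$ of Proposition~\ref{Lm1}) stays comparable to $a$ and $c$ so that $\delta-c\epsilon'>0$; this is where the careful tracking of how all constants depend only on $f_1,f_2,\mathcal E$ pays off. A secondary technical point is the tower--property iteration above, where $t_{n-1}$ is not $\mathcal A_{n-1}$--measurable and one relies on the oscillation bound coming from the energy remaining above $\V$ in the modified system.
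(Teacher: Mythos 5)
Your proposal is correct and follows essentially the same strategy as the paper's proof: iterate the exponential--moment estimate of Proposition~\ref{Lm1} through the filtration of return times to obtain a geometric bound on $m_\gamma(t_n < \delta n)$, combine it via a union bound with the deviation estimate from Lemma~\ref{dev-prop}, and then transfer acceleration at the return times $\hat N_n$ to all times $n\ge N$ by controlling what can happen inside a single gap $[\hat N_k,\hat N_{k+1})$. The only substantive difference is in the handling of the gaps: the paper argues by contrapositive, showing that if the log--energy drops by more than $(\alpha-\alpha_1)n$ within a gap then Proposition~\ref{M0}(3) forces that gap to exceed $vn$, which is then ruled out on a good set by Lemma~\ref{pr-delayed}; you instead add a third bad event $B_N^{(2)}$ that bounds $\hat N_{j+1}-\hat N_j$ a priori and then use the uniform per--revolution bound~\eqref{bound} to cap the oscillation inside a gap. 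These are equivalent uses of Lemma~\ref{pr-delayed}, and your version is somewhat cleaner because it avoids the paper's extra interpolation step from times $[an]$ to $[a(n+1)]$. You also flag explicitly the point that $t_{n-1}$ is not exactly $\mathcal A_{n-1}$--measurable and accounts for it via the small oscillation of $\ln z$ on a long curve above $\V$ together with the distortion constant $K$; the paper glosses over this, so your added care is a genuine improvement in rigor, not a deviation in method.
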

\begin{proof}
Let $\mathcal{F}_k$ be the $\sigma$ algebra generated by the partition of $\gamma$ by $\hat N_k$ up to time $k$. Then by Lemma \ref{Lm1}
$$
\E_\gamma[e^{-\kappa (t_n-t_1)}]=\E_\gamma[\E_\gamma[e^{\kappa(t_n -t_1)}|\mathcal{F}_{n-1}]]=\E_\gamma[e^{\kappa(t_{n-1}-t_1)}\E[e^{\kappa \Delta_k}|\mathcal{F}_{n-1}]]
$$
$$
\leq \vartheta\E_\gamma[e^{-\kappa (t_{n-1}-t_1)}].
$$
Iterating
$$
\E_\gamma[e^{-\kappa (t_{n}-t_1)}]\leq \vartheta^n.
$$
Then for any $\lambda>0$
$$
e^\lambda m_\gamma(e^{-\kappa t_n} \geq  e^\lambda) \leq \vartheta^n |\gamma|.
$$
Take $\lambda=-\kappa \alpha n$. Then
$$
m_\gamma(e^{-\kappa t_n}\geq e^{-\kappa \alpha n}) \leq e^{\kappa \alpha n}\vartheta^n |\gamma|.
$$
Hence
$$
m_\gamma(t_n< \alpha n) \leq e^{\kappa \alpha n}\vartheta^n |\gamma|=e^{n(\kappa \alpha - |\ln \vartheta|)}|\gamma|.
$$
Obviously, if we take $\alpha$ sufficiently small then $a=-(\kappa \alpha - |\ln \vartheta|)>0$. Hence
$$
m_\gamma(\Delta_1 + \dots +\Delta_n\leq \alpha n) \leq e^{-an}|\gamma|.
$$

Equivalently
$$
m_\gamma(\varrho: z_{\hat N_n(\varrho)}\leq e^{n \alpha} z_0)\leq e^{-an}|\gamma|.
$$
Hence, there exists $c_0, N_0>0$ so that for all $N \geq N_0$
$$
m_\gamma(\varrho=(x_0, z_0): z_{\hat N_n(\varrho)}> e^{n \alpha} z_0\text{ for all }n \geq N) \geq (1-e^{-N c_0})|\gamma|.
$$
Thus, we have shown that there is energy acceleration at times $\{\hat N_n(\varrho)\}_{n}$ on a set of large measure. We now show that the acceleration will persist between the times $\hat N_n(\varrho)$ and $\hat N_{n+1}(\varrho)$ as well. If it won't and the energy between these times drops very low, then the interval $[\hat N_n(\varrho),\hat N_{n+1}(\varrho)]$ will have to be very long. But this can't happen too often due to the growth Lemma \ref{pr-delayed}.  Once energy acceleration is achieved for all times $n$, it will then remain to use the deviation bound in Lemma \ref{dev-prop} to transition from times $\hat N_n(\varrho)$ to $[an]$.

By Lemma \ref{dev-prop} we have that
$$
m_\gamma(\varrho:\hat N_n(\varrho)>an)\leq \vartheta_5^n |\gamma|.
$$
Hence, as above there exists $c_1, N_1>0$ so that for all $N \geq N_1$
$$
m_\gamma(\varrho:\hat N_n(z_0)>an: \text{ for all }n \geq N)\geq (1 - e^{-c_1 N})|\gamma|.
$$
Combining the estimates above, we can find constants $c_2, N_2$, so that for all $N \geq N_2$ there exists subsets $A_{N}\subset \gamma$ so that $m_\gamma(A_N)\geq (1-e^{-c_2 N})|\gamma|$ and for all $\varrho \in A_N$ and all $n \geq N$
$$
\hat N_n(\varrho)\leq  an, \quad \text{ and }  z_{\hat N_n(\varrho)}> e^{n \alpha} z_0.
$$
Let $\alpha_1<\alpha$ and assume that for some $\varrho \in A_N$ and $n \geq N$ there is an integer $1 \leq m(\varrho) \leq \hat N_{n+1}(\varrho) - \hat N_{n}(\varrho)$ so that
$$
z_{\hat N_n(\varrho) + m(\varrho)}<e^{n \alpha_1}z_0.
$$
Then, by Proposition \ref{M0} (3), for $d=\ell\left(\frac{1-f_2}{1-f_1}\right)$ we will have that
$$
d^m e^{n \alpha}z_0\leq d^m z_{\hat N_n}\leq z_{\hat N_n + m}\leq e^{n \alpha_1}z_0.
$$
Then
$$
d^m \leq e^{n(\alpha_1-\alpha)}.
$$
Hence
$$
m\geq \frac{n(\alpha_1-\alpha)}{\ln 1/d} = vn.
$$
Let $\gamma_n(\varrho)$ be the long curve that contains $\varrho$ at time $\hat N_n(\varrho)$. Then by Lemma \ref{pr-delayed} we have that $m_{\gamma_n}(\varrho: m(\varrho)\geq vn)\leq m_{\gamma_n}(\varrho: \hat N_{n+1}(\varrho)\geq vn)\leq b \vartheta_4^{[vn]-N_0}$. Thus, we will have that for some $c_3=c_3(\vartheta_4, N_0)$
$$
m_{\gamma_n}(\varrho: \exists m \text{ so that } z_{\hat N_n(\varrho) + m}<e^{n \alpha_1}z_0
)
\leq C\theta^{-N_0} (\theta^v)^{n}\leq e^{-c_3n}.
$$
Thus, there exists $c_4, N_4>0$ so that for all large $N \geq N_4$
$$
m_{\gamma}(\varrho: \text{ there exists } n\geq N \text{ and } m\geq 1, \text{ so that } z_{\hat N_n(\varrho) + m}<e^{n \alpha_1}z_0
)\le e^{-c_4N}.
$$
Adding this to the sets above, we see that there exist constants $c_5, N_5>0$ such that for all $N \geq N_5$ one can find a subset $B_N \subset A_N$ so that $|B_N|\geq 1-e^{-c_5N}$ and for all $\varrho \in B_N$ we have for all $n \geq N$ that
$$
\hat N_n(\varrho)<an, \quad   z_{\hat N_n(\varrho)}> e^{n \alpha} z_0,
$$
and for all $n \geq N$ and $\ell$, with $\hat N_n<\ell<\hat N_{n+1}$ we have
$$
z_\ell \geq e^{\alpha_1 n}z_0.
$$
Now consider the times $[an]$ for $n \geq N$. There exists $k$ so that $\hat N_k \leq [an]\leq \hat N_{k+1}$. Then for every $\varrho \in B_N$ and $n \geq N$
$$
z_{[an]}(\varrho)\geq e^{k \alpha_1} z_0(\varrho).
$$
But we have that $[an]>\hat N_n(\varrho)$, hence $k \geq n$. Thus
$$
z_{[an]}\geq e^{k \alpha_1} z_0 \geq e^{[an] \frac{\alpha_1}{a}} z_0.
$$
Between the times $[an]$ and $[a(n+1)]$, $z_k$ can decrease by at most $d^{a}$. Hence for all $[an]\leq s \leq [a(n+1)]$, and $\varrho \in B_N$ we will have that
$$
z_{s}\geq d^{a} e^{[an]\frac{\alpha_1}{a}} z_0\geq e^{s \frac{\alpha_2}{a}}z_0=e^{s \frac{\alpha_0}{a}}z_0.
$$
To get rid of $a$ in front of $N$ we will take $N$ larger. 
Thus we have shown that there exists $c_6,N_6>0$ so that for all $N\geq N_6$ there exists a set $B_N \subset \gamma$, with $m_\gamma(B_N)>1-e^{-c_6 N}$ so that for all $\varrho \in B_N$ we have for all $n \geq N$ that
$$
z_n \geq z_0 e^{(\alpha_0/a) n}.
$$
To finish the proof we set $\beta=c_6$, $\alpha=\alpha_0/a$ and $N_*=N$.
We remark that the last constants depend on the properties of the modified system $P_0$, which only depend on $f_1, f_2, \mathcal{E}$, if $\f$ and $\V$ are large enough.
\end{proof}

\medskip

Finally we show that the original system $P$ inherits a positive measure of exponentially escaping orbits from the modified system as they coincide above $V_0$.

\begin{proof}[Proof of main Theorem \ref{expAcc}]
We repeat the procedure of choosing the parameters $\f$ and $\V$ which has already been discussed above.
We first choose $\f$ so large that we have invariant unstable cones as in Proposition \ref{pcone} and the constants in the growth Lemma \ref{pr-delayed} are uniform for all $|\dot f_1|, |\dot f_2|>\f$. Note that, we first chose $\f$ and then $\V$, since $\V$ depends on the parameters $f_1, f_2,\dot f_1,\dot f_2$.
Note also, that the change of $\f$ is compensated by the decay of the size of long curves. 
Then, to assure the moment estimate in Proposition \ref{Lm1}, $N_0$ needs to be taken large. Its size will only depend on the constants in the growth lemma and $\mathcal{E}, f_1, f_2$. 
Since these constants are already fixed, we will only need to make the quantities $\f$ and $\V$ even larger. $\mathcal{E}$ contributes into the growth rate constant $\alpha$ and it measures how close $f_1$ and $f_2$ are.
Thus, for an appropriate choice of $\f$ and $\V$ we will have Proposition \ref{Lm1}. This, in turn, will imply Proposition \ref{mn-estm}.
It now remains to transition from the modified system $P_0$ to the original system $P$. Recall that by Proposition \ref{M0} the two systems coincide above $V_0$.

By Proposition \ref{mn-estm} there exist numbers $\alpha,\beta, N_*$ so that for all $N\geq N_*$ we have for every long curve $\gamma$ on the vertical line $\{\sigma=\sigma_0\}$ ($\sigma_0\in (0,2)$) that there is a set $A_N \subset \gamma$, with $m_\gamma(A_N)>(1-e^{-\beta N})|\gamma|$, so that for all $n \geq N$ and $\varrho =(x_0, z_0)\in A_N$ we have that 
$$
z_{n}(\varrho) \geq e^{\alpha n}z_0.
$$
Note that for all $n \geq N$ we have that $z_{n}(\varrho) \geq e^{\alpha N}z_0>z_0$. Hence, $z_n>z_0$ for all $n \geq N$. 
If we now take $z_0 \geq V_0 (1/d)^{N}$ then for all $n \leq N$ we will have that $z_{n} \geq V_0$. Thus, for all $\varrho \in A_N$ and $n\geq 1$ we will have that $z_n(\varrho)>V_0$. This means that the energy of the ball will never go below the threshold $V_0$ so our analysis in Proposition \ref{M0} will coincide with that of the original system $P$. If we now take $\iota$ so that $V_0 (1/d)^{N}=\V\iota^{N}$ and consider a foliation of $R_1 \cap [V_1, V_2]$ with long curves, then the result will follow. 
\end{proof}

\bibliography{ExponentialFermiAccelerationinSwitchingBilliard}
\bibliographystyle{plain}

\end{document}